\newtheorem{theorem}{Theorem}[section]
\newtheorem{corollary}[theorem]{Corollary}
\newtheorem{definition}[theorem]{Definition}
\newtheorem{example}[theorem]{Example}
\newtheorem{proposition}[theorem]{Proposition}
\newtheorem{remark}[theorem]{Remark}
\def\ball{{B}}
\def\tto{\rightrightarrows}
\def\bx{\bar x}
\def\by{\bar y}
\def\bz{\bar z}
\def\bv{\bar v}
\def\bw{\bar w}
\def\lip{\mathop{\rm lip}\nolimits}
\def\sur{\mathop{\rm sur}\nolimits}
\def\reg{\mathop{\rm reg}\nolimits}
\def\lip{\mathop{\rm lip}\nolimits}
\def\lopen{\mathop{\rm lopen}\nolimits}
\def\subreg{\mathop{\rm subreg}\nolimits}
\def\semireg{\mathop{\rm semireg}\nolimits}
\def\psopen{\mathop{\rm popen}\nolimits}
\def\calm{\mathop{\rm calm}\nolimits}
\def\recess{\mathop{\rm incalm}\nolimits}
\def\con{\mathop{\rm cone}\nolimits}
\def\intt{\mathop{\rm int}\nolimits}
\def\dom{\mathop{\rm dom}\nolimits}
\def\gph{\mathop{\rm gph}\nolimits}
\def\dist{\mathop{\rm dist}\nolimits}
\def\diam{\mathop{\rm diam}\nolimits}
\def\core{\mathop{\rm core}\nolimits}
\begin{document}

\subjclass{ 47J05  47J07  49J52 
49J53  58C15}


\title{On Almost Regular Mappings}



\author{Radek Cibulka} 
\address{NTIS - New Technologies for the Information Society and Department of Mathematics, Faculty of Applied Sciences, University of West Bohemia, Univerzitn\'i 8, 301 00 Plze\v{n}, Czech Republic}
\email{cibi@kma.zcu.cz}

 


%

\thanks{The author was supported by the grant of GA\v CR no. 20-11164L.}

%


\begin{abstract}
Ioffe's criterion and various reformulations of it  have become a~standard tool in proving theorems guaranteeing various regularity properties such as metric regularity, i.e., the openness with a linear rate around the reference point, of a~(set-valued) mapping. We derive an analogue of it guaranteeing the almost openness with a linear rate of mappings acting in incomplete spaces and having non-closed graphs, in general. The main tool used is an approximate version of Ekeland's variational principle for a function that is not necessarily lower semi-continuous and is defined on an abstract (possibly incomplete) space. Further, we focus on the stability of this property under additive single-valued and set-valued perturbations.  
\end{abstract}

\keywords{Ekeland variational principle, metric regularity, almost openness, Ioffe criterion, perturbation stability}

\maketitle

\section{Introduction} \label{scIntroduction}

Ekeland's variational principle (EVP)  plays a fundamental role in modern non-linear analysis and was generalized  by many authors in various ways.  Although our notation is fairly standard, in case of any difficulty, the reader is encouraged to consult the end of this introduction. 

\begin{theorem}\label{thmEkelandWeakMetric} Let $(X,d)$ be a complete metric space. Consider  a point  $x \in X$ and 
a~lower semi-continuous function $\varphi: X \to [0,\infty]$ such that $0 < \varphi(x) < \infty$. Then there exists a~point $u \in X$
such that  $\varphi(u) + d(u,x) \leq \varphi(x)$,  hence $d(u,x) \le \varphi(x)-\inf \varphi(X)$, and
\begin{equation} \label{eqEkelandWeak}
  \varphi(u') + d(u',u) \geq \varphi(u)  
    \quad \mbox{for every} \quad 
    u'\in X.
\end{equation}
\end{theorem}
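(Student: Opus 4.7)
The plan is to follow the classical Picard-type construction of Ekeland, adapted to give a single point $u$ satisfying all three conclusions simultaneously. Define on $X$ the preorder
\[
y \preceq z \quad :\Longleftrightarrow \quad \varphi(y) + d(y,z) \le \varphi(z),
\]
and for each $z \in X$ put $S(z) := \{y \in X : y \preceq z\}$. First I would record three easy facts: $S(z)$ is nonempty (it contains $z$); $S(z)$ is closed, because $\varphi$ is lower semi-continuous and $d(\cdot,z)$ is continuous; and $\preceq$ is transitive, so $y \preceq z$ implies $S(y) \subseteq S(z)$.

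Next I would inductively build a sequence $(x_n)$ with $x_0 = x$ by choosing
\[
x_{n+1} \in S(x_n) \quad \text{such that} \quad \varphi(x_{n+1}) \le \inf\bigl\{\varphi(y) : y \in S(x_n)\bigr\} + 2^{-n}.
\]
Because $x_{n+1} \preceq x_n$, we have $d(x_{n+1},x_n) \le \varphi(x_n) - \varphi(x_{n+1})$. Summing telescopes gives, for $m > n$,
\[
d(x_m,x_n) \le \sum_{k=n}^{m-1} \bigl(\varphi(x_k) - \varphi(x_{k+1})\bigr) = \varphi(x_n) - \varphi(x_m).
\]
Since $\varphi(x_n)$ is nonincreasing and bounded below by $0$, it converges, so $(x_n)$ is Cauchy. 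Completeness of $(X,d)$ yields $x_n \to u$ for some $u \in X$, and closedness of $S(x_0)$ (which contains every $x_n$ by transitivity) forces $u \in S(x) = S(x_0)$. This establishes $\varphi(u) + d(u,x) \le \varphi(x)$, and the distance bound $d(u,x) \le \varphi(x) - \inf \varphi(X)$ follows immediately by subtracting $\varphi(u) \ge \inf \varphi(X)$.

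The main obstacle, as usual, is to verify \eqref{eqEkelandWeak}. Fix an arbitrary $u' \in X$; if $\varphi(u') + d(u',u) > \varphi(u)$ the inequality is trivial, so assume $u' \preceq u$. By transitivity $u' \preceq u \preceq x_n$ for every $n$, so $u' \in S(x_n)$ and hence
\[
\varphi(u') \ge \varphi(x_{n+1}) - 2^{-n}.
\]
The nonincreasing sequence $\varphi(x_n)$ converges to some $\alpha$, and lower semi-continuity together with $x_n \to u$ gives $\alpha \ge \varphi(u)$; letting $n \to \infty$ above yields $\varphi(u') \ge \varphi(u)$. Combined with $\varphi(u') + d(u',u) \le \varphi(u)$, this forces $d(u',u) = 0$ and $\varphi(u') = \varphi(u)$, so in fact $\varphi(u') + d(u',u) = \varphi(u)$, and the required inequality holds. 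The only subtlety is ensuring $\varphi$ stays finite along the construction, which is automatic since $\varphi(x_n) \le \varphi(x) < \infty$ for every $n$.
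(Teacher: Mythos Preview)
Your argument is correct and follows the same classical iterative construction the paper uses: the paper derives Theorem~\ref{thmEkelandWeakMetric} from Theorem~\ref{thmEkelandWeak} via Remark~\ref{rmkLSC2}, and Theorem~\ref{thmEkelandWeak} in turn rests on the sequence produced in Theorem~\ref{thmApproxSequentialEkeland}, which is exactly your $(x_n)$ (with the cosmetic differences of a strict preorder and $1/k$ in place of $2^{-n}$). One small wording issue: when you write ``by transitivity $u'\preceq u\preceq x_n$ for every $n$'', transitivity alone does not give $u\preceq x_n$ from $u\preceq x_0$; you need the closedness of each $S(x_n)$ (which contains $x_m$ for all $m\ge n$) to conclude $u\in S(x_n)$, the same argument you already used for $S(x_0)$.
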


This formulation of the EVP, which is enough for applications in regularity theory, will be often referred to as a \emph{weak form} of the principle while the full version of it means that the inequality in \eqref{eqEkelandWeak} is strict provided that $u' \neq u$. We focus on the setting of the so-called extended quasi-metric spaces \cite{CR2020} which allows to cover and unify the results by S.~Cobza\c{s} in quasi-metric spaces \cite{Cobzas}; a~directional version of the principle in Banach spaces by  M.~Durea, M.~Pan\c{t}iruc, and R.~Strugariu  \cite{DPS2}; and a statement by L.P. Hai and P.Q. Khanh in partial metric spaces \cite{HK2020}. We emphasize that we leave aside many important works not having a tight connection with our investigation. In the second section, we establish an approximate version  of the EVP without  the assumption that the domain space is complete and that the function under consideration is lower semi-continuous. We also show that under these two additional assumptions our statement implies the (weak form of the) principles mentioned above. As noted in \cite{BMS2015} quasimetric (instead of metric) structure of the topological space of arguments is one of absolutely mandatory requirements for an appropriate extension of the EVP for its possible application to the psychological model considered therein.

Metric regularity, linear openness, and Aubin/pseudo-Lipschitz property of the inverse of a~given set-valued mapping $F: X \rightrightarrows Y$ from a metric space $(X,d)$ into another metric space $(Y, \varrho)$ are three \emph{equivalent} properties playing a~fundamental role in modern variational analysis \cite{AubinFrankowskaBook, BorweinZhuBook, DontchevBook, IoffeBook, MordukhovichBook, PenotBook}. Let us consider the problem:
\begin{equation} \label{eqInclusion}
  \mbox{Given } y \in Y  \mbox{ find an } x \in X \mbox{ such that } F(x) \ni y.  
\end{equation}
 We focus on a weaker version of the second mentioned property called \emph{almost openness with a linear rate} guaranteeing that there is a $u \in X$ such the distance $\dist(y, F(u))$, often called a \emph{residual}, is arbitrarily small and the inverse $F^{-1}$ of $F$ verifies a kind of Lipschitz continuity, that is, \eqref{eqInclusion} admits an approximate solution and the distance between $u$ and the solution set can be estimated by a~multiple of the residual.         In the third section, we define an abstract concept of the almost openness with a linear rate relative to a fixed set,  introduce the remaining two notions corresponding to metric regularity and a certain continuity property of the inverse, and prove their equivalence. In this way, we cover not only the approximate version of full metric regularity but also of two important and distinct weaker concepts called metric semiregularity and subregularity. In the spirit of \cite{IoffeBook}, we call them simply \emph{almost (semi/sub)regularity} while the triad(s) of stronger properties we call \emph{(semi/sub)regularity}.  

Ioffe's  criteria provide transparent proofs of regularity statements from the literature, e.g., see \cite{FP1987, IoffeBook, CF15}. Theorem~\ref{thmEkelandWeakMetric} implies the following example of such a statement, which can  be traced back to \cite{FP1987}:   
  
\begin{proposition}\label{propFabianPreiss} Let $(X,d)$ be a complete metric space and $(Y,\varrho)$ be a metric space. Consider  $c > 0$ and $r > 0$, a point $\bar x\in X$, and a continuous mapping $g: X\to Y$ defined on the whole $X$. Assume that for  each $u \in \ball_X(\bar{x},r)$ and each $y \in \ball_Y(g(\bar{x}),cr)$, with $0 < \varrho(g(u),y) < cr$, there is a point $u'\in X$ 
satisfying 
\begin{eqnarray}\label{eqBetter}
 c\,d(u,u') < \varrho(g(u),y)-\varrho(g(u'),y).
\end{eqnarray} 
Then $\ball_Y(g(x),ct) \cap \ball_Y(g(\bar{x}),cr) \subset g\big(\ball_X(x,t)\big)$ for each $x \in \ball_X(\bar{x}, r)$ and each $t  \in \big(0,r - d(x, \bar{x})\big)$.   
\end{proposition}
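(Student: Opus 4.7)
The plan is to apply the weak form of Ekeland's variational principle (Theorem~\ref{thmEkelandWeakMetric}) to the merit function $\varphi(u) := \varrho(g(u),y)$ for each fixed target $y$, with the rate $c$ absorbed by rescaling, and then to use assumption~\eqref{eqBetter} to exclude any limit point where $\varphi$ is positive.

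Fix $x \in \ball_X(\bar x,r)$ and $t \in (0, r - d(x,\bar x))$, and let $y \in \ball_Y(g(x),ct) \cap \ball_Y(g(\bar x),cr)$. If $g(x)=y$ we simply take $u=x$, so assume $\varrho(g(x),y) > 0$. Define $\varphi:X \to [0,\infty)$ by $\varphi(u) := \varrho(g(u),y)$; since $g$ is continuous, $\varphi$ is continuous on $X$, and $0 < \varphi(x) < ct \le cr$. Apply Theorem~\ref{thmEkelandWeakMetric} to the function $\varphi/c$ on the complete metric space $(X,d)$ at the point $x$: this yields some $u \in X$ with
\begin{equation*}
  \frac{\varphi(u)}{c} + d(u,x) \le \frac{\varphi(x)}{c}
  \quad\mbox{and}\quad
  \frac{\varphi(u')}{c} + d(u',u) \ge \frac{\varphi(u)}{c} \;\; \forall\, u' \in X.
\end{equation*}

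The first inequality gives $d(u,x) \le \varphi(x)/c < t$, so $u \in \ball_X(x,t)$, and by the triangle inequality $d(u,\bar x) < t + d(x,\bar x) < r$, so also $u \in \ball_X(\bar x, r)$. Moreover $\varphi(u) \le \varphi(x) < cr$.

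It remains to show $\varphi(u) = 0$, i.e. $g(u)=y$, which will finish the proof. Suppose, toward a contradiction, that $\varphi(u)>0$. Then $0 < \varrho(g(u),y) < cr$, the point $u$ lies in $\ball_X(\bar x,r)$, and $y$ lies in $\ball_Y(g(\bar x),cr)$, so the hypothesis of the proposition applies and furnishes $u' \in X$ with
\begin{equation*}
  c\, d(u,u') < \varrho(g(u),y) - \varrho(g(u'),y) = \varphi(u) - \varphi(u').
\end{equation*}
Dividing by $c$ and rearranging gives $\varphi(u')/c + d(u',u) < \varphi(u)/c$, contradicting the Ekeland minimization property above (note $u'\ne u$ is forced, since otherwise $0<0$). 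Hence $\varphi(u)=0$ and $g(u)=y$ with $u\in\ball_X(x,t)$, as required.

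The only real subtlety is bookkeeping: one must verify that the Ekeland point $u$ still lies inside $\ball_X(\bar x,r)$ (so that assumption~\eqref{eqBetter} is applicable at $u$) and that all inequalities stay strict (so that the contradiction between the strict inequality from~\eqref{eqBetter} and the non-strict Ekeland inequality is genuine); both come from the strict gap $t < r - d(x,\bar x)$ and $\varphi(x) < ct$.
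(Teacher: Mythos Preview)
Your proof is correct and follows precisely the approach the paper indicates: the paper states that Proposition~\ref{propFabianPreiss} is a consequence of Theorem~\ref{thmEkelandWeakMetric} (the weak Ekeland principle in complete metric spaces), and you carry this out cleanly by applying that theorem to $\varphi/c$ and using the strict inequality in~\eqref{eqBetter} to contradict the Ekeland minimality at any point with $\varphi(u)>0$. The bookkeeping you flag (that $u$ stays in $\ball_X(\bar x,r)$ and $\varphi(u)<cr$) is exactly what is needed and is handled correctly.
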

\noindent In the fourth section, we show that sufficient conditions for almost regularity of a~set-valued mapping $F$ require none of the following commonly used assumptions playing the role of completeness/continuity from the above statement:
\begin{itemize}
 \item[(S$_1$)] the graph of $F$ is (locally) complete;    
 \item[(S$_2$)] the domain space $X$ is complete and the graph of $F$ is (locally) closed.
\end{itemize}
We do not employ any additional tools such as lower semi-continuous envelopes, slope-based conditions, completion of the spaces and the graph of the mapping under consideration, etc., see \cite{CR2023}.  This essentially simplifies the proofs.  Moreover, if either (S$_1$) or (S$_2$) holds true then we obtain full regularity by using \cite[Proposition 2.40]{IoffeBook}.

In the fifth section, we investigate the stability of almost regularity around the reference point with respect to additive single-valued and set-valued perturbations. We make the presentation as close as possible to \cite{CR2023}. Despite the similarity to the case of full (semi/sub)regularity \cite{IoffeBook, CR2023}, the results presented are new and require neither (S$_1$) nor (S$_2$).  In particular, we cover (and slightly generalize) a global statement from \cite{ABB2020}.   

In the sixth section, we gather several probably well-known results on almost openness of linear and affine mappings scattered in the literature. These (simpler) mappings can be used to approximate the non-linear ones. Proposition~\ref{propAlmostOpen} seems to be new and it may be of the independent interest.     

In order to keep the consideration as clear as possible we do not state many results in the full generality. In the last section, we comment on possible extensions of the statements presented. Our work seem to provide a theoretic background for several application fields. For example, the (inexact non-smooth) Newton-type methods request to find an ``approximate'' solution to the linear (sub)problem at each step; hence almost (sub/semi)regularity seem to be able to replace the (sub/semi)regularity in the corresponding convergence results, cf. \cite[Theorem 5.1]{CFK2019}. In control theory, the constrained (approximate) controllability of a non-linear dynamic system can be deduced from the same property of the appropriate (linear) approximation of it. Roughly speaking, this corresponds to the statements guaranteeing the perturbation stability of (almost) openness applied to the reachability operators of the dynamic systems, which to a control assign the final state of the corresponding system, e.g. see \cite[Theorem 11]{C2011} along with comments and references therein.
All this goes beyond the scope of the current work and is left for the future consideration.  

\medskip 
\paragraph{\bf Notation and terminology.} The notation $a:=b$ or $b =:a$ meas that $a$ is defined by $b$.  We use the convention that $ \inf \emptyset: = \infty$; as we work with non-negative quantities that $\sup \emptyset: = 0$; and that $0 \cdot \infty = 1$. If a set is
a singleton we identify it with its only element, that is, we write $a$ instead of $\{ a \}$. 
A subset $C$ of a~vector space $X$ is said to be \emph{symmetric} if $-C=C$; \emph{circled} if $\lambda C \subset C$ for each
      $\lambda\in [-1,1]$;  \emph{absorbing} if for every $x\in X$ there is a $\delta > 0$ such that $\lambda x\in C$ for each $\lambda\in(-\delta,\delta)$. A convex set is circled if and only if
      it is symmetric.  The \emph{core} of $C$ and the \emph{cone generated by $C$} are the sets $ \core C := \{x\in C: \,C-x \mbox{ is absorbing}\}$   and $\mbox{\rm cone} \, C := \bigcup_{t \geq 0} tC$. 
  In a metric space $(X,d)$, by $\ball_X[x,r]$ and $\ball_X(x,r)$ we denote, respectively, the closed and the open ball centered at an $x\in X$ with a radius $r \in [0, \infty]$; in particular, $\ball_X(x,0) = \emptyset$, $\ball_X[x,0] = \{x\}$, and   $\ball_X[x,\infty] = \ball_X(x,\infty) = X$;  the \emph{distance from a point} $x \in X$ to a set $C \subset X$ is $\mbox{\rm dist} \, (x,C):= \inf \{d(x,u): \ u \in C \}$; the \emph{diameter} of $C$ is $\mbox{diam} \, C := \sup\{d(u,x): \ u, x \in C\}$; 
  and $\overline{C}$ denotes the \emph{closure} of $C$.
The \emph{closed unit ball} and the \emph{unit sphere} in a
normed space $X$ are denoted by $\ball_X$ and $\mathbb{S}_X$, respectively. 
The \emph{domain} of an extended real-valued function $\varphi: X \to \mathbb{R} \cup\{\infty\}$ is the set $ \dom \varphi := \{x \in X: \ \varphi(x) < \infty\}$. 

By  ${f: X \to Y}$ we denote a single-valued mapping  $f$ acting from a non-empty set $X$ into another non-empty set $Y$;
while  ${F:X \rightrightarrows Y}$ denotes a  mapping from $X$ into $Y$ which may be set-valued. The set $\mbox{\rm dom} \,  F:=\{ x \in X:\; F(x)\neq\emptyset\}$
is the {\it domain} of $F$, the  \emph{graph} of $F$ is the set $\mbox{\rm gph} \,  F: = \{
(x,y)\in X\times Y: \ y\in F(x)\}$, and the \emph{inverse} of $F$ is the mapping $Y \ni y \longmapsto \{x \in X : \ y\in F(x)\}=: F^{-1}(y) \subset X$;
thus $F^{-1} :Y\rightrightarrows X$. Given a subset $M$ of $X$, the \emph{image} of $M$ under $F$ is the set  $F(M) := \bigcup_{x\in M} F(x)$.
Let $(X,d)$ be a metric space, let $(Y, \varrho)$ be a linear metric space, let $U \times V \subset X \times Y$ be non-empty, and let  $(\bx,\by)\in\gph F$. The mapping $F$ is said to have  \emph{Aubin property on $U\times V$ with a constant $\ell>0$} if 
\begin{eqnarray*}
	F(u)\cap V\subset F(u')+ \ball_{{Y}}[0,\ell \, d(u,u')]\quad\text{for each}\quad u, u'\in U;
\end{eqnarray*}
 to have \emph{Aubin property around $(\bx,\by)$ with a constant $\ell>0$} if there is a neighborhood $U \times V$ of $(\bar{x}, \bar{y})$ in $X \times Y$, equipped with the product (box) topology,  such that $F$ has Aubin property on  $U\times V$ with the constant $\ell$; and to be \emph{Hausdorff-Lipschitz on $U$ with a constant $\ell > 0$}, if $F$ has Aubin property on $U\times Y$ with the constant $\ell$.  The \emph{Lipschitz modulus} of $F$ around $(\bar{x}, \by)$, denoted by $\lip F(\bar{x}, \by)$, is the infimum of the set of all $\ell > 0$  such that $F$ has Aubin property around $(\bx,\by)$ with the constant $\ell$. In particular, for a mapping $f:X \to Y$, defined in a vicinity of $\bar{x}$, we have $\bar{y} = f(\bar{x})$ and we simply write     $\lip f(\bar{x})$; for each $\ell >  \lip f(\bar{x})$, if there is any, $f$ is \emph{Lipschitz continuous around} $\bar{x}$ with the constant $\ell$, i.e., there is an $r > 0$ such that   
\begin{equation} \label{eqLip}
 \varrho(f(u),f(x)) \leq \ell \, d(u,x)  \quad \mbox{for each} \quad  u,x \in \ball_X(\bar{x},r).
\end{equation}

By $\mathcal{BL}(X,Y)$ we denote the (vector) space of all bounded linear operators $A$ from a 
normed space $(X, \|\cdot\|)$ into another normed space $(Y, \|\cdot\|)$; and  
$X^* := \mathcal{BL}(X,\mathbb{R})$.

\section{Variational principles of Ekeland type} \label{scEVP}

Since the terminology concerning the extended quasi-metric spaces is not unified we prefer to postulate several properties of a non-negative function $\eta$ defined on $X \times X$, where $X$ is a non-empty set; instead of giving particular names to the pair $(X, \eta)$ when $\eta$ obeys (some of) these properties.

\begin{definition}\label{defQuasiMetricSpaceAxioms} Consider a non-empty set $X$ and a function $\eta: X\times X\to[0,\infty]$. We say that $\eta$ satisfies assumption 
\begin{enumerate}
	\item[$(\mathcal{A}_1)$] provided that $\eta(x,x)=0$ for each $x\in X$;
	\item[$(\mathcal{A}_2)$] provided that  $\eta(u,x)\leq \eta(u,z)+\eta(z,x)$ whenever $u$, $x$, $z\in X$;
	\item[$(\mathcal{A}_3)$] provided that $\eta(u,x)>0$ whenever $u$, $x\in X$ are distinct;
	\item[$(\mathcal{A}_4)$]  provided that  for each  sequence $(u_k)_{k \in \mathbb{N}}$ in $X$ such that for each $\varepsilon>0$ there is an index $k_0=k_0(\varepsilon)$ such that for each $j$, $k \in\mathbb{N}$, with $k_0\leq k<j$, we have $\eta(u_j,u_k)<\varepsilon$; there is a point $u\in X$ such that $\eta(u,u_k) \to 0$ as $k\to \infty$.	
\end{enumerate}
The \emph{conjugate of} $\eta$ is the function ${\eta}^*: X \times X \to [0,\infty]$ defined by ${\eta}^*(x,u) := \eta(u,x)$, $x$, $u \in X$.   
\end{definition}

We will employ the following easy and well-known fact, cf. \cite{Cobzas, HK2020}.
\begin{remark} \label{rmkLSC1} \rm Let $X$ be a non-empty set and let $\eta: X\times X\to [0,\infty]$ satisfy $(\mathcal{A}_2)$. Consider points  $u$, $x \in X$ and  a sequence $(u_k)_{k \in \mathbb{N}}$ in $X$  such that $\eta(u,u_k) \to 0$ as $k \to \infty$. Then  
$
  \liminf_{k \to \infty} \eta (u_k, x) = \liminf_{k \to \infty}\big( \eta (u_k,x) + \eta(u, u_k) \big) \geq \eta (u,x) 
$
and 
$
  \limsup_{k \to \infty} \eta (x,u_k) \leq \lim_{k \to \infty}\big( \eta (x,u) + \eta(u, u_k) \big) = \eta (x,u) 
$. 
If, in addition, $\eta = \eta^*$, then the previous estimates imply that  $\eta (x,u) \geq \limsup_{k \to \infty} \eta (x,u_k) \geq \liminf_{k \to \infty} \eta (x,u_k) = \liminf_{k \to \infty} \eta (u_k, x) \geq \eta (u,x)=\eta (x,u)$; hence $\eta (x,u_k) \to \eta (x,u)$  as $k \to \infty$.
  
\end{remark}
 
 Consider a non-empty set $X$ and a function $\eta: X\times X\to [0,\infty]$ satisfying $(\mathcal{A}_1)$ and $(\mathcal{A}_2)$. Given $x\in X$ and $r>0$, we define sets
$$
\ball^\eta_X(x,r):=\lbrace u\in X : \eta(x,u)<r\rbrace     
\quad \text{and}\quad
\ball^\eta_X[x,r]:=\lbrace u\in X : \eta(x,u)\leq r\rbrace.
$$
We define the topology  $\tau_{\eta}$  on $X$ starting from the family $\mathcal{V}_{\eta}(x)$ of neighborhoods of an  arbitrary point $x\in X$:
\begin{eqnarray*}
	V\in \mathcal{V}_\eta(x) \quad \Longleftrightarrow \quad  V \supset \ball_{X}^{\eta}(x,r)  \quad \text{for some} \quad r>0.
\end{eqnarray*}
The convergence of a sequence $(x_k)_{k \in \mathbb{N}}$ to an $x \in X$ in $(X, \tau_{\eta})$  is characterized as follows:  $\lim_{k\to \infty} x_k=x$   
if and only if  $\eta(x,x_k)\to 0$ as $ k \to\infty$.
Note that  $\ball^\eta_X(x,r)$ is $\tau_{\eta}$-open and $\ball^\eta_X[x,r]$ is  $\tau_{\eta^*} \, $-closed but not necessarily $\tau_{\eta}$-closed. By Remark~\ref{rmkLSC1}, for each $x\in X$  the mapping $X\ni u\longmapsto \eta(u,x)$ is $\tau_\eta$-lower semi-continuous while the mapping $X\ni u\longmapsto \eta(x,u)$ is $\tau_\eta$-upper semi-continuous. 

Of course, any (positive multiple of a) metric $d$ on $X$ satisfies $(\mathcal{A}_1)-(\mathcal{A}_3)$; while $(\mathcal{A}_4)$ holds true when $(X, d)$ is a complete space. In this case, we omit the superscript in the definition of the ``balls'' above, that is, we write $\ball_X(x,r)$ and $\ball_X[x,r]$, respectively.   Let us present two less trivial examples  \cite{CDPS, DPS2, HK2020}. 
\begin{example} \label{exTL} \rm
 Let $(X, \| \cdot\|)$ be a Banach space. Given a non-empty set $L\subset \mathbb{S}_{X}$,   the \emph{directional minimal time function with respect to $L$} is the function
\begin{equation} 
X\times X\ni (x,u)\longmapsto T_{L}(x,u):=\inf\left\{  t\geq 0 : u-x\in t L\right\}.  \label{min_t_M}
\end{equation}
 Clearly, for each $u$, $x\in
X$, if $T_{L}(x,u)<\infty$ (which is equivalent to $u-x\in
\con L$), then
$$
T_{-L}(u,x)=T_{L}(x,u)=\Vert u-x\Vert. 
$$
Hence ${T_{L}} = (T_{-L})^*$ and $\ball^{T_L}_X(x,r) = \ball_X(x,r)\cap (x + \con L)$ for each $x \in X$ and $r > 0$. Clearly,  $T_{L}$ satisfies $(\mathcal{A}_1)$ and $(\mathcal{A}_3)$. 
If $\con L$ is convex, then $T_{L}$ also satisfies  $(\mathcal{A}_2)$. If $L$ is closed (thus so is $\con L$), then the function $T_L$ is lower semi-continuous and satisfies $(\mathcal{A}_4)$. 
\end{example} 

\begin{example} \label{exPM} \rm
 Let $X$ be a non-empty set. Assume that $\zeta: X \times X \to [0,\infty)$ is such that for each  $u$, $x$, $z \in X$ we have
 \begin{equation} \label{eqZeta}
  \zeta(x,x) \leq \zeta(x,u) 
  \quad \mbox{and} \quad 
  \zeta(x,u) \leq \zeta(x,z) + \zeta(z,u) - \zeta(z,z). 
 \end{equation}
 The above two conditions, called small self-distances and the triangle inequality, appear in the definition of a partial metric, e.g., see \cite[Definition 2.1]{HK2020}. If $\zeta$ satisfies the latter inequality in \eqref{eqZeta} then so does $\zeta^*$. In particular, both  $\zeta$ and  $\zeta^*$ satisfy $(\mathcal{A}_2)$. Assumption $(\mathcal{A}_4)$ holds for $\zeta$ provided that $X$ is \emph{$0$-complete} \cite[Definition 2.2]{HK2020}. 
   
  \noindent  Put $\eta(x,u) := \zeta(x,u) - \zeta(x,x)$, $u$, $x \in X$. Let $r > 0$ and $u$, $x$, $z \in X$ be arbitrary. Then  $\ball^{\eta}_X(x,r) = \{u \in X: \zeta (x,u) < \zeta (x,x) + r \}$; and 
\begin{eqnarray*}
 (0 \leq) \quad  \eta(x,u) & =  & \zeta(x,u) - \zeta(x,x) \leq \zeta(x,z) + \zeta(z,u) - \zeta(z,z) - \zeta(x,x) \\
 &  = & \eta(x,z) + \eta(z,u).
\end{eqnarray*}
Hence the non-negative function $\eta$ satisfies  $(\mathcal{A}_2)$; and, of course,  also $(\mathcal{A}_1)$. Given a sequence $(u_k)_{k \in \mathbb{N}}$ in $X$, we have $\lim_{k \to \infty} \eta (u,u_k) = 0$ if and only if $\lim_{k \to \infty} \zeta(u, u_k) = \zeta (u,u)$.

Define the topology  $\tau_{\zeta}$  on $X$ starting from the family $\mathcal{V}_{\zeta}(x)$ of neighborhoods of an  arbitrary point $x\in X$:
\begin{eqnarray*}
	V\in \mathcal{V}_\zeta(x) \ \Leftrightarrow \  V \supset \ball_X^\eta(x,r) = \{u \in X: \ \zeta(x,u) < r + \zeta(x,x)\} \ \text{for some} \ r>0.
\end{eqnarray*}
Then the convergence of a sequence $(u_k)_{k \in \mathbb{N}}$ to a $u$ in $(X, \tau_{\zeta})$  is characterized as follows:  $\lim_{k\to \infty} u_k=u$   
if and only if  $\lim_{k \to \infty} \zeta(u, u_k) = \zeta (u,u)$.  
\end{example} 

Let us present the main result in this section. 

\begin{theorem}\label{thmApproxSequentialEkeland}  Let $X$ be a non-empty set such that there is a function $\eta:~X\times X \to [0,\infty]$ satisfying  $(\mathcal{A}_2)$.  Consider  a point $x \in X$ and a function $\varphi: X \to [0,\infty]$ such that $0 < \varphi(x) < \infty$. Then there exist an $N \in \mathbb{N}\cup \{\infty\}$ and points  $(u_k)_{k = 1}^{N}$ in $X$ such that $u_1 = x$; that 
\begin{itemize}
  \item[$(i)$]   $\varphi(u_{j}) + \eta(u_{j},u_k) <\varphi(u_k)$ for each $j$, $k \in \mathbb{N}$ with $1 \leq k < j \leq N$; and 
  \item[$(ii)$] for  each $\varepsilon \in \big(0, \varphi(x) \big)$ there is an index  $n = n(\varepsilon) \leq N$ such that 
$$
 \varphi(u') + \eta(u',u_{k}) > \varphi(u_{k}) - \varepsilon 
 \quad \mbox{whenever} \quad u'\in X \mbox{ and }  k \in \mathbb{N} \mbox{ with } N \geq k \geq n.
$$
\end{itemize}
\end{theorem}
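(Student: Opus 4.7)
The plan is a greedy Ekeland-type iteration which, because of the absence of completeness or lower semi-continuity, relies on controlling a geometrically decreasing ``gap'' rather than on passing to a limit. I would set $u_1:=x$, and at the $k$-th step consider
\[
\mu_k:=\inf_{v\in X}\{\varphi(v)+\eta(v,u_k)\}\in[0,\infty].
\]
If $\mu_k\geq\varphi(u_k)$, I terminate and set $N:=k$; then for every $u'\in X$ one has $\varphi(u')+\eta(u',u_k)\geq \mu_k\geq \varphi(u_k) > \varphi(u_k)-\varepsilon$, so (ii) holds trivially with $n(\varepsilon):=N$. Otherwise $\mu_k<\varphi(u_k)<\infty$ and, using the definition of infimum, I select $u_{k+1}\in X$ with
\[
\varphi(u_{k+1})+\eta(u_{k+1},u_k)<\tfrac{1}{2}\bigl(\mu_k+\varphi(u_k)\bigr).
\]
In particular $\varphi(u_{k+1})<\varphi(u_k)<\infty$, so the induction proceeds.

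Verification of (i) is routine. For consecutive indices the construction gives $\varphi(u_{k+1})+\eta(u_{k+1},u_k)<\varphi(u_k)$, and for $k<k+1<j$ the triangle inequality $(\mathcal{A}_2)$ together with the inductive hypothesis yields
\[
\varphi(u_j)+\eta(u_j,u_k)\leq \varphi(u_j)+\eta(u_j,u_{j-1})+\eta(u_{j-1},u_k)<\varphi(u_{j-1})+\eta(u_{j-1},u_k)<\varphi(u_k),
\]
so (i) propagates along the sequence regardless of whether $N$ is finite or infinite.

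The crux, and the main obstacle without completeness, is (ii) in the case $N=\infty$. A second use of $(\mathcal{A}_2)$ gives $\varphi(v)+\eta(v,u_k)\leq \varphi(v)+\eta(v,u_{k+1})+\eta(u_{k+1},u_k)$ for every $v$, whence $\mu_{k+1}\geq \mu_k-\eta(u_{k+1},u_k)$. Combining this with the strict bound $\eta(u_{k+1},u_k)<\tfrac{1}{2}(\mu_k+\varphi(u_k))-\varphi(u_{k+1})$ built into the construction, and writing $\gamma_k:=\varphi(u_k)-\mu_k\geq 0$, I derive the geometric contraction
\[
\gamma_{k+1}<\tfrac{1}{2}\gamma_k,\qquad\text{hence}\qquad \gamma_k<\varphi(x)/2^{k-1}\longrightarrow 0.
\]
Given $\varepsilon\in(0,\varphi(x))$, choosing $n(\varepsilon)$ so that $\varphi(x)/2^{n(\varepsilon)-1}<\varepsilon$ then forces $\varphi(u')+\eta(u',u_k)\geq \mu_k>\varphi(u_k)-\varepsilon$ for every $u'\in X$ and every $k$ with $n(\varepsilon)\leq k\leq N$. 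Observe that neither $(\mathcal{A}_1)$, $(\mathcal{A}_3)$, $(\mathcal{A}_4)$, nor any semi-continuity of $\varphi$ enters the argument: the geometric contraction of $\gamma_k$ replaces the classical appeal to convergence of the greedy sequence, and this is precisely what allows the statement to survive in the abstract and possibly incomplete setting.
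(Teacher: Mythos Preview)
Your argument is correct. Both proofs proceed by a greedy Ekeland-type iteration, but the choices differ in a way worth noting. The paper tracks
\[
\alpha_k:=\inf\{\varphi(u'):\ \varphi(u')+\eta(u',u_k)<\varphi(u_k)\},
\]
selects $u_{k+1}$ with $\varphi(u_{k+1})<\alpha_k+1/k$, and for (ii) in the case $N=\infty$ exploits that $(\varphi(u_k))$ is a Cauchy sequence in $\mathbb{R}$, arguing by contradiction that a violating $u'$ at some $k\geq n$ would force $\varphi(u')<\alpha_n\leq\varphi(u')$. You instead work with $\mu_k=\inf_v\{\varphi(v)+\eta(v,u_k)\}$, choose $u_{k+1}$ by halving the gap $\gamma_k=\varphi(u_k)-\mu_k$, and obtain (ii) directly from the geometric contraction $\gamma_{k+1}<\tfrac12\gamma_k$. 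Your route is somewhat cleaner: it avoids the contradiction step, yields an explicit rate $\gamma_k<\varphi(x)/2^{k-1}$ (so $n(\varepsilon)$ can be chosen explicitly), and the key estimate $\mu_{k+1}\geq\mu_k-\eta(u_{k+1},u_k)$ follows immediately from a single application of $(\mathcal{A}_2)$. The paper's choice of $\alpha_k$ is closer to the classical Ekeland scheme and makes the termination criterion read as ``no strictly better point exists,'' which is perhaps conceptually more transparent; but for the purposes of this theorem the two constructions are interchangeable.
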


\begin{proof} 	We follow a standard pattern and construct inductively appropriate points  $u_1,u_2,\dots$ in $\dom \varphi$. Let $u_1:=x$. Assume that $u_k\in \dom \varphi$ is already defined for an index $k\in \mathbb{N}$.  Let 
	$$
	\alpha_k:=\inf\lbrace \varphi(u'): \ u'\in X \quad\text{and}\quad \varphi(u')+\eta(u', u_k) < \varphi(u_k) \rbrace
	\quad \quad \quad (\geq 0).
	$$
    If $\alpha_k = \infty$, that is, $\varphi(u')+\eta(u', u_k) \geq \varphi(u_k)$ for each $u' \in X$; then stop the construction.  
    Otherwise,  find a point $u_{k+1}\in X$ such that
	\begin{eqnarray}
	\label{ApproxEkelEstimate1}
	\varphi(u_{k+1})+\eta(u_{k+1}, u_{k}) < \varphi(u_k)\quad\text{and}\quad \varphi(u_{k+1})<\alpha_k+ 1/k.
	\end{eqnarray}
	Note that  $\alpha_k \leq \varphi(u_{k+1})\leq \varphi(u_{k+1})+  \eta(u_{k+1},u_k) < \varphi(u_k)< \infty$. Hence $u_{k+1} \in \dom \varphi$  and we repeat the construction. 
	
	If the set of all iterates $u_k$ generated by the above process is finite, then let $N$ be the cardinality of this set; otherwise put $N := \infty$. 
	 By  $(\mathcal{A}_2)$, for all $j$, $k \in \mathbb{N}$, with $1\leq k <j \leq N$, we have that 
	\begin{eqnarray}
	0 \leq \eta(u_j,u_k) &\leq & \eta(u_{j},u_{j-1})+\dots + \eta(u_{k+1},u_{k})\nonumber\\
	&<& \big( \varphi(u_{j-1})-\varphi(u_{j})\big)+\cdots + \big( \varphi(u_{k})-\varphi(u_{k+1})\big) = \varphi(u_k)-\varphi(u_j).	\label{ApproxEkelEstimate2}
	\end{eqnarray}
	This proves (i).
	
	Let $\varepsilon \in \big(0, \varphi(x) \big)$ be arbitrary. If $N < \infty$, then put $n := N$ and we are done, because the iterative process stopped after a finite number of steps, that is, $\varphi(u')+\eta(u', u_N) \geq \varphi(u_N)$ for each $u' \in X$. Further assume that $N = \infty$.    The first inequality in \eqref{ApproxEkelEstimate1} implies that a (bounded from below) sequence $\big( \varphi(u_k)\big)$ is strictly decreasing, so it converges. Find an index $n=n(\varepsilon) > 2/\varepsilon$ such that $ 0 <  \varphi(u_{n})-\varphi(u_{n+1})  < \varepsilon/2$.
    Assume, on the contrary, that there are $k \geq n$ and $u'\in X$ such that 
	$$
	  \varphi(u')+ \eta(u',u_{k})\leq \varphi(u_{k}) - \varepsilon.
	$$
    If $k = n$, then $\varphi(u')+ \eta(u',u_{n}) <  \varphi(u_{n})$ trivially. If $k > n$, then $(\mathcal{A}_2)$, the last displayed estimate, and \eqref{ApproxEkelEstimate2}, with $j:= k$ and $k := n$, imply that 
    \begin{eqnarray*}
      \varphi(u')+ \eta(u',u_{n}) & \leq &  \varphi(u') + \eta(u',u_{k}) + \eta(u_{k}, u_{n})  \\
      & < &  \varphi(u_{k}) - \varepsilon + \varphi(u_{n})-\varphi(u_{k})  =  \varphi(u_{n}) - \varepsilon \quad \quad \big(<\varphi(u_{n})\big).
    \end{eqnarray*}
     Hence   $\alpha_{n} \leq \varphi(u')$. The latter inequality in \eqref{ApproxEkelEstimate1} and the choice of $n$ yield that 
	\begin{eqnarray*}
	\varphi(u') & \leq &  \varphi(u')+ \eta (u',u_{n}) <   \varphi(u_{n}) - \varepsilon < \varphi(u_{n+1}) - \varepsilon/2 < \varphi(u_{n+1})-1/n \\
	&  <  &  \alpha_{n} \leq \varphi(u'),
	\end{eqnarray*}
	a contradiction.
\end{proof}

As an easy corollary we get:
\begin{corollary}\label{corApproxEkeland}  Let $X$ be a non-empty set such that there is a function $\eta:~X\times X \to [0,\infty]$ satisfying $(\mathcal{A}_2)$.  Consider an $\varepsilon > 0$,  a point $x \in X$, and  a function $\varphi: X \to [0,\infty]$ such that $\eta(x,x) < \infty$ and $\varepsilon < \varphi(x) < \infty$. Then there exists a~point $u  \in X$
such that  $\varphi(u) + \eta(u,x) \leq \varphi(x) + \eta(x,x)$ and
\begin{equation} \label{eqAppoxEkeland02}
 \varphi(u') + \eta(u',u) > \varphi(u) - \varepsilon 
 \quad \mbox{for every} \quad 
  u'\in X.
\end{equation}
\end{corollary}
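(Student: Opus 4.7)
The corollary should follow almost directly from Theorem~\ref{thmApproxSequentialEkeland} by selecting a single element from the sequence produced there. My plan is to apply the theorem with the same $\varphi$ and $x$ (the hypothesis $0 < \varphi(x) < \infty$ matches since $\varepsilon > 0$ and $\varepsilon < \varphi(x) < \infty$), obtain a sequence $(u_k)_{k=1}^{N}$ with $u_1 = x$ satisfying (i) and (ii), and then set $u := u_n$, where $n = n(\varepsilon) \leq N$ is the index supplied by (ii) for this particular $\varepsilon$. Note that $\varepsilon \in (0,\varphi(x))$ is indeed in the range required by (ii).

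The second inequality in the conclusion of the corollary, namely \eqref{eqAppoxEkeland02}, is then immediate: taking $k := n$ in (ii) yields $\varphi(u') + \eta(u', u_n) > \varphi(u_n) - \varepsilon$ for every $u' \in X$, which is exactly what is required with $u = u_n$.

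For the first inequality $\varphi(u) + \eta(u,x) \leq \varphi(x) + \eta(x,x)$, I would split into two cases. If $n = 1$, then $u = u_1 = x$ and the inequality reduces to $\varphi(x) + \eta(x,x) \leq \varphi(x) + \eta(x,x)$, which holds trivially (here the finiteness assumption $\eta(x,x) < \infty$ ensures the inequality is meaningful rather than a vacuous $\infty \leq \infty$). If $n > 1$, I apply (i) with $j := n$ and $k := 1$ to obtain
\[
 \varphi(u_n) + \eta(u_n, u_1) < \varphi(u_1),
\]
that is, $\varphi(u) + \eta(u,x) < \varphi(x) \leq \varphi(x) + \eta(x,x)$, where the last step uses non-negativity of $\eta$.

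There is no real obstacle: the theorem was designed precisely to hand us such a point, and the only minor subtlety is keeping track of the case $n=1$ (where one needs to invoke the finiteness of $\eta(x,x)$ rather than property (i), since (i) produces a bound only for indices strictly greater than $k$).
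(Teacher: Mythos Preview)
Your proof is correct and follows essentially the same approach as the paper's: apply Theorem~\ref{thmApproxSequentialEkeland}, pick the index furnished by (ii), and use (i) (or the trivial case $u=x$) to get the first inequality. The only cosmetic difference is that the paper splits into the cases $N=1$ versus $N>1$ and in the latter case selects $u:=u_{\max\{2,n\}}$ rather than $u:=u_n$, but this yields the same conclusion.
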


\begin{proof}
  Use Theorem~\ref{thmApproxSequentialEkeland} to find an $N \in \mathbb{N} \cup \{\infty\}$, points  $(u_k)_{k =1}^{N}$ with  $u_1 = x$, and an index $n = n(\varepsilon)$ such that the conclusions of both (i) and (ii)  hold true. If $N = 1$ then $u:=u_1=x$ does the job. If $N > 1$, then put $u := u_{m}$, where $m := \max\{2,n\}$. Then (ii), with $k := m$, implies \eqref{eqAppoxEkeland02}. By (i),  with $k := 1$ and $j:=m$, we infer that   $\varphi(u) + \eta(u,x) < \varphi(x) \leq \varphi(x) + \eta(x,x)$.
\end{proof}

Next statement slightly improves a weak form of the EVP  from \cite{CR2020} covering results in \cite{Cobzas, DPS2}.

\begin{theorem}\label{thmEkelandWeak} Let $X$ be a non-empty set such that there is a function $\eta:~X\times X\to [0,\infty]$ satisfying  $(\mathcal{A}_2)$ and $(\mathcal{A}_4)$.  Consider a point $x \in X$ and 
 a function $\varphi: X \to [0,\infty]$ such that $\eta(x,x) < \infty$ and $0 < \varphi(x) < \infty$. Assume that for each $u \in X$ and each $(u_k)_{k \in \mathbb{N}}$  in $X$ such that  $(\varphi(u_{k}))_{k \in \mathbb{N}}$ is strictly decreasing and $\eta(u,u_k) \to 0$ as $k \to \infty$ we have that  
 \begin{equation} \label{eqSeqStricLSC}
    \varphi(u) \leq \lim_{k \to \infty} \varphi(u_k) 
    \quad \quad \quad \quad  (\mbox{yes, the limit exists}).
 \end{equation}
  Then there exists a~point $u \in X$
such that  $\varphi(u) + \eta(u,x) \leq \varphi(x) + \eta(x,x)$ and
\begin{equation} \label{eqEVPWeak02}
 \varphi(u') + \eta(u',u) \geq \varphi(u) 
 \quad \mbox{for every} \quad 
  u'\in X.
\end{equation}
\end{theorem}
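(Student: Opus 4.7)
The plan is to apply Theorem~\ref{thmApproxSequentialEkeland} and upgrade its approximate conclusion to an exact one by passing to the limit of the iteration sequence it produces. First I would invoke that theorem to obtain an $N \in \mathbb{N}\cup\{\infty\}$ and points $(u_k)_{k=1}^N$ with $u_1 = x$ satisfying conclusions (i) and (ii).

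If $N < \infty$, the construction must have halted because no improving $u'$ exists, so $u := u_N$ automatically satisfies \eqref{eqEVPWeak02}; the first estimate $\varphi(u)+\eta(u,x) \le \varphi(x)+\eta(x,x)$ then comes from (i) applied with $k=1$, $j=N$ (and trivially when $N=1$, using $\eta(x,x)\ge 0$).

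The substantive case is $N=\infty$. Since $(\varphi(u_k))$ is strictly decreasing by (i) and bounded below by zero, it converges to some $\ell \ge 0$, and (i) immediately yields $\eta(u_j,u_k) < \varphi(u_k)-\varphi(u_j)$, which is precisely the Cauchy-type hypothesis of $(\mathcal{A}_4)$. This produces a point $u \in X$ with $\eta(u,u_k) \to 0$, and since $(\varphi(u_k))$ is strictly decreasing the hypothesis \eqref{eqSeqStricLSC} then forces $\varphi(u) \le \ell$. For the first conclusion I would couple this with (i) at $k=1$, namely $\eta(u_j,x) < \varphi(x) - \varphi(u_j)$, and the lower semi-continuity of $\eta(\cdot,x)$ recorded in Remark~\ref{rmkLSC1}, to conclude $\eta(u,x) \le \liminf_j \eta(u_j,x) \le \varphi(x) - \ell$, hence $\varphi(u)+\eta(u,x) \le \varphi(x) \le \varphi(x)+\eta(x,x)$.

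For \eqref{eqEVPWeak02}, fix $u' \in X$ and $\varepsilon \in (0,\varphi(x))$; by (ii), $\varphi(u') + \eta(u',u_k) > \varphi(u_k) - \varepsilon$ for all sufficiently large $k$, and the upper semi-continuity of $\eta(u',\cdot)$ at $u$ from Remark~\ref{rmkLSC1} yields $\limsup_k \eta(u',u_k) \le \eta(u',u)$. Taking $\limsup$ on both sides gives $\varphi(u')+\eta(u',u) \ge \ell - \varepsilon \ge \varphi(u) - \varepsilon$, and letting $\varepsilon \downarrow 0$ closes the argument. The only real obstacle is keeping the asymmetry of $\eta$ straight: the first inequality needs \emph{lower} semi-continuity of $\eta(\cdot,x)$, while the variational inequality needs \emph{upper} semi-continuity of $\eta(u',\cdot)$, and it is essential that Remark~\ref{rmkLSC1} supplies both under the single convergence $\eta(u,u_k)\to 0$ produced by $(\mathcal{A}_4)$.
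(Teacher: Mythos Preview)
Your proposal is correct and follows essentially the same route as the paper's proof: apply Theorem~\ref{thmApproxSequentialEkeland}, handle the terminating case $N<\infty$ directly via the halting condition (the paper uses (ii) with $\varepsilon\downarrow 0$, which amounts to the same thing), and in the case $N=\infty$ pass to the limit using $(\mathcal{A}_4)$, the strict-decreasing lower semi-continuity hypothesis, and the two semi-continuity facts from Remark~\ref{rmkLSC1}. The only cosmetic difference is that for \eqref{eqEVPWeak02} the paper writes the estimate via $(\mathcal{A}_2)$ explicitly ($\eta(u',u)+\eta(u,u_k)\geq\eta(u',u_k)$) rather than quoting the upper semi-continuity of $\eta(u',\cdot)$ from Remark~\ref{rmkLSC1}, but these are the same computation.
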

 
\begin{proof}
   Using Theorem~\ref{thmApproxSequentialEkeland}, we find an $N \in \mathbb{N} \cup \{\infty\}$ and points $(u_k)_{k =1}^{N}$ such that $u_1 = x$ and (i) holds true. 
  
  First, assume that $N = \infty$.  In view of (i), the sequence $\big(\varphi(u_k)\big)$ is strictly decreasing (and bounded from below); hence it converges.  As $\big(\varphi(u_k)\big)$  is a Cauchy sequence, combining (i) and $(\mathcal{A}_4)$, we find a $u \in X$ such that $\lim_{k \to \infty} \eta(u,u_k) =0$. As  $u_1 = x$, employing \eqref{eqSeqStricLSC}, Remark~\ref{rmkLSC1},   and (i) with $k:=1$, we get 
  \begin{eqnarray*} 
   \varphi(u) + \eta(u,x) & \leq & \lim_{j \to \infty} \varphi(u_{j}) + \liminf_{j \to \infty}  \eta(u_{j},x) \notag \\
   & = & \liminf_{j \to \infty} \big(\varphi(u_{j}) + \eta(u_{j},x)\big) 
    \leq  \varphi(x)\leq \varphi(x) + \eta(x,x).
  \end{eqnarray*}
     To prove the latter inequality, let $u' \in X$ be arbitrary. Pick any $\varepsilon > 0$. Find an index $n = n(\varepsilon)$ such that the conclusion in (ii) holds true.  Then
   \begin{eqnarray*} 
    \varphi(u') + \eta(u',u)  & = &  \lim_{k \to \infty} \big(\varphi(u') + \eta(u',u) + \eta(u,u_{k})  \big)  \\
    &   \overset{(\mathcal{A}_2)}{\geq} &  \liminf_{k \to \infty} \big(\varphi(u') + \eta(u',u_{k}) \big)   \overset{(ii)}{\geq}  \lim_{k \to \infty}  \varphi(u_{k}) - \varepsilon 
     \overset{\eqref{eqSeqStricLSC}}{\geq}   \varphi(u) - \varepsilon. 
  \end{eqnarray*}
  As $\varepsilon > 0$ is arbitrary, letting  $\varepsilon \downarrow 0$, we get that $\varphi(u') + \eta(u',u) \geq  \varphi(u)$. 
  
  Second, assume that $ N < \infty$. Put $u := u_{N}$. If $N > 1$ then (i), with $k:=1$ and $j := N$, implies that $\varphi(u) + \eta(u,x) < \varphi(x) \leq \varphi(x) + \eta(x,x)$. If $N = 1$ then the first desired inequality holds trivially because  $u=u_1 = x$. By (ii),  we have $\varphi(u') + \eta(u',u) > \varphi(u) - \varepsilon$ for each $\varepsilon$ small enough, which yields the second inequality as in case $N = \infty$. 
\end{proof}

\begin{remark} \label{rmkLSC2} \rm Of course, Theorem~\ref{thmEkelandWeakMetric} follows from  the last result. More generally, let $X$ be a non-empty set and let $\eta: X\times X\to [0,\infty]$ satisfy  $(\mathcal{A}_1)$ and $(\mathcal{A}_2)$. Assume that $\varphi: X \to [0, \infty]$  a~$\tau_\eta$-lower semi-continuous function. Then the assumption containing \eqref{eqSeqStricLSC}, called strict-decreasing-lower semi-continuity in \cite{HK2020}, holds true; see \cite[Example 2.9]{HK2020} illustrating that this notion is weaker than the usual lower semi-continuity in the setting from Example~\ref{exPM}.     
\end{remark}

Note that the last theorem  is equivalent to the most popular form of the EVP involving two constants and three conclusions (and the preceding results can be modified accordingly).  

\begin{theorem}\label{thmEkelandWeakE}
Let $X$ be a non-empty set such that there is a function $\eta:~X\times X\to [0,\infty]$ satisfying  $(\mathcal{A}_2)$ and $(\mathcal{A}_4)$.  Consider $\delta >0$ and $r>0$, a~point $x \in X$, and 
 a function $\varphi: X \to (\infty,\infty]$ such that  $\eta(x,x) < \infty$ and $-\infty <  \inf \varphi(X)  < \varphi(x) \leq \inf \varphi(X)  +  \delta$. Assume that for each $u \in X$ and each $(u_k)_{k \in \mathbb{N}}$  in $X$ such that  $(\varphi(u_{k}))_{k \in \mathbb{N}}$ is strictly decreasing and $\eta(u,u_k) \to 0$ as $k \to \infty$ inequality \eqref{eqSeqStricLSC}  holds true.   
Then there is a point $u \in X$ such that
\begin{enumerate}
 \item[$(a)$] $\varphi(u) + \hbox{$\frac{\delta}{r}$} \, \eta(u,x) \leq \varphi(x) + \hbox{$\frac{\delta}{r}$} \, \eta(x,x)$;
 \item[$(b)$] $\varphi(u') +\hbox{$\frac{\delta}{r}$} \, \eta(u',u) \geq \varphi(u)$ for every  $u'\in X$; and
 \item[$(c)$] $\eta(u,x) \leq r + \eta (x,x)$.
 \end{enumerate}
\end{theorem}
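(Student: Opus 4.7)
The plan is to reduce Theorem~\ref{thmEkelandWeakE} to the already established Theorem~\ref{thmEkelandWeak} by a standard shift-and-rescale of the objective function. Since $c := \inf\varphi(X)$ is finite and strictly below $\varphi(x)$, I set $\psi : X \to [0,\infty]$ by
\[
 \psi(u) := \tfrac{r}{\delta}\bigl(\varphi(u) - c\bigr), \qquad u \in X.
\]
Shifting by $-c$ moves the infimum to $0$, and the positive factor $r/\delta$ is chosen precisely so that, after undoing the change of variable, the perturbation $\eta$ will appear with weight $\delta/r$ as required in conclusions $(a)$ and $(b)$.

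First, I would check that $\psi$ satisfies the hypotheses of Theorem~\ref{thmEkelandWeak}: $\psi \ge 0$, $0 < \psi(x) = \tfrac{r}{\delta}(\varphi(x)-c) \leq r < \infty$ by the bound $\varphi(x) \leq c + \delta$, and the assumption $\eta(x,x) < \infty$ is inherited unchanged. The sequential strict-decreasing-lower-semi-continuity condition \eqref{eqSeqStricLSC} is preserved because multiplication by the positive constant $r/\delta$ and the additive shift by $-c$ do not affect either the strict monotonicity of $(\psi(u_k))_{k \in \mathbb{N}}$ or the direction of the limiting inequality.

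Then Theorem~\ref{thmEkelandWeak} supplies a point $u \in X$ with
\[
 \psi(u)+\eta(u,x) \leq \psi(x)+\eta(x,x) \qquad\text{and}\qquad \psi(u')+\eta(u',u) \geq \psi(u) \ \text{for every } u'\in X.
\]
Multiplying each inequality by $\delta/r$ and substituting back $\varphi = \tfrac{\delta}{r}\psi + c$, I obtain conclusion $(a)$ from the first (the constant $c$ cancels on both sides) and conclusion $(b)$ from the second. For $(c)$, I would combine $(a)$ with the bounds $\varphi(u) \geq c$ and $\varphi(x) \leq c + \delta$ to get
\[
 \tfrac{\delta}{r}\eta(u,x) \leq \varphi(x) - \varphi(u) + \tfrac{\delta}{r}\eta(x,x) \leq \delta + \tfrac{\delta}{r}\eta(x,x),
\]
and multiplying through by $r/\delta$ yields $\eta(u,x) \leq r + \eta(x,x)$.

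There is no genuine obstacle; the only thing to watch is the bookkeeping of the self-distance term $\eta(x,x)$, which need not vanish in this abstract setting, so it persists as an extra summand in both $(a)$ and $(c)$, exactly mirroring its appearance in Theorem~\ref{thmEkelandWeak} before the rescaling is undone.
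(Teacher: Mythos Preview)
Your proposal is correct and follows essentially the same route as the paper: reduce to Theorem~\ref{thmEkelandWeak} via a shift by $-\inf\varphi(X)$ and a positive rescaling, then read off $(a)$ and $(b)$ directly and deduce $(c)$ from $(a)$ together with $\varphi(u)\ge\inf\varphi(X)$ and $\varphi(x)\le\inf\varphi(X)+\delta$. The only cosmetic difference is that the paper rescales $\eta$ (replacing it by $\tfrac{\delta}{r}\eta$) while you rescale $\varphi$ (replacing it by $\tfrac{r}{\delta}(\varphi-c)$); either choice works and the verification that the hypotheses of Theorem~\ref{thmEkelandWeak} transfer is immediate in both cases.
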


\begin{proof} 
Applying Theorem~\ref{thmEkelandWeak}, with $\varphi := \varphi - \inf{\varphi(X)}$ and $\eta := \hbox{$\frac{\delta}{r}$}  \eta$, and adding  $\inf{\varphi(X)}$, we find a point $u \in X$ such that  $ \varphi(u) + \hbox{$\frac{\delta}{r}$} \, \eta(u,x) \leq \varphi(x) + \hbox{$\frac{\delta}{r}$} \, \eta(x,x)$ and
$ \varphi(u') +\hbox{$\frac{\delta}{r}$} \, \eta(u',u) \geq \varphi(u)$ for every  $ u'\in X$.
Hence both (a) and (b) hold true. By (a), we get that $\delta \, \big(\eta(u,{x}) - \eta(x,x)\big) \leq r \big(\varphi(x) - \varphi(u)\big) \leq r \big(\varphi({x}) - \inf{\varphi(X)} \big)  \leq r\delta $, that is, $\eta(u,x) \leq r + \eta (x,x)$.
\end{proof}

To conclude this section, let us comment on several existing results in the literature in connection with Theorems \ref{thmEkelandWeak} and \ref{thmEkelandWeakE}. 

\begin{remark} \label{rmkComentarySC2} \rm
 Choosing $\eta: X\times X\to [0,\infty]$ verifying $(\mathcal{A}_1)-(\mathcal{A}_4)$, we get weak forms of Theorem 2.4 and Corollary 2.8 in \cite{CR2020}; hence, using the latter statement,  of \cite[Theorem 2.1]{BKP2005} as well. In particular, we get weak forms of \cite[Corollary 3.2]{DPS2}, where  $\eta := T_L$ from Example~\ref{exTL}; and of \cite[Theorem 2.4]{Cobzas} if $\eta$ is a (finitely-valued) quasi-semimetric (instead of a quasi-metric generating T$_1$-topology used therein). 
\end{remark}
 
 \begin{remark} \label{rmkComentarySC3} \rm
 Let $\zeta$ and  $\eta$ be as in Example~\ref{exPM}. Then  $\zeta$ satisfies  $(\mathcal{A}_2)$ and if, in addition, $\zeta = \zeta^*$ and $X$ is $0$-complete, then  Theorem~\ref{thmEkelandWeakE}, with $\eta:= \zeta$,  yields  \cite[Theorem 4.1]{HK2020}. Indeed, since $\zeta(u,x)=\zeta(x,u)$, from (a)  and \eqref{eqZeta} we get that $\varphi(u) \leq   \varphi(x) + \hbox{$\frac{\delta}{r}$} (\zeta(x,x) - \zeta(u,x))  \leq \varphi(x)$ which is (ii) therein.  Note that we do not need all the properties of the partial metric from \cite[Definition 2.1]{HK2020}.
 
 Further, $\eta$  verifies $(\mathcal{A}_1)$ and $(\mathcal{A}_2)$ by \eqref{eqZeta}. If $(\mathcal{A}_4)$ holds then  Theorem~\ref{thmEkelandWeakE} yields  a point $u \in X$ such that $0 \leq\zeta(u,x) - \zeta(u,u) \leq r$; that $\varphi(u) \leq   \varphi(u) + \hbox{$\frac{\delta}{r}$} (\zeta(u,x) - \zeta(u,u)) =  \varphi(u) + \hbox{$\frac{\delta}{r}$} \, \eta(u,x)  \leq \varphi(x)$; and that  
$ \varphi(u') +\hbox{$\frac{\delta}{r}$} \, \zeta(u',u) \geq  \varphi(u') +\hbox{$\frac{\delta}{r}$} \, \eta(u',u) \geq \varphi(u)$ for every  $u'\in X$. We obtain a stronger conclusion than  \cite[Theorem 4.1]{HK2020}; but the assumption $(\mathcal{A}_4)$ seems to be stronger than $0$-completeness. 
\end{remark}

\section{Almost regularity, subregularity, and semiregularity} \label{scRegularityProperties}

We start with approximate versions of the abstract properties from \cite[Definitions 2.21, 2.22, and 2.23]{IoffeBook}, which are equivalent to each other in the sense of Proposition~\ref{propApproxRegularitiesEquiv} below; and such that each property covers three substantially different concepts defined later. 
\begin{definition} \label{defApproxRegularities}
Let  $(X,d)$ and $(Y, \varrho)$ be metric spaces. Consider a $\mu > 0$,  non-empty sets $U \subset X$ and $V \subset Y$, a function $\gamma: X \to[0, \infty]$ defined and not identically zero on $U$, and a set-valued mapping  $G:X \rightrightarrows Y$. The mapping $G$ is said to have for $\mu$ and $\gamma$ on $U \times V$  property
\begin{itemize}
  \item[$(\mathcal{O})$]  provided that $
\ball_Y(y,\mu t) \cap V  \subset \overline{G\big(\ball_X(x,t)\big)}$ for each $ x \in U$, each $y \in G(x)$,  and each $ t  \in (0,\gamma(x))$;
 \item[$(\mathcal{R})$]  provided that $ \lim_{\varepsilon\downarrow 0} 	\dist\left(x,G^{-1}\big(\ball_Y(y,\varepsilon)\big)\right)\leq \mu  \dist (y, G(x))$ for each  $(x,y)  \in U \times  V$ such that  $\mu  \dist (y, G(x)) < \gamma(x)$;
   \item[$(\mathcal{L}^{-1})$]   provided that $ \lim_{\varepsilon\downarrow 0} 	\dist\left(x,G^{-1}\big(\ball_Y(y',\varepsilon)\big)\right)\leq \mu\, \varrho(y,y')$ for each  $y\in Y$, each $y'  \in V$,  and each $x \in G^{-1}(y)\cap U$ such that  $\mu\, \varrho(y,y') < \gamma(x)$. 
\end{itemize}
\end{definition}

Now we present an analogue of \cite[Theorem 2.25]{IoffeBook}.   

\begin{proposition} \label{propApproxRegularitiesEquiv}  Let  $(X,d)$ and $(Y, \varrho)$ be metric spaces. Consider a $c > 0$, non-empty sets $U \subset X$ and $V \subset Y$, a function $\gamma: X \to(0, \infty]$ defined and not identically zero on $U$, and  a set-valued mapping  $G:X \rightrightarrows Y$. Then  $G$ has property $(\mathcal{O})$ on $U \times V$ for $c$ and $\gamma$ if and only if $G$ has property $(\mathcal{R})$ on $U \times V$ for $1/c$ and $\gamma$ if and only if  $G$ has property  $(\mathcal{L}^{-1})$ on $U \times V$ for $1/c$ and $\gamma$.
\end{proposition}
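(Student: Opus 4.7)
The plan is to establish the cycle of implications $(\mathcal{O}) \Rightarrow (\mathcal{R}) \Rightarrow (\mathcal{L}^{-1}) \Rightarrow (\mathcal{O})$, where the openness constant $c$ in $(\mathcal{O})$ corresponds to the rate $1/c$ in the remaining two properties. Throughout I will use the elementary observation that, for fixed $x \in X$ and $y \in Y$, the map $\varepsilon \mapsto \dist(x, G^{-1}(\ball_Y(y,\varepsilon)))$ is non-increasing in $\varepsilon$, so its limit as $\varepsilon \downarrow 0$ exists in $[0,\infty]$.

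For $(\mathcal{O}) \Rightarrow (\mathcal{R})$, I would fix $(x, y) \in U \times V$ with $\alpha := \dist(y, G(x))$ satisfying $\alpha/c < \gamma(x)$; since $\gamma(x) \leq \infty$ and $c > 0$, this forces $\alpha < \infty$ and hence $G(x) \neq \emptyset$. For any auxiliary radius $\tau \in (\alpha/c, \gamma(x))$, I pick $y_0 \in G(x)$ with $\varrho(y_0, y) < c\tau$ (possible since $\alpha < c\tau$) and apply $(\mathcal{O})$ to the pair $(x, y_0)$ at radius $\tau$, obtaining $\ball_Y(y_0, c\tau) \cap V \subset \overline{G(\ball_X(x,\tau))}$. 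Since $y$ lies on the left-hand side, for every $\varepsilon > 0$ there exists $x' \in \ball_X(x,\tau)$ with $G(x') \cap \ball_Y(y,\varepsilon) \neq \emptyset$, so $\dist(x, G^{-1}(\ball_Y(y,\varepsilon))) < \tau$. Letting first $\varepsilon \downarrow 0$ and then $\tau \downarrow \alpha/c$ yields the $(\mathcal{R})$-estimate with rate $1/c$.

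The implication $(\mathcal{R}) \Rightarrow (\mathcal{L}^{-1})$ is a direct specialization: given $y \in Y$, $y' \in V$, and $x \in G^{-1}(y) \cap U$ with $\varrho(y,y')/c < \gamma(x)$, the inclusion $y \in G(x)$ gives $\dist(y', G(x)) \leq \varrho(y, y')$, so the hypothesis of $(\mathcal{R})$ at the pair $(x, y')$ is fulfilled and its conclusion is exactly the bound required by $(\mathcal{L}^{-1})$. For the remaining $(\mathcal{L}^{-1}) \Rightarrow (\mathcal{O})$, I would fix $x \in U$, $y \in G(x)$, $t \in (0, \gamma(x))$, and take an arbitrary $y' \in \ball_Y(y, ct) \cap V$. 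Then $x \in G^{-1}(y) \cap U$ and $\varrho(y, y')/c < t < \gamma(x)$, so $(\mathcal{L}^{-1})$ applied to the triple $(y, y', x)$ gives $\lim_{\varepsilon \downarrow 0} \dist(x, G^{-1}(\ball_Y(y',\varepsilon))) < t$. Hence for all sufficiently small $\varepsilon > 0$ there is an $x' \in \ball_X(x, t)$ with $G(x') \cap \ball_Y(y', \varepsilon) \neq \emptyset$; as $\varepsilon$ is arbitrary, $y'$ is a limit point of $G(\ball_X(x, t))$, giving the inclusion required by $(\mathcal{O})$.

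No single step is deep; the main obstacle is the careful bookkeeping of strict versus non-strict inequalities. Specifically, in $(\mathcal{O}) \Rightarrow (\mathcal{R})$ one must insert the auxiliary radius $\tau$ strictly between $\alpha/c$ and $\gamma(x)$ so that a witness $y_0 \in G(x)$ exists with $\varrho(y, y_0) < c\tau$ while still permitting the invocation of $(\mathcal{O})$ (which requires $\tau < \gamma(x)$), and then pass to the limit in both $\varepsilon$ and $\tau$ to recover the non-strict $(\mathcal{R})$-bound. A symmetric care is required in the closure step of $(\mathcal{L}^{-1}) \Rightarrow (\mathcal{O})$ to convert the strict bound $\varrho(y,y')/c < t$ into membership in $\overline{G(\ball_X(x,t))}$.
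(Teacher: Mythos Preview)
Your proof is correct and follows essentially the same cyclic route $(\mathcal{O})\Rightarrow(\mathcal{R})\Rightarrow(\mathcal{L}^{-1})\Rightarrow(\mathcal{O})$ as the paper. The only cosmetic difference is in the first implication: the paper ties the auxiliary radius to a single parameter by setting $t:=\tfrac{1}{c}\big(\dist(y,G(x))+\varepsilon\big)$ and lets $\varepsilon\downarrow 0$ once, whereas you decouple the two by introducing an independent $\tau\in(\alpha/c,\gamma(x))$ and then pass to the limit in $\varepsilon$ and $\tau$ separately; both arguments are equally valid.
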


\begin{proof} Put $\mu := 1/c$. 
Assume that $G$ has property $(\mathcal{O})$ on $U \times V$ for $c$ and $\gamma$. Fix arbitrary $x \in U$ and $y \in V$ with
$\mu \dist (y, G(x)) < \gamma(x)$. Pick an arbitrary $\varepsilon > 0$ such that  $t:= \mu (\dist (y, G(x)) + \varepsilon) \in (0, \gamma(x))$.   Find a $v \in G(x)$ such that $ \varrho(y,v) < \dist (y, G(x)) + \varepsilon$. Then $y \in \ball_Y(v,ct) \cap V$. By property $(\mathcal{O})$, there are $w \in Y$ and $u\in \ball_X(x,t)$ such that $\varrho(y,w) < \varepsilon$ and $w \in G(u)$. As $ u\in G^{-1}(w) \subset G^{-1}\big(\ball_Y(y,\varepsilon)\big)$, we have  
\begin{eqnarray*}
\dist\left(x,G^{-1}\big(\ball_Y(y,\varepsilon)\big)\right) \leq d(x,u)  < t = \mu (\dist (y, G(x)) + \varepsilon) \quad \quad \big(< \gamma(x)\big).
\end{eqnarray*}
The quantity on the left-hand side of the inequality above is increasing and bounded from above by $\gamma(x)$ when $\varepsilon \downarrow 0$. Letting $\varepsilon \downarrow 0$, and remembering that $x \in U$ and $y \in V$ with
$\mu \dist (y, G(x)) < \gamma(x)$ are arbitrary,  we conclude that $G$ has property $(\mathcal{R})$ on $U \times V$ for $\mu$ and $\gamma$.  

Assume that $G$ has property $(\mathcal{R})$ on $U \times V$ for $\mu$ and $\gamma$.  Fix arbitrary $y \in Y$, $y' \in V$, and  $x \in G^{-1}(y)\cap U$ with  $\mu \,  \varrho (y,y') < \gamma(x)$.  Then $\mu   \dist(y', G(x)) \leq \mu \,  \varrho (y',y) < \gamma(x)$. So $ \lim_{\varepsilon\downarrow 0} 	\dist\left(x,G^{-1}\big(\ball_Y(y',\varepsilon)\big)\right)\leq \mu   \dist (y', G(x)) \leq  \mu \varrho (y',y) $. Hence $G$ has property $(\mathcal{L}^{-1})$ on $U \times V$ for $\mu$ and $\gamma$.

Assume that $G$ has property $(\mathcal{L}^{-1})$ on $U \times V$ for $\mu$ and $\gamma$. Let $x\in U$, $y \in G(x)$, and $t\in (0,\gamma(x))$ be arbitrary. Pick any  $v\in\ball_Y\left(y, c t\right)\cap V$, if there is such. Then $  \mu \varrho(v,y) =  \varrho(v,y)/c < t < \gamma(x)$. Using property $(\mathcal{L}^{-1})$, for each $\varepsilon > 0$, small enough, we have   
$
 \dist\left(x,G^{-1}\big(\ball_Y(v,\varepsilon)\big)\right) < t;
$
so there is a $u_{\varepsilon}   \in X$  such that $G(u_{\varepsilon} ) \cap \ball_Y(v,\varepsilon) \neq \emptyset$  and  $d(x,u_{\varepsilon} ) < t$; and therefore
$$
  \dist\big(v,G( \ball_X(x,t))\big) \leq \dist(v,G(u_{\varepsilon} ))  < \varepsilon.
$$
Consequently,  $v \in \overline{G\big(\ball_X(x,t)\big)}$. Since $v\in\ball_Y\left(y, c t\right)\cap V$ as well as $x\in U$, $y \in G(x)$, and $t\in 
  (0,\gamma(x))$ are arbitrary, we conclude that  $G$ has property $(\mathcal{O})$ on $U \times V$ for $c$ and $\gamma$. The loop is closed.
\end{proof}

It is an old, well-known, although nowadays almost not used, fact that any regularity property of a set-valued mapping can be deduced from the same property of a particular single-valued mapping; namely, the restriction of the canonical projection from $X \times Y$ into $Y$ to the graph of the set-valued mapping, see e.g. \cite[Proposition 2.33]{IoffeBook}. The same is true for  our approximate version:    
\begin{remark} \label{rmkProjection} \rm Let all the assumptions of Proposition~\ref{propApproxRegularitiesEquiv} be satisfied and let $\alpha \in (0, 1/c)$ be arbitrary. Define a (compatible) metric $\omega$ on $X \times Y$ by 
$$
  \omega\big((u,v), (x,y) \big) := \max\big\{d(u,x), \alpha \varrho(v,y) \big\}, 
  \quad (u,v), (x,y) \in X \times Y.  
$$
 {\it Then $G$ has property $(\mathcal{O})$  on $U \times V$  for $c$ and $\gamma$  if and only if a mapping $g : X \times Y \to Y$ defined by $g(x,y) = y$, $(x,y) \in \gph G$, has property $(\mathcal{O})$  on $\big((U \times Y)\cap \gph G \big)\times V$  for $c$ and $\widetilde{\gamma}(x,y) := \gamma(x)$, $(x,y) \in X \times Y$.} \\
Assume that $G$ has property $(\mathcal{O})$  on $U \times V$  for $c$ and $\gamma$. Fix arbitrary $(x,y) \in (U \times Y) \cap \gph G$ and $t \in (0, \widetilde{\gamma}(x,y))$. So $x \in U$, $g(x,y)=y \in G(x)$, and $t < \gamma(x)$. Fix any $v \in   \ball_Y(y, c t) \cap V$. Then $v \in  \overline{G\big(\ball_X(x,t)\big)}$. Let $\varepsilon \in (0, t(1-\alpha c)/\alpha)$ be arbitrary. There are $w \in Y$ and  $u \in \ball_X(x,t)$ such that  $ \varrho(v, w) < \varepsilon$  and  $w \in G(u)$. Then $\varrho(v, g(u,w)) =  \varrho(v, w) < \varepsilon$. As $d(x,u) < t$ and $\varrho(y,w) \leq \varrho(y,v) + \varrho(v,w) <  c t + \varepsilon < t/\alpha$, we have
$\omega\big((x,y), (u,w) \big) < t$. Since $\varepsilon$ can be arbitrarily small, we get $v \in \overline{g\big( \ball_{X\times Y}^{ \, \omega}((x,y),t) \big)}$; for any $v \in   \ball_Y(y, c t) \cap V$. \\
 On the other hand, assume that $g$ has property $(\mathcal{O})$  on $\big((U \times Y)\cap \gph G \big)\times V$  for $c$ and $\widetilde{\gamma}$. Fix arbitrary $x \in U$,  $y \in G(x)$, and $t \in (0, \gamma(x))$.   Pick any $w \in   \ball_Y(y, c t) \cap V = \ball_Y(g(x,y), c t) \cap V $. Given an $\varepsilon > 0$, there is a $(u,v) \in X \times Y$, with $v \in G(u)$, such that $d(u,x) < t$, $\alpha \varrho (v,y) < t$, and $ \varepsilon > \varrho(w, g(u,v)) = \varrho (w,v)$; thus $\dist \big(w,G(\ball_X(x,t))\big) \leq \dist(w, G(u)) \leq \varrho(w,v) < \varepsilon$. So $w \in \overline{G(\ball_X(x,t))}$.  
\end{remark}

Letting $U$ and $V$ be the neighborhoods of a fixed reference point in the graph of $G$, we  get the notion of  \emph{almost regularity around} this point:  
 
\begin{definition} \label{defApproxRegularity}
Let  $(X,d)$ and $(Y, \varrho)$ be metric spaces. Consider a mapping  $G:X \rightrightarrows Y$ and a point $(\bar{x}, \bar{y}) \in \gph G$. Then $G$ is said to
\begin{itemize} 
   \item[$(i)$]  be  \emph{almost open with a constant $c > 0$ around} $(\bar{x}, \bar{y})$
 if there is a $\gamma > 0$ such that  
\begin{equation}\label{defApproxOpen}
\ball_Y(y,ct) \cap \ball_Y(\bar{y}, \gamma)  \subset \overline{G\big(\ball_X(x,t)\big)}
\end{equation}
 for each $ x \in \ball_X(\bar{x}, \gamma)$, each $y \in G(x)$,  and each  $t  \in (0,\gamma)$;
 \item[$(ii)$]  be  \emph{almost metrically regular with a constant  $\kappa > 0$ around} $(\bar{x}, \bar{y})$ if there is a $\gamma > 0$ such that 
\begin{equation} \label{defApproxMetricRegularity}
\lim\nolimits_{\varepsilon\downarrow 0} 	\dist\left(x,G^{-1}\big(\ball_Y(y,\varepsilon)\big)\right)\leq \kappa \, \dist(y, G(x))
\end{equation}
for each $x \in\ball_X(\bx,\gamma)$ and each $y \in \ball_Y(\bar{y},\gamma)$ with  $\kappa  \dist(y, G(x)) < \gamma$;
   \item[$(iii)$] have an \emph{almost pseudo-Lipschitz inverse with a constant $\ell > 0$ around} $(\bar{x}, \bar{y})$
 if there is a $\gamma > 0$ such that  
 \begin{equation}\label{defApproxAubin}
 \lim\nolimits_{\varepsilon\downarrow 0} 	\dist\left(x,G^{-1}\big(\ball_Y(y',\varepsilon)\big)\right)\leq \ell \,\varrho(y,y')
\end{equation}
for each $y \in Y$, each $y'  \in \ball_Y(\bar{y}, \gamma)$, and each $x \in G^{-1}(y)\cap \ball_X(\bar{x}, \gamma)$ such that $ \ell \,\varrho(y,y') < \gamma$.
\end{itemize}
The \emph{modulus of almost openness} of $G$  around  $(\bar{x}, \bar{y})$,  denoted by $\overline{\sur}\,G(\bar{x}, \bar{y})$,
is the  supremum of the set of all $c > 0$ such that $G$ is almost open with the constant $c$ around $(\bar{x}, \bar{y})$. The \emph{modulus of almost metric regularity} of $G$  around  $(\bar{x}, \bar{y})$,  denoted by
$\overline{\reg}\,G(\bar{x}, \bar{y})$, is the  infimum of the set of all $\kappa > 0$ such that $G$ is almost metrically regular with the constant $\kappa$ around $(\bar{x}, \bar{y})$. The \emph{approximate Lipschitz modulus} of $G^{-1}$  around  $(\bar{y}, \bar{x})$, denoted by
$\overline{\lip}\, G^{-1}(\bar{y}, \bar{x})$, is the infimum of the set of all $\ell > 0$ such that $G$ has an almost pseudo-Lipschitz inverse with the constant $\ell$ around $(\bar{x}, \bar{y})$. 
\end{definition}

\begin{remark} \label{rmAlmostRegularSeq} \rm Fix any $x \in X$ and $y \in Y$ with $\dist(y, G(x)) < \infty$.
{\it Inequality \eqref{defApproxMetricRegularity} holds true if and only if for each $\varepsilon > 0$ there is a sequence $(y_k)_{k \in \mathbb{N}}$ converging to $y$ such that} 
\begin{equation}\label{defApproxMetricRegularityB}
  	\dist\big(x,G^{-1}(y_k)\big)\leq \kappa \, \big(\dist(y, G(x)) + \varepsilon\big) 
  	\quad \mbox{for each} \quad k \in \mathbb{N}.
\end{equation}
Assume that \eqref{defApproxMetricRegularity} holds true. Let  $\varepsilon > 0$ be arbitrary.  For each $k \in \mathbb{N}$ we have 
\begin{eqnarray*}
 \dist\left(x,G^{-1}\big(\ball_Y(y,\varepsilon/k)\big)\right) & \leq & \lim_{k \to \infty} 	\dist\left(x,G^{-1}\big(\ball_Y(y,\varepsilon/k)\big)\right) \leq   \kappa  \dist(y,G(x)) \\
 & < &   \kappa \, \big(\dist(y,G(x)) + \varepsilon\big).
\end{eqnarray*}
Thus there is  a sequence $(x_k)_{k \in \mathbb{N}}$ such that for each $k \in \mathbb{N}$ we have  $d(x,x_k) < \kappa \, \big(\dist(y,G(x)) + \varepsilon\big)$ and  $G(x_k)  \cap \ball_Y(y,\varepsilon/k) \neq \emptyset$, containing a point $y_k$, say. Hence $y_k \to y$ as $k \to \infty$ and 
$$
 \dist\big(x,G^{-1}(y_k)\big)\leq d(x,x_k)   < \kappa \, \big(\dist(y,G(x)) + \varepsilon\big) 
 \quad \mbox{for each} \quad k \in \mathbb{N}.
$$
On the other hand, pick any $\varepsilon > 0$. Find $(y_k)_{k \in \mathbb{N}}$ satisfying \eqref{defApproxMetricRegularityB} and converging to $y$. Let $k \in \mathbb{N}$ be such that $y_k \in \ball_Y(y,\varepsilon)$. Then 
$\dist\left(x,G^{-1}\big(\ball_Y(y,\varepsilon)\big)\right)\leq \dist\big(x,G^{-1}(y_k)\big)\leq \kappa \, \big(\dist(y, G(x)) + \varepsilon\big)
$.
Letting $\varepsilon \downarrow 0$, we get \eqref{defApproxMetricRegularity}.
\end{remark}

\begin{remark} \label{rmAlmostLip} \rm Fix any $x \in X$ and $y$, $y' \in Y$.
	{\it Inequality \eqref{defApproxAubin} holds true if and only if for each $\varepsilon > 0$ there is a sequence $(y_k)_{k \in \mathbb{N}}$ converging to $y'$ such that} 
	\begin{equation}\label{defApproxLipschitzB}
	\dist\big(x,G^{-1}(y_k)\big)\leq \ell \, \big(\varrho(y, y') + \varepsilon\big) 
	\quad \mbox{for each} \quad k \in \mathbb{N}.
	\end{equation}
     Assume that \eqref{defApproxAubin} holds true.  Let  $\varepsilon > 0$ be arbitrary.  For each $k \in \mathbb{N}$ we have 
\begin{eqnarray*}
	\dist\left(x,G^{-1}\big(\ball_Y(y',\varepsilon/k)\big)\right) & \leq &  \lim_{k \to \infty} 	\dist\left(x,G^{-1}\big(\ball_Y(y',\varepsilon/k)\big)\right) \leq  \ell \,  \varrho(y,y') \\
	&  < &  \ell \, \big( \varrho (y,y') + \varepsilon\big).
\end{eqnarray*}
	Thus there is  a sequence $(x_k)_{k \in \mathbb{N}}$ such that for each $k \in \mathbb{N}$ we have  $ d(x,x_k) < \ell \, \big( \varrho (y,y') + \varepsilon\big)$ and  $G(x_k) \cap \ball_Y(y',\varepsilon/k) \neq \emptyset$, containing a point $y_k$, say.  Hence $y_k \to y'$ as $k \to \infty$ and 
	$$
	\dist\big(x,G^{-1}(y_k)\big)\leq d(x,x_k)   < \ell \, \big( \varrho (y,y') + \varepsilon\big)
	 \quad \mbox{for each} \quad k \in \mathbb{N}.
    $$
    On the other hand, pick any $\varepsilon > 0$. Find  $(y_k)_{k \in \mathbb{N}}$ satisfying  \eqref{defApproxLipschitzB} and converging to $y'$. Let $k \in \mathbb{N}$ be such that $y_k \in \ball_Y(y',\varepsilon)$. Then 
	$\dist\left(x,G^{-1}\big(\ball_Y(y',\varepsilon)\big)\right)\leq \dist\big(x,G^{-1}(y_k)\big)\leq \ell \, \big( \varrho(y, y') + \varepsilon\big)
	$.
	Letting $\varepsilon \downarrow 0$, we get \eqref{defApproxAubin}. 
\end{remark}

As in the case of regularity we have the following equivalence:

\begin{proposition} \label{propApproxRegularityEquiv} Let  $(X,d)$ and $(Y, \varrho)$ be metric spaces. Consider a mapping  $G:X \rightrightarrows Y$ and a point $(\bar{x}, \bar{y}) \in \gph G$. Then
$
  \overline{\sur}\,G(\bar{x}, \bar{y}) \cdot \overline{\reg}\,G(\bar{x}, \bar{y}) = 1
$ 
and
$
\overline{\reg}\,G(\bar{x}, \bar{y}) =  \overline{\lip}\,G^{-1}(\bar{y}, \bar{x})
$.
\end{proposition}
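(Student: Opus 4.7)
The plan is to deduce Proposition~\ref{propApproxRegularityEquiv} as a routine consequence of the abstract equivalence already established in Proposition~\ref{propApproxRegularitiesEquiv}. First I will check that the three notions of Definition~\ref{defApproxRegularity} are literal specializations of the three abstract properties of Definition~\ref{defApproxRegularities} via the choices $U := \ball_X(\bar{x}, \gamma)$, $V := \ball_Y(\bar{y}, \gamma)$, and the function also denoted by $\gamma$ therein taken to be the constant function equal to $\gamma > 0$. Unfolding the conditions side by side shows that, for every such $\gamma > 0$, property $(\mathcal{O})$ with constant $c$ on $U \times V$ reads verbatim as almost openness of $G$ with constant $c$ and parameter $\gamma$ around $(\bar{x}, \bar{y})$; property $(\mathcal{R})$ with constant $1/c$ reads as almost metric regularity of $G$ with constant $1/c$ and parameter $\gamma$; and property $(\mathcal{L}^{-1})$ with constant $1/c$ reads as the almost pseudo-Lipschitz property of $G^{-1}$ with constant $1/c$ and parameter $\gamma$ around $(\bar{y}, \bar{x})$. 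This matching is a straightforward syntactic check that I would not spell out in detail.

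Given this match, Proposition~\ref{propApproxRegularitiesEquiv} provides, for every fixed $\gamma > 0$ and every $c > 0$, the three-way equivalence of these specialized conditions. Consequently the involution $c \mapsto 1/c$ on $(0, \infty)$ maps the set of all $c > 0$ that are admissible constants of almost openness of $G$ around $(\bar{x}, \bar{y})$ bijectively onto both the set of admissible metric-regularity constants for $G$ around $(\bar{x}, \bar{y})$ and the set of admissible pseudo-Lipschitz constants for $G^{-1}$ around $(\bar{y}, \bar{x})$. Taking the supremum of the first set and the infimum of the other two, and using $(\sup S)\cdot \inf\{1/c: c\in S\} = 1$ for every non-empty $S \subset (0, \infty)$, yields both identities $\overline{\sur}\,G(\bar{x}, \bar{y}) \cdot \overline{\reg}\,G(\bar{x}, \bar{y}) = 1$ and $\overline{\reg}\,G(\bar{x}, \bar{y}) = \overline{\lip}\,G^{-1}(\bar{y}, \bar{x})$.

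The only subtlety, and thus the part I expect to require the most care, is the treatment of the degenerate cases of the product identity. If no $c > 0$ is admissible, then by the pointwise equivalence the corresponding sets of regularity and pseudo-Lipschitz constants are also empty, so the relevant supremum equals $\sup \emptyset = 0$ while the relevant infima equal $\inf \emptyset = \infty$; the convention $0 \cdot \infty = 1$ fixed in the notation section then makes the product identity hold. The dual case with supremum $\infty$ and infima $0$ is handled identically, and nothing else subtle arises.
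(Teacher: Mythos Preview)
Your proposal is correct and follows essentially the same approach as the paper: both reduce the statement to Proposition~\ref{propApproxRegularitiesEquiv} applied with $U=\ball_X(\bar x,\gamma)$, $V=\ball_Y(\bar y,\gamma)$, and the constant function $\gamma$, and then pass to the supremum/infimum while invoking the convention $0\cdot\infty=1$ for the degenerate cases. The only cosmetic difference is that the paper argues via the two inequalities $\overline{\sur}\cdot\overline{\reg}\le 1$ and $\overline{\sur}\cdot\overline{\reg}\ge 1$ separately, whereas you package this as the single identity $(\sup S)\cdot\inf\{1/c:c\in S\}=1$ for the set $S$ of admissible openness constants.
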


\begin{proof} For each $c \in \big(0, \overline{\sur}\,G(\bar{x}, \bar{y})\big)$, if there is any, Proposition~\ref{propApproxRegularitiesEquiv} 
implies that  $1 = c \cdot \hbox{$\frac{1}{c}$} \geq c \cdot \overline{\reg}\,G(\bar{x}, \bar{y})$;  letting $c\uparrow \overline{\sur}\,G(\bar{x}, \bar{y})$, we get $1 \geq \overline{\sur}\,G(\bar{x}, \bar{y})\cdot \overline{\reg}\,G(\bar{x}, \bar{y})$. By the convention $0 \cdot \infty = 1$, the last inequality  also holds when $\overline{\sur}\,G(\bar{x}, \bar{y})= 0$. 
Similarly, for  each $\kappa > \overline{\reg}\,G(\bar{x}, \bar{y})$, if there is any, Proposition~\ref{propApproxRegularitiesEquiv} 
implies that  $1 =  \hbox{$\frac{1}{\kappa}$} \cdot \kappa \leq \overline{\sur}\,G(\bar{x}, \bar{y}) \cdot \kappa$; letting  $\kappa \downarrow \overline{\reg}\,G(\bar{x}, \bar{y})$, we get $1 \leq \overline{\sur}\,G(\bar{x}, \bar{y}) \cdot \overline{\reg}\,G(\bar{x}, \bar{y})$. By our convention, the last inequality  also holds when $\overline{\reg}\,G(\bar{x}, \bar{y})= \infty$. We proved the first relation; the latter one is obvious by Proposition~\ref{propApproxRegularitiesEquiv}.
\end{proof}

Taking a neighborhood of the reference point as $U$ and $V$ being singleton containing the reference point only, we  get the notion of \emph{almost subregularity}:   
\begin{definition} \label{defApproxSubregularity}
Let  $(X,d)$ and $(Y, \varrho)$ be metric spaces. Consider a mapping  $G:X \rightrightarrows Y$ and a point $(\bar{x}, \bar{y}) \in \gph G$. Then $G$ is said to 
\begin{itemize} 
   \item[$(i)$] be \emph{almost pseudo-open with a constant $c>0$  at} $(\bar{x}, \bar{y})$
 if there is a $\gamma > 0$ such that 
 $ \bar{y}\in \overline{G\left(\ball_X(x,t)\right)} $
 for each  $x\in\ball_X(\bx,\gamma)$ and each $t\in (0,\gamma)$ with $G(x) \cap \ball_Y(\bar{y},ct) \neq \emptyset$;
 \item[$(ii)$] be \emph{almost metrically subregular with a constant $\kappa>0$ at} $(\bar{x}, \bar{y})$ 
 if there is a $\gamma > 0$ such that 
 $ \lim\nolimits_{\varepsilon\downarrow 0} 	\dist\left(x,G^{-1}\big(\ball_Y(\bar{y},\varepsilon)\big)\right) \leq \kappa \, \dist (\bar{y},G(x)) $
 for each $x\in\ball_X(\bx,\gamma)$  with $ \kappa \, \dist(\bar{y}, G(x)) < \gamma$;
   \item[$(iii)$] have an \emph{almost calm inverse with a constant $\ell > 0$ at} $(\bar{x}, \bar{y})$
 if there is a $\gamma > 0$ such that 
 $ \lim\nolimits_{\varepsilon\downarrow 0} 	\dist\left(x,G^{-1}\big(\ball_Y(\by,\varepsilon)\big)\right)\leq \ell \, \varrho(\by,y)$
for each $ y \in Y$ and each $x  \in G^{-1}(y) \cap \ball_X(\bar{x}, \gamma)$ with $ \ell \, \varrho (\by, y) < \gamma$.
\end{itemize}
The  \emph{modulus of almost pseudo-openness} of $G$  at  $(\bar{x}, \bar{y})$, denoted by
$\overline{\psopen}\,G(\bar{x}, \bar{y})$, is the  supremum of the set of all $c > 0$ such that $G$ is almost pseudo-open with the constant $c$ at $(\bar{x},\bar{y})$.
The \emph{modulus of almost metric subregularity} of $G$  at  $(\bar{x}, \bar{y})$, denoted by
$\overline{\subreg}\,G(\bar{x}, \bar{y})$, is the  infimum of the set of all $\kappa > 0$ such that  $G$ is almost metrically subregular with the constant $\kappa$ at $(\bar{x},\bar{y})$.
The \emph{modulus  of almost calmness} of $G^{-1}$  at  $(\bar{y}, \bar{x})$, denoted by
$\overline{\calm}\,G^{-1}(\bar{y}, \bar{x})$, is the  infimum of the set of all $\ell > 0$ such that  $G$ has an almost calm inverse with the constant $\ell$ at $(\bar{x},\bar{y})$.
\end{definition}

As in the case of subregularity,  using Proposition~\ref{propApproxRegularitiesEquiv} and the same steps from the proof of Proposition~\ref{propApproxRegularityEquiv}, we get:

\begin{proposition} \label{propApproxSubegularityEquiv} Let  $(X,d)$ and $(Y, \varrho)$ be metric spaces. Consider a mapping  $G:X \rightrightarrows Y$ and a point $(\bar{x}, \bar{y}) \in \gph G$. Then
$ 
 \overline{\psopen}\,G(\bar{x}, \bar{y}) \cdot \overline{\subreg}\,G(\bar{x}, \bar{y}) = 1
$ and
$ 
 \overline{\subreg}\,G(\bar{x}, \bar{y}) =  \overline{\calm}\,G^{-1}(\bar{y}, \bar{x})
$.
\end{proposition}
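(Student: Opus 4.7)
The plan is to repeat, almost verbatim, the argument given for Proposition~\ref{propApproxRegularityEquiv}, but with the set $V$ appearing in Definition~\ref{defApproxRegularities} chosen to be the singleton $\{\bar y\}$ rather than a full ball around $\bar y$. The first step is to verify that the three notions in Definition~\ref{defApproxSubregularity} are exactly the specializations of the abstract properties $(\mathcal{O})$, $(\mathcal{R})$, $(\mathcal{L}^{-1})$ from Definition~\ref{defApproxRegularities} to the choice $U:=\ball_X(\bar x,\gamma)$, $V:=\{\bar y\}$ and the constant function $X\ni x\longmapsto \gamma\in(0,\infty)$. Indeed, for property $(\mathcal{O})$ the inclusion $\ball_Y(y,ct)\cap\{\bar y\}\subset\overline{G(\ball_X(x,t))}$ (for every $y\in G(x)$) says that whenever $G(x)$ meets $\ball_Y(\bar y,ct)$ one has $\bar y\in\overline{G(\ball_X(x,t))}$, which is exactly almost pseudo-openness; for $(\mathcal{R})$ the condition at $(x,\bar y)$ is precisely the definition of almost metric subregularity with $\mu=\kappa$; and for $(\mathcal{L}^{-1})$, taking $y'=\bar y\in V$ yields exactly the almost calm inverse property with $\mu=\ell$.

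Next, I apply Proposition~\ref{propApproxRegularitiesEquiv} with these choices of $U$ and $V$ (the hypothesis that $\gamma$ is not identically zero on $U$ is trivially met). This gives a direct equivalence, with the same $\gamma$, between $G$ having almost pseudo-openness with constant $c>0$ at $(\bar x,\bar y)$, almost metric subregularity with constant $1/c$ at $(\bar x,\bar y)$, and an almost calm inverse with constant $1/c$ at $(\bar x,\bar y)$. Passing to the moduli then follows the bookkeeping of the proof of Proposition~\ref{propApproxRegularityEquiv}: for every admissible $c<\overline{\psopen}\,G(\bar x,\bar y)$ one has $\overline{\subreg}\,G(\bar x,\bar y)\le 1/c$, so $c\cdot\overline{\subreg}\,G(\bar x,\bar y)\le 1$, and letting $c\uparrow\overline{\psopen}\,G(\bar x,\bar y)$ yields $\overline{\psopen}\,G(\bar x,\bar y)\cdot\overline{\subreg}\,G(\bar x,\bar y)\le 1$. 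Conversely, for every $\kappa>\overline{\subreg}\,G(\bar x,\bar y)$ one gets $\overline{\psopen}\,G(\bar x,\bar y)\ge 1/\kappa$, so $\overline{\psopen}\,G(\bar x,\bar y)\cdot\kappa\ge 1$, and letting $\kappa\downarrow\overline{\subreg}\,G(\bar x,\bar y)$ gives the reverse inequality; the degenerate cases $\overline{\psopen}\,G(\bar x,\bar y)=0$ and $\overline{\subreg}\,G(\bar x,\bar y)=\infty$ are handled by the convention $0\cdot\infty=1$ recalled in the notation. The identity $\overline{\subreg}\,G(\bar x,\bar y)=\overline{\calm}\,G^{-1}(\bar y,\bar x)$ is immediate from the equivalence of $(\mathcal{R})$ and $(\mathcal{L}^{-1})$ with the same constant $\mu$.

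The only mildly delicate point is the matching of the two ``$\gamma$'s'' that appear in the three items of Definition~\ref{defApproxSubregularity}: a priori, the $\gamma$ certifying almost pseudo-openness with constant $c$ need not coincide with the $\gamma$ certifying almost metric subregularity with constant $1/c$. This is not a real obstacle, however, because Proposition~\ref{propApproxRegularitiesEquiv} is stated for a fixed $\gamma$ and produces the implied property with that same $\gamma$; so one can simply take the minimum of the two $\gamma$'s in each direction of the equivalence when shrinking neighborhoods. Since the whole argument is purely formal and reuses the structure of Proposition~\ref{propApproxRegularityEquiv}, it is natural (and is what the paper indicates by ``see the proof of Proposition~\ref{propApproxRegularityEquiv}'') to write the proof in a few lines, simply citing Proposition~\ref{propApproxRegularitiesEquiv} applied with $V=\{\bar y\}$.
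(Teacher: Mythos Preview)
Your proposal is correct and is exactly the approach the paper intends: the paper's own proof is just the sentence ``Proposition~\ref{propApproxRegularitiesEquiv} implies the following equivalence, see the proof of Proposition~\ref{propApproxRegularityEquiv}'', and you have spelled out precisely that --- specialize Definition~\ref{defApproxRegularities} to $U=\ball_X(\bar x,\gamma)$, $V=\{\bar y\}$, $\gamma$ constant, check that this recovers the three items of Definition~\ref{defApproxSubregularity}, and then copy the modulus bookkeeping from the proof of Proposition~\ref{propApproxRegularityEquiv}. Your remark about the matching of the two $\gamma$'s is also handled correctly, since Proposition~\ref{propApproxRegularitiesEquiv} keeps $\gamma$ fixed throughout.
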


On the other hand, if $U$ is a singleton containing the reference point only and $V$ is a neighborhood of the reference point, we  get the notion of \emph{almost 
semiregularity}, see \cite{CFK2019} for the historical overview of semiregularity property together with a list of different names used in the literature.   
\begin{definition} \label{defApproxSemiregularity}
Let  $(X,d)$ and $(Y, \varrho)$ be metric spaces. Consider a mapping  $G:X \rightrightarrows Y$ and a point $(\bar{x}, \bar{y}) \in \gph G$. Then $G$ is said to  
\begin{itemize} 
   \item[$(i)$] be \emph{almost open with a constant $c >0$ at} $(\bar{x}, \bar{y})$
 if there is a $\gamma>0$ such that 
$\ball_Y(y,ct) \cap \ball_Y(\bar{y}, \gamma)  \subset \overline{G\big(\ball_X(\bar{x},t)\big)}$
 for each $ y \in G(\bar{x})$  and each $  t  \in (0,\gamma)$;
 \item[$(ii)$] be \emph{almost metrically semiregular  with a constant $\kappa >0$ at} $(\bar{x}, \bar{y})$ if  there is a $\gamma>0$ such that 
$\lim\nolimits_{\varepsilon\downarrow 0} 	\dist\left(\bar{x},G^{-1}\big(\ball_Y(y,\varepsilon)\big)\right)\leq \kappa \, \dist(y, G(\bar{x}))$
for each $y  \in  \ball_Y(\bar{y},\gamma)$ with $ \kappa  \dist(y, G(\bar{x})) < \gamma$; 
   \item[$(iii)$] have an \emph{almost inner-calm inverse  with a constant $\ell >0$ at} $(\bar{x}, \bar{y})$
  if there is a $\gamma>0$ such that 
$ \lim\nolimits_{\varepsilon\downarrow 0} 	\dist\left(\bx,G^{-1}\big(\ball_Y(y',\varepsilon)\big)\right)\leq \ell \, \varrho (y,y')$
 for each $y \in G(\bx)$ and each $y' \in \ball_Y(\bar{y}, \gamma)$  with $ \ell \, \varrho (y,y') < \gamma$.
\end{itemize}
The  \emph{modulus of almost openness} of $G$  at  $(\bar{x}, \bar{y})$, denoted by
$\overline{\lopen}\,G(\bar{x}, \bar{y})$, is the  supremum of the set of all $c > 0$ such that $G$ is almost open with the constant $c$ at $(\bar{x},\bar{y})$.
The \emph{modulus of almost metric semiregularity} of $G$  at  $(\bar{x}, \bar{y})$, denoted by
$\overline{\semireg}\,G(\bar{x}, \bar{y})$, is the  infimum of the set of all $\kappa > 0$ such that  $G$ is almost metrically semiregular with the constant $\kappa$ at $(\bar{x},\bar{y})$.
The \emph{modulus  of almost inner-calmness} of $G^{-1}$  at  $(\bar{y}, \bar{x})$, denoted by
$\overline{\recess}\,G^{-1}(\bar{y}, \bar{x})$, is the  infimum of the set of all $\ell > 0$ such that  $G$ has an almost inner-calm inverse with the constant $\ell$ at $(\bar{x},\bar{y})$.
\end{definition}

As in the case of semiregularity,  using Proposition~\ref{propApproxRegularitiesEquiv} and  the same steps from the proof of Proposition~\ref{propApproxRegularityEquiv}, we get:

\begin{proposition} \label{propApproxSemiegularityEquiv} Let  $(X,d)$ and $(Y, \varrho)$ be metric spaces. Consider a mapping  $G:X \rightrightarrows Y$ and a point $(\bar{x}, \bar{y}) \in \gph G$. Then
$ 
 \overline{\lopen}\,G(\bar{x}, \bar{y}) \cdot \overline{\semireg}\,G(\bar{x}, \bar{y}) = 1
$ and
$ 
 \overline{\semireg}\,G(\bar{x}, \bar{y}) =  \overline{\recess}\,G^{-1}(\bar{y}, \bar{x})
$.
\end{proposition}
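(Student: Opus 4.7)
The plan is to mirror the argument used for Proposition~\ref{propApproxRegularityEquiv} essentially verbatim, with the only genuine work being the identification of the correct data $(U,V,\gamma)$ in Definition~\ref{defApproxRegularities} so that properties $(\mathcal{O})$, $(\mathcal{R})$, and $(\mathcal{L}^{-1})$ reduce precisely to the three conditions in Definition~\ref{defApproxSemiregularity}. For almost semiregularity at $(\bar x,\bar y)$ the reference point plays the role of the whole ``domain slice,'' so I take $U:=\{\bar x\}$ and $V:=\ball_Y(\bar y,\gamma)$, with $\gamma(\cdot)$ being the constant function equal to some fixed $\gamma>0$ at $\bar x$ (its values elsewhere are irrelevant since $U$ is a singleton). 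A direct inspection then shows: $(\mathcal{O})$ on $\{\bar x\}\times \ball_Y(\bar y,\gamma)$ for the constant $\mu=c$ is exactly the defining inclusion for almost openness at $(\bar x,\bar y)$ with constant $c$; $(\mathcal{R})$ becomes the almost metric semiregularity condition at $(\bar x,\bar y)$ with constant $\mu$; and $(\mathcal{L}^{-1})$, since $x\in G^{-1}(y)\cap\{\bar x\}$ forces $y\in G(\bar x)$, collapses to the almost inner-calm inverse condition at $(\bar x,\bar y)$ with constant $\mu$.

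Having made this translation, Proposition~\ref{propApproxRegularitiesEquiv} applied to this choice yields, for every $c>0$ with $\mu=1/c$, that $G$ is almost open at $(\bar x,\bar y)$ with constant $c$ if and only if $G$ is almost metrically semiregular at $(\bar x,\bar y)$ with constant $1/c$ if and only if $G$ has an almost inner-calm inverse at $(\bar x,\bar y)$ with constant $1/c$. From this pointwise equivalence the equality $\overline{\semireg}\,G(\bar x,\bar y) = \overline{\recess}\,G^{-1}(\bar y,\bar x)$ is immediate by taking infima over the respective admissible constants, since the two classes of constants coincide.

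For the product formula I would reproduce the bookkeeping from Proposition~\ref{propApproxRegularityEquiv}. For each $c\in\bigl(0,\overline{\lopen}\,G(\bar x,\bar y)\bigr)$, if any exists, the translated equivalence gives $\overline{\semireg}\,G(\bar x,\bar y)\le 1/c$, hence $c\cdot\overline{\semireg}\,G(\bar x,\bar y)\le 1$; letting $c\uparrow\overline{\lopen}\,G(\bar x,\bar y)$ yields $\overline{\lopen}\,G(\bar x,\bar y)\cdot\overline{\semireg}\,G(\bar x,\bar y)\le 1$, and this bound continues to hold when $\overline{\lopen}\,G(\bar x,\bar y)=0$ thanks to the convention $0\cdot\infty=1$. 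Symmetrically, for each $\kappa>\overline{\semireg}\,G(\bar x,\bar y)$ one obtains $\overline{\lopen}\,G(\bar x,\bar y)\ge 1/\kappa$, whence $\overline{\lopen}\,G(\bar x,\bar y)\cdot\kappa\ge 1$, and letting $\kappa\downarrow\overline{\semireg}\,G(\bar x,\bar y)$ gives the reverse inequality, again with the convention handling the case $\overline{\semireg}\,G(\bar x,\bar y)=\infty$.

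No real obstacle is anticipated. The only thing to be slightly careful with is the hypothesis in Proposition~\ref{propApproxRegularitiesEquiv} that $\gamma$ be ``not identically zero on $U$''; this is automatic here because on the singleton $U=\{\bar x\}$ we chose $\gamma(\bar x)=\gamma>0$. Once the translation of $(U,V,\gamma)$ is fixed, the entire argument is a transcription of the proof of Proposition~\ref{propApproxRegularityEquiv} with ``around $(\bar x,\bar y)$'' replaced by ``at $(\bar x,\bar y)$'' throughout.
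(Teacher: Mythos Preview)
Your proposal is correct and follows exactly the route the paper intends: the paper's proof consists solely of the remark that Proposition~\ref{propApproxRegularitiesEquiv} (applied with $U=\{\bar x\}$ and $V$ a ball around $\bar y$) yields the equivalence, and then one repeats the bookkeeping of Proposition~\ref{propApproxRegularityEquiv}. You have even spelled out the translation of $(U,V,\gamma)$ and the verification that $(\mathcal{O})$, $(\mathcal{R})$, $(\mathcal{L}^{-1})$ collapse to Definition~\ref{defApproxSemiregularity} more explicitly than the paper does.
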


\begin{remark}   \rm For a single-valued mapping $g: X \to Y$, we omit the point $\bar{y}=g(\bar{x})$ in all the preceding definitions; and if any of the  moduli is independent of the reference point, for example, in case of a linear operator acting in normed spaces, we omit $\bar{x}$ as well; that is, we write    $\overline{\sur} g(\bar{x})$, $\overline{\reg} \, g(\bar{x})$, etc.; or even   $\overline{\sur} g$, $\overline{\reg} \, g$, etc.
\end{remark}

\begin{remark} \label{rmkClosedBalls} \rm
The value of  $\overline{\sur}  \, G(\bar{x}, \bar{y})$ remains the same if, one replaces \eqref{defApproxOpen} by  
$\ball_Y[y,ct] \cap \ball_Y(\bar{y}, \gamma)  \subset \overline{G\big(\ball_X[x,t]\big)}.$
Of course, the definitions of  $\overline{\psopen} \, G(\bar{x}, \bar{y})$  and  $\overline{\lopen} \, G(\bar{x}, \bar{y})$ can be modified similarly. The use of the open balls provides the following advantage:
{\it Given a $c > 0$, if a~mapping $G: X \rightrightarrows Y$ has property $(\mathcal{O})$ on $U \times V$ for  $\gamma$ and {\tt each} $c' \in (0,c)$, then $G$ has property $(\mathcal{O})$ on $U\times V$ for  $\gamma$ and  $c$.}
\end{remark}

\section{Ioffe-type criteria} \label{scIoffeCriteria}

In this section, we focus on almost regularity  of a~single-valued mapping first. This allows us to display clearly the main ideas avoiding seemingly more general proofs for set-valued mappings popular nowadays. 

\begin{proposition}\label{propAlmostRegSubreg} Let $(X,d)$ and $(Y,\varrho)$ be metric spaces. Consider a $c> 0$, non-empty sets $U \subset X$ and $V \subset Y$, a function $\gamma: X \to [0, \infty]$  being  defined and not identically zero on $U$, and a mapping  $g: X \to Y$ defined on the whole $X$.  Assume that for each $y \in V$ there is a $\lambda = \lambda(y) \in (0,1)$ such that for each $u \in X$ satisfying 
\begin{equation}\label{eqAlmostConstraint}
0  < \varrho(g(u),y) \leq \varrho(g(x),y) -  c \, d(u,x)
 \quad \mbox{and}  \quad 
 \varrho(g(x),y) <   c \gamma(x)
\end{equation}
for some   $x \in U$, there is a point  $u'\in X$ 
such that 
\begin{equation}\label{eqAlmostBetter}
 \varrho(g(u'),y) + c\,d(u,u')   \leq \lambda \, \varrho(g(u),y).
\end{equation}
Then $g$ has property $(\mathcal{O})$ on $U \times V$ for $c$ and $\gamma$.
\end{proposition}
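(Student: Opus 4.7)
The plan is to reduce the statement to the approximate Ekeland principle (Corollary~\ref{corApproxEkeland}) applied to the function $\varphi(u):=\varrho(g(u),v)$ with the auxiliary quasi-distance $\eta := c\cdot d$, which obviously satisfies $(\mathcal{A}_2)$ and has $\eta(x,x)=0$. Fixing an arbitrary $x\in U$, $t\in(0,\gamma(x))$, and $v\in\ball_Y(g(x),ct)\cap V$, the goal is to show that for each sufficiently small $\delta>0$ there is a point $u\in\ball_X(x,t)$ with $\varphi(u)<\delta$; together with the trivial case $v=g(x)$ this yields $v\in\overline{g(\ball_X(x,t))}$, which is precisely property $(\mathcal{O})$ for single-valued $g$.

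After disposing of $v=g(x)$, fix $\delta\in(0,\varphi(x))$ and invoke the hypothesis with parameter $\varepsilon:=\delta/2$ to obtain the associated $\lambda\in(0,1]$. Apply Corollary~\ref{corApproxEkeland} to $\varphi$, $\eta$, the base point $x$, and with the Ekeland parameter $\lambda\delta/2$; this is admissible because $\lambda\delta/2\leq\delta/2<\varphi(x)<\infty$. The corollary produces a point $u\in X$ with
\[
  \varphi(u)+c\,d(u,x)\leq\varphi(x)
  \quad\text{and}\quad
  \varphi(u')+c\,d(u',u)>\varphi(u)-\tfrac{\lambda\delta}{2}\quad\text{for every }u'\in X.
\]
The first inequality gives $c\,d(u,x)\leq\varphi(x)<ct$, so $u\in\ball_X(x,t)$.

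To close the argument, suppose for contradiction that $\varphi(u)>\delta/2$. Then all three conditions in \eqref{eqAlmostConstraint} are met with the original $x\in U$ as the witness: $\delta/2<\varphi(u)$ by assumption, $\varphi(u)\leq\varphi(x)-c\,d(u,x)$ from the Ekeland descent, and $\varphi(x)<ct<c\gamma(x)$ from the choice of $v$ and $t$. The hypothesis therefore supplies a $u'\in X$ with $\varphi(u')+c\,d(u,u')\leq\varphi(u)-\lambda\delta/2$, which directly contradicts the strict Ekeland inequality above. Hence $\varphi(u)\leq\delta/2<\delta$, which is the desired approximate preimage. The main technical point is the calibration of constants between the two principles: Ioffe's hypothesis yields an improvement of exactly $\lambda\varepsilon$, while Ekeland only guarantees slack $-\varepsilon_0$, so the Ekeland parameter must be chosen strictly smaller than $\lambda\varepsilon$ (here $\lambda\delta/2$ against $\lambda\delta$) for the contradiction to bite; the factor $\tfrac12$ is also what separates $\varphi(u)\leq\delta/2$ from the strict bound $\varphi(u)<\delta$ needed for closure membership.
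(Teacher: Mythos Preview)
Your argument is correct and essentially identical to the paper's: apply Corollary~\ref{corApproxEkeland} to $\varphi=\varrho(g(\cdot),v)$ with $\eta=c\,d$ and Ekeland parameter $\lambda\varepsilon$ (in your notation $\lambda\delta/2$, since you set $\varepsilon=\delta/2$), then contradict the variational inequality via the hypothesis~\eqref{eqAlmostBetter}. One small inaccuracy in your closing commentary: with your choices the Ekeland parameter $\lambda\delta/2$ equals the hypothesis gain $\lambda\varepsilon=\lambda\delta/2$, it is not strictly smaller; the contradiction goes through because Corollary~\ref{corApproxEkeland} gives a \emph{strict} inequality $\varphi(u')+c\,d(u',u)>\varphi(u)-\lambda\delta/2$ against the non-strict bound from~\eqref{eqAlmostBetter}, not because of a gap in the constants.
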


\begin{proof} Let $x \in U$ and $t  \in (0,\gamma(x))$ be arbitrary. Pick any  $y \in \ball_Y(g(x),ct) \cap  V$. Find a $\lambda \in (0,1)$ such that the assumption containing \eqref{eqAlmostBetter} holds true.
Let $\varepsilon  >0 $ be arbitrary. We are about to find a $u \in \ball_X(x,t)$ such that $\varrho(g(u),y) \leq \varepsilon$. If $\varrho(g(x),y) \leq \varepsilon$, then $u:= x$ satisfies the conclusion. Assume that this is not the case.  
	As $(1-\lambda) \varepsilon < \varepsilon < \varrho (g({x}), y) < ct < \infty$, Corollary~\ref{corApproxEkeland}, with $\varphi:=\varrho (g(\cdot), y)$, $\eta := c d$,  and $\varepsilon := (1-\lambda)\varepsilon$, yields a point $u \in X$ such that $	\varrho(g(u),y)  + c \, d(u, x) \le \varrho(g({x}),y)  $
and
	\begin{equation} \label{eqApproxBetter10}
	  \varrho(g(u'),y) +  c \, d(u',u) > \varrho(g(u),y) - (1-\lambda) \varepsilon \quad \mbox{for every} \quad u' \in X.
	\end{equation}
	The first inequality implies that $ c \, d(u, {x}) \leq \varrho(g({x}),y) < c t$ $(< c \gamma(x)$). Thus  $u \in  \ball_X(x,t)$.   Assume that  $\varrho(g(u),y) >  \varepsilon$.
   Find a  $u' \in X$ satisfying \eqref{eqAlmostBetter}. Then   	
    \begin{eqnarray*}
       c\,d(u,u') & \overset{\eqref{eqAlmostBetter}}{\leq} &  \lambda \, \varrho(g(u),y)  -\varrho(g(u'),y) <   \varrho(g(u),y)  - (1-\lambda) \varepsilon  -\varrho(g(u'),y)  \\ 
      & \overset{\eqref{eqApproxBetter10}}{<} &  \varrho(g(u'),y) + c\,d(u',u) - \varrho(g(u'),y)  =  c\,d(u',u), 
     \end{eqnarray*}
  a~contradiction. Consequently, $\varrho(g(u),y) \leq  \varepsilon$. Hence $\dist \left(y, g\big(\ball_X(x,t)\big)\right) \leq \varrho(y,g(u)) \leq  \varepsilon$. Letting $\varepsilon \downarrow 0$, we conclude that $y \in \overline{g\big(\ball_X(x,t)\big)}$. Thus $\ball_Y(g(x),ct) \cap  V \subset \overline{g\big(\ball_X(x,t)\big)}$. 	
\end{proof}

\begin{remark} \label{rmkGamma} \rm We do not need any additional assumption on $\gamma$ except for the ones ensuring that property  
$(\mathcal{O})$ is meaningful. Usually, the constraint \eqref{eqAlmostConstraint} is stated in a weaker form. This excludes obtaining applicable conditions for (almost) semiregularity and, moreover, we need additional properties of $\gamma$. Assume that $\gamma$ is Lipschitz continuous  on $\widetilde{U}:= \cup_{x\in U} \ball_X(x, \gamma(x))$ with the constant $1$. Given $u \in X$, $x \in U$, and $y \in  V$ satisfying \eqref{eqAlmostConstraint}, we have 
$$
  \varrho(g(u),y) \leq  \varrho(g(x),y) -  c \, d(u,x) <  c \, (\gamma(x) -  d(u,x)) \leq c \gamma(u);
$$ 
so, in particular, $d(u,x) < \gamma(x)$. 
Thus the assumption containing \eqref{eqAlmostBetter} can be replaced by a less precise and stronger one: {\it Assume that there is a $\lambda \in (0,1)$ such that for each $u \in \widetilde{U}$ and each $y \in V$, with  $0 < \varrho(g(u),y) <  c \gamma(u)$, there is a point  $u'\in X$ satisfying \eqref{eqAlmostBetter}.} If $U$ is open and $\gamma(x) := \dist(x, X\setminus U)$, $x \in X$, then $\gamma$ is positive on $U$ and we arrive at \emph{Milyutin almost regularity}. 
\end{remark}

An approximate version of Proposition~\ref{propFabianPreiss} reads as:

\begin{corollary}\label{corAlmostFabianPreiss} Let $(X,d)$ and $(Y,\varrho)$ be metric spaces. Consider  $c> 0$ and $r > 0$, a~point $\bar{x} \in X$,  and a mapping  $g: X \to Y$ defined on the whole $X$.  Assume that  there is a $\lambda  \in (0,1)$ such that for each $u \in \ball_X(\bar{x},r)$ and each $y \in \ball_Y(g(\bar{x}),cr)$, with $ 0 < \varrho(g(u),y) < cr$,
there is a point  $u'\in X$ 
satisfying  \eqref{eqAlmostBetter}. Then $g$ has property $(\mathcal{O})$ on $\ball_X(\bar{x},r) \times \ball_Y(g(\bar{x}),cr)$ for $c$ and $\gamma(x):= r - d(x, \bar{x})$, $x \in \ball_X(\bar{x},r)$. In particular, $g$ is almost regular around $\bar{x}$ with the constant $c$. 
\end{corollary}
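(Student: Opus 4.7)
The plan is to apply Proposition~\ref{propAlmostRegSubreg} with $U := \ball_X(\bar{x}, r)$, $V := \ball_Y(g(\bar{x}), cr)$, and $\gamma$ as specified in the statement (extended by $0$ off $U$, though the extension is immaterial). Property~$(\mathcal{O})$ on $U \times V$ for $c$ and this variable $\gamma$ is the main conclusion; the ``in particular'' assertion then follows by restricting to a sufficiently small common-radius neighborhood of $(\bar{x}, g(\bar{x}))$.

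The core verification is that the hypothesis of Proposition~\ref{propAlmostRegSubreg} is implied by the hypothesis of the corollary. Fix $\varepsilon > 0$ and take the $\lambda \in (0,1]$ provided by the assumption of the corollary. Suppose $u \in X$ and $y \in V$ satisfy \eqref{eqAlmostConstraint} for some $x \in U$. A single triangle-inequality step puts $u$ back into $\ball_X(\bar{x}, r)$: from \eqref{eqAlmostConstraint} one reads off
$
  c\,d(u,x) \leq \varrho(g(x),y) < c\gamma(x) = c\bigl(r - d(x,\bar{x})\bigr),
$
hence $d(u,\bar{x}) \leq d(u,x) + d(x,\bar{x}) < r$. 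Chaining the same inequalities gives $\varrho(g(u),y) \leq \varrho(g(x),y) < cr$, while $\varepsilon < \varrho(g(u),y)$ is already part of \eqref{eqAlmostConstraint}. Since $y \in V$ by hypothesis, the assumption of the corollary now applies and yields a point $u' \in X$ satisfying \eqref{eqAlmostBetter}. Proposition~\ref{propAlmostRegSubreg} therefore delivers property $(\mathcal{O})$ on $U \times V$ for $c$ and $\gamma$.

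For the concluding claim that $g$ is almost regular around $\bar{x}$ with constant $c$, set $\tilde{\gamma} := \min\{r/2,\, cr\} > 0$. For any $x \in \ball_X(\bar{x}, \tilde{\gamma})$ one has $\gamma(x) = r - d(x, \bar{x}) > r/2 \geq \tilde{\gamma}$, so $(0, \tilde{\gamma}) \subset (0, \gamma(x))$ and moreover $\ball_Y(g(\bar{x}), \tilde{\gamma}) \subset V$. Plugging these into property $(\mathcal{O})$ yields Definition~\ref{defApproxRegularity}(i) verbatim with constant $c$ and neighborhood radius $\tilde{\gamma}$. There is no genuine analytic obstacle; the only delicate point is the bookkeeping needed to recognize that constraint \eqref{eqAlmostConstraint} already confines $u$ to $\ball_X(\bar{x},r)$ and the residual $\varrho(g(u),y)$ to the interval $(\varepsilon, cr)$ on which the corollary's assumption is phrased.
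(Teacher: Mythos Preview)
Your proof is correct and follows essentially the same route as the paper's: you apply Proposition~\ref{propAlmostRegSubreg} with $U=\ball_X(\bar{x},r)$, $V=\ball_Y(g(\bar{x}),cr)$, and the stated $\gamma$, verifying its hypothesis by the triangle-inequality computation that confines $u$ to $\ball_X(\bar{x},r)$ and $\varrho(g(u),y)$ to $(\varepsilon,cr)$, and then shrink to a common radius $\min\{r/2,cr\}$ for the ``in particular'' part. The paper does the same thing but packages the localization of $u$ via Remark~\ref{rmkGamma} (observing that $\gamma$ is $1$-Lipschitz and $\widetilde{U}=\ball_X(\bar{x},r)$) rather than writing the triangle inequality out directly; the two arguments are substantively identical.
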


\begin{proof}
Let $U:= \ball_X(\bar{x},r)$ and  $V:=\ball_Y(g(\bar{x}),cr)$.  Then $\widetilde{U}:=\cup_{x\in U} \ball_X(x, \gamma(x)) = \ball_X(\bar{x},r) = U$ and $\gamma(u) \leq r$ for each $u \in U$. In view of Remark~\ref{rmkGamma}, Proposition~\ref{propAlmostRegSubreg} implies the first conclusion. Let $\delta := \min\{r/2, cr\}$. For each $x \in \ball_X(\bar{x},\delta)$ and each $t \in (0, \delta)$ we have $t <  r/2 < r - d(x,\bar{x}) = \gamma(x)$ 
and, therefore,  
$$
\ball_Y(g(x),ct) \cap \ball_Y(g(\bar{x}), \delta)  \subset \ball_Y(g(x),ct) \cap \ball_Y(g(\bar{x}),cr) \subset  \overline{g\big(\ball_X(x,t)\big)}.
$$     
\end{proof}

\begin{remark} \label{rmkBetter} \rm 
Clearly,   \eqref{eqAlmostBetter} implies \eqref{eqBetter}.  Hence if, in addition, the space $X$ is complete and $g$ is continuous then we get full regularity, cf. Proposition~\ref{propFabianPreiss}. In \cite{IoffeBook}, \eqref{eqBetter} with a non-strict inequality is used under the additional assumption that the ``better'' point $u'$ is distinct from $u$. Note that \eqref{eqBetter} yields this automatically.    
\end{remark}
 
A statement for set-valued mappings,  \emph{equivalent} to Proposition~\ref{propAlmostRegSubreg}, reads as: 

\begin{proposition}\label{propAlmostRegSetvalued2} Let $(X,d)$ and $(Y,\varrho)$ be metric spaces. Consider  $c> 0$ and $\alpha \in (0,1/c)$, non-empty sets $U\subset X$ and $V\subset Y$, a function $\gamma: X\rightarrow [0,\infty]$ being defined and not identically zero on $U$, and a set-valued mapping  $G: X \rightrightarrows Y$. 
Assume that  each $y \in V$ there is a $\lambda = \lambda(y) \in (0, 1)$ such that for each $(u,v) \in \gph G$ satisfying 
$$
  0 < \varrho(v,y) \leq \varrho(z,y) -  c\,\max\{d(u,x), \alpha \varrho(v,z)\}
  \quad \mbox{and} \quad 
  \varrho(z,y) < c\gamma(x)
$$ 
for some $x \in U$ and $z \in G(x)$, there is a~pair  $(u',v') \in \gph G$ 
such that
		\begin{eqnarray}\label{eqAlmostBetterReg2}
			\varrho(v',y) + c\,\max\{d(u,u'), \alpha \varrho(v,v')\} \leq \lambda \, \varrho(v,y).
		\end{eqnarray}
Then $G$ has property $(\mathcal{O})$ on $U \times V$ for $c$ and $\gamma$.
\end{proposition}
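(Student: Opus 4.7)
The plan is to reduce this set-valued statement to the single-valued Proposition~\ref{propAlmostRegSubreg} via the projection trick recorded in Remark~\ref{rmkProjection}. Since $d$ and $\varrho$ are metrics and $\alpha>0$, the formula
\[\omega((u,v),(x,y)) := \max\{d(u,x),\,\alpha\varrho(v,y)\}\]
defines a metric on $X\times Y$ compatible with the product topology, and its restriction to $\gph G$ turns the latter into a metric space. Define $g:\gph G\to Y$ by $g(x,y):=y$, put $\widetilde U:=(U\times Y)\cap\gph G$, and let $\widetilde\gamma:\gph G\to[0,\infty]$ be $\widetilde\gamma(x,y):=\gamma(x)$. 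The hypothesis $\alpha<1/c$ is precisely the compatibility required by Remark~\ref{rmkProjection} in order to transfer property $(\mathcal{O})$ between $G$ and $g$.

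I would then apply Proposition~\ref{propAlmostRegSubreg} to the single-valued mapping $g$ on the metric space $(\gph G,\omega)$, with the sets $\widetilde U$ and $V$, the function $\widetilde\gamma$, and the constant $c$. Fix $\varepsilon>0$ and take $\lambda\in(0,1]$ as furnished by the hypothesis of Proposition~\ref{propAlmostRegSetvalued2}. Suppose $(u,v)\in\gph G$ and $y\in V$ satisfy
\[\varepsilon<\varrho(g(u,v),y)\leq\varrho(g(x,z),y)-c\,\omega((u,v),(x,z))\quad\text{and}\quad\varrho(g(x,z),y)<c\,\widetilde\gamma(x,z)\]
for some $(x,z)\in\widetilde U$. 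Since $g(u,v)=v$, $g(x,z)=z$, $\omega((u,v),(x,z))=\max\{d(u,x),\alpha\varrho(v,z)\}$, and $(x,z)\in\widetilde U$ means $x\in U$ with $z\in G(x)$, these inequalities coincide with the hypothesis of Proposition~\ref{propAlmostRegSetvalued2} for the pair $(u,v)$, the point $x\in U$, and the value $z\in G(x)$. That hypothesis produces $(u',v')\in\gph G$ with $c\,\max\{d(u,u'),\alpha\varrho(v,v')\}\leq\varrho(v,y)-\varrho(v',y)-\lambda\varepsilon$, which is
\[c\,\omega((u,v),(u',v'))\leq \varrho(g(u,v),y)-\varrho(g(u',v'),y)-\lambda\varepsilon,\]
exactly the inequality \eqref{eqAlmostBetter} required by Proposition~\ref{propAlmostRegSubreg}.

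Proposition~\ref{propAlmostRegSubreg} then yields that $g$ has property $(\mathcal{O})$ on $\widetilde U\times V$ for $c$ and $\widetilde\gamma$, and Remark~\ref{rmkProjection} translates this back into the desired statement that $G$ has property $(\mathcal{O})$ on $U\times V$ for $c$ and $\gamma$.

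The argument is essentially bookkeeping; the only substantive constraint is $\alpha c<1$, without which Remark~\ref{rmkProjection} would not apply. One could equally well avoid Remark~\ref{rmkProjection} and reprove Proposition~\ref{propAlmostRegSubreg} directly on $(\gph G,\omega)$: apply Corollary~\ref{corApproxEkeland} with $\varphi$ equal to the distance from $y$ to the second coordinate and $\eta:=c\,\omega$, then run the contradiction argument from the proof of Proposition~\ref{propAlmostRegSubreg} verbatim. No completeness of $\gph G$ is needed because Corollary~\ref{corApproxEkeland} does not assume $(\mathcal{A}_4)$, which is what makes the reduction work even when the graph is not closed.
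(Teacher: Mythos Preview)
Your proof is correct and follows exactly the same route as the paper: reduce to the single-valued Proposition~\ref{propAlmostRegSubreg} via the projection $g$ on $(\gph G,\omega)$ and then invoke Remark~\ref{rmkProjection}, with the paper merely calling the verification of hypotheses ``obvious'' where you spell it out. Your closing paragraph on the alternative direct application of Corollary~\ref{corApproxEkeland} is a nice observation but not part of the paper's argument.
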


\begin{proof} Let $\omega$, $g$,  and $\widetilde{\gamma}$ be as in Remark~\ref{rmkProjection}. In view of this remark, it suffices to observe that all the assumptions of Proposition~\ref{propAlmostRegSubreg}, with $(X,d) := (\gph G, \omega)$, $U := (U \times Y)\cap \gph G$, $\gamma := \widetilde{\gamma}$, are satisfied; which is obvious.
\end{proof}

A statement for set-valued mappings,  \emph{equivalent} to Corollary~\ref{corAlmostFabianPreiss}, reads as: 

\begin{corollary}\label{corAlmostRegSetvalued2} Let $(X,d)$ and $(Y,\varrho)$ be metric spaces. Consider  $c> 0$, $r > 0$,  and $\alpha \in (0,1/c)$, a point $(\bar{x}, \bar{y}) \in X \times Y$, and a set-valued mapping  $G: X \rightrightarrows Y$ with $\by \in G(\bx)$. 
Assume that there is a $\lambda \in (0, 1)$ such that  for each $u \in \ball_X(\bar{x},r)$, each $v  \in G(u) \cap \ball_Y(\bar{y},r/\alpha)$, and each $y \in \ball_Y(\bar{y},cr)$, with $0 < \varrho(v,y) <  cr $, there is a~pair  $(u',v') \in \gph G$ 
satisfying \eqref{eqAlmostBetterReg2}. 
Then $G$ has property $(\mathcal{O})$ on $\ball_X(\bar{x},r) \times \ball_Y(\bar{y},cr)$ for $c$ and  $\gamma(x):= r - d(x, \bar{x})$, $x \in \ball_X(\bar{x},r)$. In particular, $G$ is almost regular around $(\bar{x}, \bar{y})$ with the constant $c$. 
\end{corollary}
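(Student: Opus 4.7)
The plan is to follow the reduction already used in the proof of Proposition~\ref{propAlmostRegSetvalued2}: identify $G$ with its graph and pass to the canonical projection $g:(\gph G,\omega)\to(Y,\varrho)$, $g(x,y):=y$, with $\omega$ as in Remark~\ref{rmkProjection}. Since the present corollary stands to Proposition~\ref{propAlmostRegSetvalued2} exactly as Corollary~\ref{corAlmostFabianPreiss} stands to Proposition~\ref{propAlmostRegSubreg}, the natural step is to apply Corollary~\ref{corAlmostFabianPreiss} to this $g$ instead of routing through Proposition~\ref{propAlmostRegSubreg}.

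First, I would apply Corollary~\ref{corAlmostFabianPreiss} to $g$ with reference point $(\bar{x},\bar{y})$ and radius $r$. Because $\omega((u,v),(\bar{x},\bar{y}))=\max\{d(u,\bar{x}),\alpha\varrho(v,\bar{y})\}$, the open $\omega$-ball in $\gph G$ centered at $(\bar{x},\bar{y})$ of radius $r$ is precisely
\[
  \{(u,v)\in\gph G:\ u\in\ball_X(\bar{x},r),\ v\in\ball_Y(\bar{y},r/\alpha)\},
\]
and $\ball_Y(g(\bar{x},\bar{y}),cr)$ is just $\ball_Y(\bar{y},cr)$. Moreover $\varrho(g(u,v),y)=\varrho(v,y)$ and $c\,\omega((u,v),(u',v'))=c\max\{d(u,u'),\alpha\varrho(v,v')\}$, so the inequality \eqref{eqAlmostBetter} written for $g$ is literally \eqref{eqAlmostBetterReg2}. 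Under these identifications, the hypothesis of Corollary~\ref{corAlmostFabianPreiss} applied to $g$ is, word for word, the hypothesis assumed here.

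Second, I would transport the resulting property~$(\mathcal{O})$ for $g$ back to $G$ by repeating the ``on the other hand'' computation in Remark~\ref{rmkProjection}. Given $x\in\ball_X(\bar{x},r)$, $y\in G(x)$, $t\in(0,r-d(x,\bar{x}))$, and any $w\in\ball_Y(y,ct)\cap\ball_Y(\bar{y},cr)$, the $g$-statement produces, for each $\varepsilon>0$, a pair $(u,v)\in\gph G$ with $d(u,x)<t$, $\alpha\varrho(v,y)<t$, and $\varrho(v,w)<\varepsilon$; since $v\in G(u)\subset G(\ball_X(x,t))$ and $\varepsilon$ is arbitrary, this gives $w\in\overline{G(\ball_X(x,t))}$, which is property~$(\mathcal{O})$ for $G$ on $\ball_X(\bar{x},r)\times\ball_Y(\bar{y},cr)$ for $c$ and $\gamma(x):=r-d(x,\bar{x})$. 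The ``in particular'' clause then follows by the same choice as in the last lines of the proof of Corollary~\ref{corAlmostFabianPreiss}: set $\delta:=\min\{r/2,cr\}$ and observe that for every $x\in\ball_X(\bar{x},\delta)$ and every $t\in(0,\delta)$ one has $t<r/2\le r-d(x,\bar{x})=\gamma(x)$ and $\ball_Y(\bar{y},\delta)\subset\ball_Y(\bar{y},cr)$, so Definition~\ref{defApproxRegularity}(i) holds with constant $c$ on the $\delta$-neighborhood of $(\bar{x},\bar{y})$.

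The one genuinely delicate step---and what I expect to be the main obstacle---is the second paragraph: Corollary~\ref{corAlmostFabianPreiss} delivers a statement for starting points $(x,y)$ restricted to the $\omega$-ball and for $t<r-\omega((x,y),(\bar{x},\bar{y}))$, whereas the conclusion of Corollary~\ref{corAlmostRegSetvalued2} ranges over \emph{all} $y\in G(x)$ and over $t<r-d(x,\bar{x})$. This gap must be closed by noting that any $y\in G(x)$ for which the intersection $\ball_Y(y,ct)\cap\ball_Y(\bar{y},cr)$ is non-empty (so that the inclusion is non-vacuous) is automatically forced into the relevant $\omega$-box, and similarly that the range of admissible $t$ accommodates $t<r-d(x,\bar{x})$; this is the same bookkeeping that makes the ``on the other hand'' direction of Remark~\ref{rmkProjection} work.
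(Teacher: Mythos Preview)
Your approach is exactly the one the paper intends: Corollary~\ref{corAlmostRegSetvalued2} is stated without proof as the set-valued analogue ``equivalent'' to Corollary~\ref{corAlmostFabianPreiss}, so the implied argument is precisely your reduction via Remark~\ref{rmkProjection} to the projection $g$ on $(\gph G,\omega)$ followed by Corollary~\ref{corAlmostFabianPreiss}.

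The gap you flag in your last paragraph is real, and your proposed closure does \emph{not} work for the first conclusion in the full stated generality. If $w\in\ball_Y(y,ct)\cap\ball_Y(\bar y,cr)$, the triangle inequality only gives $\varrho(y,\bar y)<ct+cr<2cr$, hence $\alpha\varrho(y,\bar y)<2\alpha cr$; this is $<r$ (so that $(x,y)$ lies in the $\omega$-ball of radius $r$) only when $\alpha<1/(2c)$, not for every $\alpha\in(0,1/c)$. The same obstruction prevents you from matching the bound $t<r-\omega((x,y),(\bar x,\bar y))$ delivered by Corollary~\ref{corAlmostFabianPreiss} with the claimed $\gamma(x)=r-d(x,\bar x)$. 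So the ``same bookkeeping'' you invoke from Remark~\ref{rmkProjection} does not close the gap for the first sentence of the conclusion when $\alpha\ge 1/(2c)$. (This is harmless for the paper's applications: in Theorems~\ref{thmApproxL-Gset} and \ref{thmApproxL-Gsetset} one has $\alpha=1/(2c')$ with $c'>c$, so $\alpha<1/(2c)$ there; and only the ``in particular'' clause is actually used.)

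For the ``in particular'' clause your argument is essentially correct but your choice $\delta=\min\{r/2,cr\}$ is too generous for the same reason. Take instead any $\delta>0$ with $\delta\le\min\{cr,\,r/(1+\alpha(1+c))\}$. Then for $x\in\ball_X(\bar x,\delta)$, $y\in G(x)$, $t\in(0,\delta)$ and $w\in\ball_Y(y,ct)\cap\ball_Y(\bar y,\delta)$ one gets $\varrho(y,\bar y)<ct+\delta<(1+c)\delta$, whence $\alpha\varrho(y,\bar y)<\alpha(1+c)\delta\le r-\delta$; thus $(x,y)$ lies in the $\omega$-ball of radius $r$ and $t<\delta\le r-\max\{d(x,\bar x),\alpha\varrho(y,\bar y)\}$, so Corollary~\ref{corAlmostFabianPreiss} applied to $g$ yields $w\in\overline{g\big(\ball^{\omega}_{\gph G}((x,y),t)\big)}\subset\overline{G(\ball_X(x,t))}$, which is Definition~\ref{defApproxRegularity}(i) with constant $c$.
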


Another frequent choice is $\gamma \equiv r$, for some  $r > 0$. As in the case of regularity, if, in addition, $U \times V$ is a neighborhood of the reference point, then this third parameter $r$ can be neglected in exchange for a possibly smaller neighborhood, cf.  \cite[Exercise 2.12]{IoffeBook}. We illustrate this on another form of property $(\mathcal{R})$, equivalent to property $(\mathcal{O})$,  which have become popular in the last years: 

   \begin{remark} \label{rmkMetricRegularity}  \rm Given a point $(\bar{x},\bar{y})\in X\times Y$ and a  mapping $G:X\tto Y$ with $\bar{y}\in G(\bar{x})$, assume that  there are positive constants  $a$, $b$, $c$, and $r$ such that 
  \begin{equation} \label{defApproxMetricRegularityC}
  c \, \lim\nolimits_{\varepsilon\downarrow 0} 	 \dist\left(x,G^{-1}\big(\ball_Y(y,\varepsilon)\big)\right)\leq  \dist(y, G(x))
\end{equation}
for each $x \in\ball_X(\bx,a)$ and each $y \in \ball_Y(\bar{y},b)$ with  $\dist(y, G(x)) < c r$.   Let  $\beta:= \min\{a, b, cr/(1+c)\}$. Then \eqref{defApproxMetricRegularityC} holds for each $(x,y) \in\ball_X(\bx,\beta) \times \ball_Y(\bar{y},\beta)$.  
   	Indeed,  fix any such $(x,y)$.  Pick an arbitrary  $v\in G(x)$ (if there is any). Assume that $ \varrho(y,v)\geq cr$. As $\dist(y,G(\bar{x})) \leq \varrho (y, \bar{y}) < \beta < cr$ and $\beta \leq b$, \eqref{defApproxMetricRegularityC} with $x := \bar{x}$, implies that
   	\begin{eqnarray*}
   		c \, \lim\nolimits_{\varepsilon\downarrow 0} \dist\left(x,G^{-1}\big(\ball_Y(y,\varepsilon)\big)\right) & \leq &   c \, d(x,\bar{x})  + 	c \, \lim\nolimits_{\varepsilon\downarrow 0}  \dist\left(\bx,G^{-1}\big(\ball_Y(y,\varepsilon)\big)\right) \\
   		& \leq  & c \, d(x,\bar{x}) + \varrho(y,\bar{y}) <  c \beta + \beta   \leq  c  r  \leq  \varrho(y,v).
   	\end{eqnarray*}
   	If $\varrho(y,v)< cr$,  the same inequality follows from \eqref{defApproxMetricRegularityC} directly. As $v\in G(x)$ is arbitrary, we are done. 
  \end{remark}

\section{Perturbation stability}

Applying Corollary~\ref{corAlmostRegSetvalued2}, we get a local approximate Lyusternik-Graves theorem, cf. \cite[Theorem~5E.1]{DontchevBook}, in not necessarily complete spaces.

\begin{theorem} \label{thmApproxL-Gset}
	Let $(X,d)$ be a metric space and $(Y, \varrho)$ be a linear metric space with a~translation-invariant metric.  Consider a point $(\bx,\bz) \in X\times Y$, a set-valued mapping $F:X\tto Y$ with $\bz\in F(\bx)$, and a single-valued mapping $h: X \to Y$  defined and Lipschitz continuous around $\bx$.   
	Then  
	\begin{equation} \label{eqApproxSurfFset}
		\overline{\sur}(F+h)(\bx, \bz + h(\bx)) \geq  \overline{\sur}\,F(\bx, \bz) - \lip h (\bx).
	\end{equation}
\end{theorem}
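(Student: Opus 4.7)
The plan is to apply the set-valued Ioffe-type criterion of Corollary~\ref{corAlmostRegSetvalued2} to $G := F + h$ at $(\bx,\bz+h(\bx))$. The inequality is vacuous when $\overline{\sur}\,F(\bx,\bz) \le \lip h(\bx)$, so fix any $c'$ strictly below $\overline{\sur}\,F(\bx,\bz) - \lip h(\bx)$ and choose $\ell > \lip h(\bx)$ together with $c_0 < \overline{\sur}\,F(\bx,\bz)$ subject to $c_0 - \ell > c'$. For some common $\gamma > 0$, $h$ is Lipschitz with constant $\ell$ on $\ball_X(\bx,\gamma)$, and by Proposition~\ref{propApproxRegularityEquiv} $F$ enjoys property $(\mathcal{L}^{-1})$ with constant $1/c_0$ on $\ball_X(\bx,\gamma) \times \ball_Y(\bz,\gamma)$.

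Fix $\varepsilon > 0$, pick a small $\alpha \in (0, 1/c')$ to be constrained below, and a radius $r > 0$ small enough that all intermediate points below remain in the aforementioned neighborhoods. Given $u$, $v \in G(u)$, and $y$ as in the hypothesis of Corollary~\ref{corAlmostRegSetvalued2} with constants $c'$, $\alpha$, $r$, decompose $v = w + h(u)$ with $w \in F(u)$, and note via translation invariance that $\varrho(w, y-h(u)) = \varrho(v,y)$. Applying property $(\mathcal{L}^{-1})$ of $F$ at $(u,w) \in \gph F$ with target $y-h(u)$---in the sequential form of Remark~\ref{rmAlmostLip}---yields, for any prescribed $\delta > 0$, a point $u' \in X$ and a $w' \in F(u')$ with $d(u,u') \le \varrho(v,y)/c_0 + \delta$ and $\varrho(w', y - h(u)) < \delta$. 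Set $v' := w' + h(u') \in G(u')$.

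Translation invariance of $\varrho$ together with the Lipschitz bound on $h$ then gives
\[
 \varrho(v',y) \le \delta + \ell\, d(u,u') \quad \text{and} \quad \varrho(v,v') \le \varrho(v,y) + \delta + \ell\, d(u,u').
\]
Substituting the bound on $d(u,u')$ converts these into linear inequalities in $\varrho(v,y)$; the two required conditions $c' d(u,u') \le \varrho(v,y) - \varrho(v',y) - \lambda \varepsilon$ and $c'\alpha \varrho(v,v') \le \varrho(v,y) - \varrho(v',y) - \lambda \varepsilon$ reduce, respectively, to the positivity $c_0 - \ell - c' > 0$---already secured---and to the additional bound $\alpha < (c_0 - \ell)/[c'(c_0 + \ell)]$ that makes the coefficient of $\varrho(v,y)$ positive on the $\alpha\varrho(v,v')$-side. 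With $\lambda := (c_0 - \ell - c')/(2c_0)$ and $\delta$ small relative to $\varepsilon$ (using $\varrho(v,y) > \varepsilon$ to absorb the slack), both inequalities hold, whence Corollary~\ref{corAlmostRegSetvalued2} yields $\overline{\sur}(F+h)(\bx, \bz + h(\bx)) \ge c'$; letting $c' \uparrow \overline{\sur}\,F(\bx,\bz) - \lip h(\bx)$ finishes the argument. The main nuisance is the simultaneous calibration of $c_0,\ell,\alpha,\delta$ and the radii $\gamma,r$ so that every intermediate point stays in the valid region and the final coefficients retain the correct sign; once that bookkeeping is in place, the verification reduces to the elementary linear comparison above.
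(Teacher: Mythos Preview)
Your proposal is correct and follows essentially the same scheme as the paper: apply Corollary~\ref{corAlmostRegSetvalued2} to $G=F+h$, split $v=w+h(u)$ with $w\in F(u)$, use the almost regularity of $F$ at $(u,w)$ with target $y-h(u)$ to produce $(u',w')$, and set $v':=w'+h(u')$. The only cosmetic difference is that the paper invokes property $(\mathcal{O})$ of $F$ directly (with $t:=\varrho(v,y)/c'$ and the explicit choices $\alpha=1/(2c')$, $\lambda=(c'-c)/(2c')$), whereas you pass through the equivalent property $(\mathcal{L}^{-1})$ via Remark~\ref{rmAlmostLip}; the resulting linear bookkeeping is the same in spirit, and your constraints $c_0-\ell>c'$ and $\alpha<(c_0-\ell)/[c'(c_0+\ell)]$ play the role of the paper's choices.
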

\begin{proof}
	 Assume that $\overline{\sur}\,F(\bx, \bz) > \lip h (\bx)$ because otherwise there is nothing to prove. Fix constants  $c$, $c'$, and $\ell$ such that 
	$$
	\lip h(\bx) < \ell < c' < c< \overline{\sur}\,F(\bx, \bz).
	$$
	Find a $\gamma  > 0$ such that for  each $u \in \ball_X(\bx,\gamma)$,  each $w\in F(u)$, and each $t \in (0,\gamma)$ we have
	\begin{equation}
		\label{eqApproxLinOpennesSet}
		\ball_Y[w,ct] \cap  \ball_Y(\bz, \gamma) \subset \overline{F(\ball_X[u,t])};   \qquad   \qquad (\mbox{see Remark~\ref{rmkClosedBalls}}). 
	\end{equation}
		Pick an $r \in \big(0,  \min\{\gamma, \gamma/(3c)\}\big)$ such that 
	\begin{equation} \label{eqApproxSum3Set} 
		\varrho(h(u), h(u')) \leq \ell \, d(u,u') \quad \mbox{for all} \quad u, u' \in \ball_X(\bx, 2 r ).  
	\end{equation}
	Let $\alpha:=1/(2c)$ and 	$\lambda := (c + c')/(2c)$. Then $\lambda \in (0,1)$ and $(c'-\ell)\alpha<1$. Let $G := F + h$.  Pick arbitrary  $u \in \ball_X(\bx,r)$, 
	$v \in G(u) \cap \ball_Y(\bz+h(\bx),2cr)$, 
	and $y\in\ball_Y(\bz+h(\bx),(c'-\ell)r)$ with $0<\varrho(v,y)<(c'-\ell)r$. We shall find a pair $(u',v') \in \gph G$ such that 
	\begin{equation} \label{eqBetterPerturb} 
		\varrho(v', y)+(c' - \ell) \max \lbrace d(u,u'), 	\alpha \varrho(v, v')  \rbrace < \lambda \varrho (v, y). 
	\end{equation}
	Clearly,  $u\in\ball_X(\bx,\gamma)$. Let  $w:=v-h(u) \in F(u)$ and $t:=\varrho(v,y)/c$.
	Note that $0<t<(c'-\ell)r/c<r < \gamma$. Further, using \eqref{eqApproxSum3Set},  we get that  
	\begin{eqnarray*}
		\varrho(w,\bz) &=&\varrho(v-h(u)+h(\bx),\bz+h(\bx))\leq \varrho(v-h(u)+h(\bx),v)+\varrho(v,\bz+h(\bx))\\
		&=& \varrho(h(\bx),h(u))+\varrho(v,\bz+h(\bx)) \leq \varrho(h(\bx),h(u))+ \varrho(v,y) + \varrho(y,\bz+h(\bx)) \\
		&  <  & \ell r +  (c'-\ell) r + (c'-\ell) r < 2c'r.
	\end{eqnarray*}
	Thus  $y-h(u)\in\ball_Y[w,ct]\cap \ball_Y(\bz,\gamma)$ because $\varrho(y-h(u),w)=\varrho(y,v)=ct$ and
	\begin{eqnarray*}
		\varrho(y-h(u),\bz)\leq \varrho(y-h(u),w)+\varrho(w,\bz)<ct+ 2c'r< 3cr < \gamma.
	\end{eqnarray*}
	By  \eqref{eqApproxLinOpennesSet}, there are $u'\in\ball_X[u,t]$ and $w'\in F(u')$ such that $\varrho(y-h(u),w')<(c-c')t/2$. Then
	$d(u',\bx)\leq d(u',u)+d(u,\bx)<t +r<2r$.
	By \eqref{eqApproxSum3Set}, we have $\varrho(h(u),h(u'))\leq  \ell t$. Let $v':=w'+h(u') \in G(u')$. 
	Then 
	\begin{eqnarray}
		\varrho(v', y) & = & \varrho(w'+h(u'), y)  \leq \varrho(w'+h(u'),w' + h(u)) + \varrho(w'+ h(u),y) \notag\\
		&  = &  \varrho(h(u),h(u')) +  \varrho(y -  h(u),w') < \ell t + (c - c')t/2 \label{eqA001} \\
		& = &    (c+c')t/2  - (c' - \ell) t  = \lambda \,   \varrho (v, y) - (c' - \ell) t.  \label{eqA003}
	\end{eqnarray}
    Inequality in \eqref{eqA001} reveals  that 
	\begin{eqnarray*}
		\varrho(v, v')&\leq & \varrho(v,y)+ \varrho(y, v')   < c t + \ell t +(c-c')t/2 
		=  (3c/2 - c'/2 + \ell) t \\ 
		& < & (3c/2 + c'/2) t  < 2ct = t/\alpha.
	\end{eqnarray*}
    Hence $\alpha  \varrho(v, v') < t$; and as $d(u,u') \leq t$,  \eqref{eqA003} implies \eqref{eqBetterPerturb}.
    Corollary~\ref{corAlmostRegSetvalued2}, with  $c := c' - \ell$ and $\by := \bz + h(\bx)$,   yields that $\overline{\sur} \, G (\bx, \bz + h(\bx)) \geq c'- \ell$. Letting $c'\uparrow \overline{\sur} \, F(\bx, \bz)$ and $\ell \downarrow \lip h(\bx)$ we finish the proof.
\end{proof}

Theorem~\ref{thmApproxL-Gset} implies the following Graves-type result:
\begin{theorem} \label{thmApproxGraves}
Let $(X, \|\cdot\|)$ and $(Y, \|\cdot\|)$ be normed spaces. Consider a point $\bar{x} \in X$ and mappings $f$, $g: X \to Y$ both defined around $\bar{x}$ and such that $\lip(f-g)(\bar{x}) = 0$.  Then $\overline{\sur} f(\bx) = \overline{\sur}  g(\bx)$. 
\end{theorem}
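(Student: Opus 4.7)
The plan is to apply the approximate Lyusternik--Graves theorem (Theorem~\ref{thmApproxL-Gset}) twice, exploiting the symmetric roles of $f$ and $g$. Since $(Y,\|\cdot\|)$ is a normed space, the translation-invariant metric hypothesis of Theorem~\ref{thmApproxL-Gset} is automatic, and a single-valued mapping can be viewed as a set-valued mapping with singleton images so that the modulus $\overline{\sur}$ agrees with the one from Definition~\ref{defApproxRegularity} applied to $f$ (or $g$) directly.

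Set $h := f - g$. By hypothesis $\lip h(\bar{x}) = 0$, and since $\lip(-h)(\bar{x}) = \lip h(\bar{x})$ (the Lipschitz inequality is preserved under negation in a normed space), $\lip(-h)(\bar{x}) = 0$ as well. For the first inequality, take $F := g$ and perturb by $h$: Theorem~\ref{thmApproxL-Gset} gives
\begin{equation*}
 \overline{\sur}(g+h)\bigl(\bar{x},\, g(\bar{x}) + h(\bar{x})\bigr) \;\geq\; \overline{\sur}\, g(\bar{x}) - \lip h(\bar{x}) \;=\; \overline{\sur}\, g(\bar{x}).
\end{equation*}
Since $g + h = f$ and $g(\bar{x}) + h(\bar{x}) = f(\bar{x})$, this reads $\overline{\sur} f(\bar{x}) \geq \overline{\sur} g(\bar{x})$.

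For the reverse inequality, take $F := f$ and perturb by $-h$: Theorem~\ref{thmApproxL-Gset} yields
\begin{equation*}
 \overline{\sur}(f-h)\bigl(\bar{x},\, f(\bar{x}) - h(\bar{x})\bigr) \;\geq\; \overline{\sur}\, f(\bar{x}) - \lip(-h)(\bar{x}) \;=\; \overline{\sur}\, f(\bar{x}).
\end{equation*}
But $f - h = g$ and $f(\bar{x}) - h(\bar{x}) = g(\bar{x})$, so this gives $\overline{\sur} g(\bar{x}) \geq \overline{\sur} f(\bar{x})$. Combining the two bounds proves the asserted equality.

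There is essentially no hard step: the whole content has been packaged into Theorem~\ref{thmApproxL-Gset}. The only minor point to verify is that $\lip(-h)(\bar{x}) = \lip h(\bar{x})$, which is immediate in a normed space. One should also note that, although $\overline{\sur} f(\bar{x})$ or $\overline{\sur} g(\bar{x})$ could be $0$ or $+\infty$, the two displayed inequalities remain meaningful under the paper's convention $0\cdot\infty = 1$ and the inclusion of the cases treated at the very beginning of the proof of Theorem~\ref{thmApproxL-Gset} (where there is nothing to prove when the right-hand side is non-positive).
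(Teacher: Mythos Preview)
Your proof is correct and follows essentially the same approach as the paper: both apply Theorem~\ref{thmApproxL-Gset} twice with the roles of $f$ and $g$ interchanged, using $h = f - g$ (respectively $g - f$) as the perturbation and the hypothesis $\lip(f-g)(\bar{x}) = 0$ to conclude the two opposite inequalities. The paper's version is slightly terser, but there is no substantive difference.
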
  

\begin{proof}   
 Theorem~\ref{thmApproxL-Gset}, with $F := f$ and $h:=g-f$; and $F := g$ and $h := f - g$, respectively, implies that $\overline{\sur} g(\bx)  \geq \overline{\sur} f(\bar{x}) -  \lip(g-f)(\bar{x}) = \overline{\sur} f(\bar{x})$; and $\overline{\sur} f(\bar{x}) \geq \overline{\sur} g(\bx) - \lip(f-g)(\bar{x}) = \overline{\sur} g(\bx)$.  
\end{proof}  

Now, we consider set-valued perturbations as in \cite[Theorems 3.1 and 3.10]{CR2023}. In contrast to \cite{CR2023},  we employ neither the epigraphical multifunction nor the distance to the graph of the sum in the proof.

\begin{theorem} \label{thmApproxL-Gsetset}
	Let $(X,d)$ be a metric space and $(Y,\varrho)$ be a linear metric space with a translation-invariant metric. Consider  $ c$, $\ell \in (0, \infty)$ such that  $ \ell < c$ and  $a$, $b$, $r$, $\delta \in (0,\infty]$,   a point $(\bar{x}, \bar{z}, \bar{w}) \in X \times Y \times Y$, and set-valued mappings $F$, $H:X\tto Y$ with $\bar{z}\in F(\bar{x})$ and $\bar{w} \in H(\bar{x})$. Assume that 
	\begin{itemize}
		\item[$(A)$] 
		$\ball_Y[z, c t] \cap\ball_Y(\bz, c (a   +  2r) + b+\delta) \subset  \overline{F(\ball_X[u,t])}$ for each $u \in \ball_{{X}}(\bx, a + r)$, each $z \in F(u) \cap \ball_{Y}(\bz, c(a+r) + b + \delta)$ 
		and each $t\in (0, r)$;
		\item[$(B)$]  $H(u) \cap \ball_Y(\bar{w}, (a + r) \ell+\delta) \subset H(u') +  \ball_Y[0,\ell \, d(u,u')]$  for each $u$, $u' \in \ball_{{X}}(\bar{x}, a + 2r)$; 
		\item[$(C)$] for each $u \in \ball_{{X}}(\bar{x}, a+r)$ and each $v \in (F+H)(u) \cap \ball_{{Y}}(\bar{z} + \bar{w}, b + cr  )$ there is a $w \in H(u) \cap \ball_Y(\bar{w},(a + r) \ell + \delta)$ such that $v \in F(u) + w$.
	\end{itemize}
   Then $F+H$ has property $(\mathcal{O})$ on $\ball_{{X}}(\bx,a)\times \ball_{Y}(\bz+\bw, b)$ for $c-\ell$ and $\gamma \equiv r$.
\end{theorem}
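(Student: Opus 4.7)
The plan is to verify the hypothesis of Proposition~\ref{propAlmostRegSetvalued2} for $G:=F+H$ with $U:=\ball_X(\bx,a)$, $V:=\ball_Y(\bz+\bw,b)$, $\gamma\equiv r$, opening constant $c-\ell$, and $\alpha:=1/(2c')$, choosing $\lambda:=(c'-c)/(2c')\in(0,1]$. This mimics the strategy used in the proof of Theorem~\ref{thmApproxL-Gset}, but now the single-valued perturbation is replaced by a set-valued one, so in the induction step one selection from $H$ must be chosen at the starting iterate via $(C)$ and then transported to the next iterate via the Aubin property $(B)$.

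Fix $\varepsilon\in(0,(c-\ell)r)$ and consider $(u,v)\in\gph G$, $y\in V$ together with $x\in U$, $z\in G(x)$ satisfying the hypothesis of Proposition~\ref{propAlmostRegSetvalued2} at level $c-\ell$ and $\gamma\equiv r$. From $\varrho(z,y)<(c-\ell)r$ and $(c-\ell)d(u,x)\le\varrho(z,y)$ one gets $d(u,\bx)\le d(u,x)+d(x,\bx)<a+r$, and a parallel estimate places $v$ in $\ball_Y(\bz+\bw,b+(c-\ell)r)\subset\ball_Y(\bz+\bw,b+cr)$. Thus $(C)$ applies and yields a splitting $v=z_F+w$ with $z_F\in F(u)$ and $w\in H(u)\cap\ball_Y(\bw,(a+r)\ell+\delta)$. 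Moreover, $\varrho(z_F,\bz)\le\varrho(v,\bz+\bw)+\varrho(w,\bw)<b+(c-\ell)r+(a+r)\ell+\delta\le c(a+r)+b+\delta$, so $(A)$ is applicable at $u$ with value $z_F$.

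Set $t:=\varrho(v,y)/c'<(c-\ell)r/c'<r$. Since $\varrho(y-w,z_F)=\varrho(y,v)=c't$ and
$\varrho(y-w,\bz)\le c't+\varrho(z_F,\bz)<c'r+c(a+r)+b+\delta\le c(a+2r)+b+\delta$,
one has $y-w\in\ball_Y[z_F,c't]\cap\ball_Y(\bz,c(a+2r)+b+\delta)$. Hypothesis $(A)$ produces $u'\in\ball_X[u,t]$ and $z'_F\in F(u')$ with $\varrho(y-w,z'_F)<(c'-c)t/2$. Because $d(u',\bx)<a+2r$ and $w\in\ball_Y(\bw,(a+r)\ell+\delta)$, hypothesis $(B)$ yields $w'\in H(u')$ with $\varrho(w,w')\le\ell\,d(u,u')\le\ell t$. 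Put $v':=z'_F+w'\in G(u')$.

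The final estimate is essentially the one occurring in Theorem~\ref{thmApproxL-Gset}: by the triangle inequality
\begin{equation*}
 \varrho(v',y)\le\varrho(w',w)+\varrho(z'_F,y-w)<\ell t+(c'-c)t/2=c't-(c-\ell)t-(c'-c)t/2,
\end{equation*}
and, using $c't=\varrho(v,y)>\varepsilon$ to bound $(c'-c)t/2>\lambda\varepsilon$, one arrives at $\varrho(v',y)<\varrho(v,y)-(c-\ell)t-\lambda\varepsilon$. The same calculation shows $\varrho(v,v')<2c't=t/\alpha$, so $\max\{d(u,u'),\alpha\varrho(v,v')\}\le t$, whence \eqref{eqAlmostBetterReg2} with constant $c-\ell$ holds. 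Proposition~\ref{propAlmostRegSetvalued2} then gives property $(\mathcal{O})$ on $U\times V$ for $c-\ell$ and $\gamma\equiv r$, as required.

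The main obstacle I expect is the bookkeeping of the membership constraints: hypotheses $(A)$, $(B)$, $(C)$ require the various iterates and auxiliary points to lie in prescribed balls whose radii $a$, $a+r$, $a+2r$, $b$, $c(a+r)+b+\delta$, $c(a+2r)+b+\delta$, $(a+r)\ell+\delta$ are carefully calibrated. Verifying each of these inclusions (in particular for $y-w$, $z_F$, $u'$, and $w$) without wasting the margin provided by $\delta$ is the delicate part; everything else is a direct transcription of the Lyusternik–Graves argument.
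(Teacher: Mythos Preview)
Your proof is correct and follows essentially the same route as the paper's, with the same choices $\alpha=1/(2c')$, $\lambda=(c'-c)/(2c')$, $t=\varrho(v,y)/c'$, and the same sequential use of $(C)$, $(A)$, $(B)$. One small slip to fix: in the line $\varrho(y-w,\bz)\le c't+\varrho(z_F,\bz)<c'r+c(a+r)+b+\delta\le c(a+2r)+b+\delta$ the last inequality would require $c'r\le cr$, which is false; instead use the bound $c't=\varrho(v,y)<(c-\ell)r<cr$ that you have already recorded, giving $\varrho(y-w,\bz)<cr+c(a+r)+b+\delta=c(a+2r)+b+\delta$ as needed.
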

\begin{proof} Pick an arbitrary $c' \in (\ell,c)$.  
 	Let $\alpha:=1/(2c)$ and $\lambda:=(c+c')/(2c)$. Then $\lambda \in (0,1)$ and $(c'-\ell)\alpha<1$. Let $G := F + H$.  Pick arbitrary  $u \in \ball_X(\bx,a + r)$, 
	$v \in G(u)$, and $y\in\ball_Y(\bz+\bar{w},b)$ with 
$0<\varrho(v,y)<(c'-\ell)r$. We shall find a pair $(u',v') \in \gph G$ such that \eqref{eqBetterPerturb} holds true.

 	As $\varrho(v, \bar{z} + \bar{w}) \leq \varrho(v,y) + \varrho(y, \bar{z} + \bar{w}) < (c' - \ell) r + b $, using (C), there is a~$w \in H(u) \cap \ball_Y(\bar{w},(a + r) \ell+\delta)$ such that $v \in F(u) + w$. 
Let  $z:=v-w \in F(u)$ and $t:=\varrho(v,y)/c$.
	Note that $0<t<(c'-\ell)r/c\leq r$  and 
	\begin{eqnarray*}
	 \varrho(z, \bz) &=& \varrho(v-w,\bz) \leq  \varrho(v-w,v-\bar{w})+\varrho(v-\bar{w},\bz) =
	  \varrho(w,\bar{w})+\varrho(v,\bz+\bw) \\
	  &<& (a + r) \ell +\delta + (c'-\ell) r + b \leq c(a + r)  + b +\delta.
	\end{eqnarray*}	
	Thus $y - w \in \ball_Y[z,ct]\cap \ball_Y(\bz,c (a   +  2r) + b+\delta)$ because   
$\varrho(y - w,z) = \varrho(y, v) = ct $
and
$$
\varrho(y-w,\bz)\leq \varrho(y-w,z)+\varrho(z,\bz) < ct +  c (a   +  r) + b +\delta \leq c (a   +  2r) + b+\delta.
$$
By (A), there is a $u^{\prime }\in \ball_X[u,t]$ and $z'\in F(u')$ such that $\varrho(y - w,z')<(c-c')t/2$. So 
$d(u^{\prime }, \bx)  \leq d(u^{\prime },u)  + d(u, \bx)  < t+  a+r \leq a+2r$.
As $w\in H(u) \cap \ball_Y(\bar{w},(a + r) \ell+\delta)$, using {\rm (B)}, we find a $w'\in H(u^{\prime })$ such that 
$\varrho (w,  w')  \leq \ell \,  d(u,u') \leq \ell t$. 
Let $v^{\prime }:=z' +w' \in  G(u')$.
 Then
\begin{eqnarray}
	\varrho(v', y)  & = &  \varrho(z' + w', y)\leq \varrho(z' + w',z'+w) +\varrho(z'+w,y) \notag \\
	&=&   \varrho(w,w')+ \varrho(y-w, z') < \ell t+ (c-c')t/2 
	  =   (c+c')t/2  - (c' - \ell) t  \label{eqB001} \\
	  &  = & \lambda \,   \varrho (v, y)- (c' - \ell) t. \label{eqB003}
\end{eqnarray}
Inequality in \eqref{eqB001} reveals  that 
\begin{eqnarray*}
	\varrho(v,v')  & \leq & \varrho(v,y) +\varrho(y, v') <  ct+ \ell t+ (c-c')t/2 =  (3c/2 - c'/2 + \ell) t \\
	&<& (3c/2 + c'/2) t  < 2ct = t/\alpha.
\end{eqnarray*}
Hence $\alpha  \varrho(v, v') < t$; and as $d(u,u') \leq t$,  \eqref{eqB003} implies \eqref{eqBetterPerturb}.

Proposition~\ref{propAlmostRegSetvalued2}, with $c := c' - \ell$, $U:=\ball_{{X}}(\bx,a)$, $V:= \ball_{Y}(\bz+\bw, b)$, and $\gamma \equiv r$,  implies the conclusion, cf. Remark~\ref{rmkGamma} and Remark~\ref{rmkClosedBalls}.
\end{proof}

The above statement covers approximate versions of several results. First, we consider global Lyusternik-Graves theorem, see \cite[Theorem 5I.2]{DontchevBook}, \cite[Corollaries 3.2 and 3.11]{CR2023}, or \cite[Corollary 3.4]{HT08}: 
\begin{corollary} \label{corPerturbGlobal}
	Let $(X,d)$ be a  metric space and $(Y, \varrho)$ be a linear metric space with a~translation-invariant metric. Consider $c$, $\ell \in (0, \infty)$ such that  $\ell <  c$, an $r \in (0, \infty]$, and set-valued mappings $F$, $H:X\rightrightarrows Y$. Assume that $F$ has property $(\mathcal{O})$ on $X \times Y$ for $c$ and $\gamma \equiv r$ and that $H$ is Hausdorff-Lipschitz on $X$ with the constant $\ell$. Then the mapping  $F+H$ has property  $(\mathcal{O})$ on $X \times Y$ for $c - \ell$ and $\gamma \equiv r$.
\end{corollary}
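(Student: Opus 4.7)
The plan is to reduce the corollary to Theorem~\ref{thmApproxL-Gsetset} by choosing the local-scope parameters $a$, $b$, $\delta$ to be infinite. Fix $c \in (\ell, c')$ and pick any intermediate $c'' \in (c, c')$. Given an arbitrary pair $(\bar{x}, \bar{y}) \in \gph (F+H)$, write $\bar{y} = \bar{z} + \bar{w}$ with $\bar{z} \in F(\bar{x})$ and $\bar{w} \in H(\bar{x})$; then apply Theorem~\ref{thmApproxL-Gsetset} with this reference triple, with the constants $c$, $c''$, $\ell$ (in the roles of $c$, $c'$, $\ell$ there), with the same $r$, and with $a = b = \delta = \infty$. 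Under these choices the product balls $\ball_X(\bar{x}, a) \times \ball_Y(\bar{z}+\bar{w}, b)$ coincide with $X \times Y$, so the conclusion of the theorem is precisely that $F+H$ has property $(\mathcal{O})$ on $X \times Y$ for $c - \ell$ and $\gamma \equiv r$. Since $(\bar{x}, \bar{y})$ is arbitrary, this gives the required global statement.

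What remains is to verify hypotheses (A), (B), (C) in this degenerate parameter regime. For (A), with $a = b = \delta = \infty$, every intersection with $\ball_Y(\bar{z}, c(a+r)+b+\delta)$ or $\ball_Y(\bar{z}, c(a+2r)+b+\delta)$ becomes an intersection with $Y$ and thus disappears, and the requirement reduces to $\ball_Y[z, c''t] \subset \overline{F(\ball_X[u,t])}$ for every $u \in X$, $z \in F(u)$, and $t \in (0, r)$. This closed-ball inclusion follows from the assumed open-ball inclusion at the larger rate $c'$ via
\[
  \ball_Y[z, c''t] \subset \ball_Y(z, c't) \subset \overline{F(\ball_X(u,t))} \subset \overline{F(\ball_X[u,t])},
\]
where the strict inclusion uses $c'' < c'$ and $t > 0$. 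For (B), the ball $\ball_Y(\bar{w}, (a+r)\ell + \delta)$ is all of $Y$, so the requirement collapses to $H(u) \subset H(u') + \ball_Y[0, \ell\, d(u,u')]$ for all $u, u' \in X$, which is exactly the Hausdorff-Lipschitz property of $H$ on $X$. For (C), the ball $\ball_Y(\bar{z}+\bar{w}, b+cr)$ is $Y$, so one only needs that every $v \in (F+H)(u)$ can be written as $v = z + w$ with $z \in F(u)$ and $w \in H(u)$; this is immediate from the definition of the sum of set-valued mappings.

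The only point that is not purely mechanical is the passage from open-ball to closed-ball openness of $F$, which is handled by trading a little of the rate (from $c'$ down to $c''$); this is affordable because we have the whole interval $(\ell, c')$ at our disposal when choosing $c$ and $c''$. Everything else is a direct specialization of Theorem~\ref{thmApproxL-Gsetset} to the extremal values $a = b = \delta = \infty$.
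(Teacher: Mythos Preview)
Your proof is correct and follows the same overall strategy as the paper---reduce to Theorem~\ref{thmApproxL-Gsetset}---but your parameter choices are cleaner than the paper's. The paper fixes a \emph{finite} $a>0$ (and lets it be arbitrary at the end) together with $b=\infty$ and $\delta=\diam H(\bar x)$; this forces a genuine verification of (C) using the Hausdorff--Lipschitz property of $H$ to land the decomposition point $w$ inside $\ball_Y(\bar w,(a+r)\ell+\delta)$. By taking $a=b=\delta=\infty$ you make (B) and (C) collapse to tautologies, which is perfectly legitimate since Theorem~\ref{thmApproxL-Gsetset} explicitly allows $a,b,\delta\in(0,\infty]$. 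Your introduction of the intermediate rate $c''\in(c,c')$ to obtain $\ball_Y[z,c''t]\subset\ball_Y(z,c't)$ is also a tidy way to pass from the open-ball hypothesis to the closed-ball requirement in (A); the paper instead asserts $\ball_Y[z,c't]\subset\overline{F(\ball_X[u,t])}$ directly from $\ball_Y(z,c't)\subset\overline{F(\ball_X[u,t])}$, which implicitly relies on the closed ball being contained in the closure of the open ball---your trade-off of rate avoids that issue entirely. One small remark: once you have established property $(\mathcal{O})$ on $X\times Y$ for \emph{one} choice of $(\bar x,\bar y)$, the conclusion is already global, so the sentence ``since $(\bar x,\bar y)$ is arbitrary'' is superfluous (and you should note the case $\gph(F+H)=\emptyset$ is vacuous).
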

\begin{proof}  Fix any $(\bx,\bv)\in \gph\,(F+H)$ and any $a \in (0, \infty)$. Let $b : = \infty$ and $\delta :=\diam\, H(\bx)$.  Find $\bz\in F(\bx)$ and $\bw\in H(\bx)$ such that $\bv=\bz+\bw$. Clearly, assumption (B) holds true.  Given arbitrary $u \in X$,  $z \in F(u)$, and $t \in (0, r)$, we have  $\ball_Y(z, c t) \subset  \overline{F(\ball_X(u,t))} \subset \overline{F(\ball_X[u,t])}$; hence $\ball_Y[z, c t] \subset  \overline{F(\ball_X[u,t])}$. Therefore (A) holds true. To show (C), fix any $u\in \ball_{{X}}(\bx, a + r)$ and $v\in (F+H)(u)$. There is a $w\in H(u)$ such that $v\in F(u)+w$.  Find $\bar{w}'\in H(\bx)$ such that $\varrho(w,\bar{w}')\leq \ell d(u,\bx)<\ell (a + r)$. 
	Then
	\begin{equation} \label{eqEstDiam}
	\varrho(w,\bar{w})\leq	\varrho(w,\bar{w}')+	\varrho(\bar{w}',\bar{w})< \ell (a + r) +  \delta;
	\end{equation}
	therefore {\rm (C)} holds.
Theorem~\ref{thmApproxL-Gsetset} implies that $F+H$ has property  $(\mathcal{O})$ on $ \ball_{{X}}(\bx, a) \times Y$ for $c - \ell$ and $\gamma \equiv r$; where $a > 0$  is arbitrary.
\end{proof}

\begin{remark} \label{rmkGlobal} \rm  Let the assumptions of Corollary~\ref{corPerturbGlobal}, with $r = \infty$, be satisfied. Let $\kappa := 1/c$. Then, in view of Proposition~\ref{propApproxRegularitiesEquiv} and Remark~\ref{rmAlmostRegularSeq},   
 for each $x \in X$, each $y \in Y$, and each $\varepsilon > 0$ there is a~sequence $(y_k)_{k \in \mathbb{N}}$ converging to $y$ such that 
$$
  	\dist\big(x,(F+H)^{-1}(y_k)\big)\leq \frac{\kappa}{1-\kappa \ell}  \, \big(\dist(y, (F+H)(x)) + \varepsilon\big) 
  	\quad \mbox{for each} \quad k \in \mathbb{N}.
$$
Hence we get the conclusion in \cite[Theorem 13]{ABB2020}, where $F$ is assumed to be globally regular (instead of \emph{almost} globally regular).
\end{remark}
 
Second, we derive approximate analogues of \cite[Corollaries 3.4 and 3.13]{CR2023} going back to \cite[Theorem 3.2]{ACN15}:

\begin{corollary}
	Let $(X,d)$ be a metric space and $(Y,\varrho)$ be a linear metric space with a translation-invariant metric. Consider positive $a$, $b$, $c$, and $\ell$ such that  $ \ell < c$,  a point $(\bar{x}, \bar{z}, \bar{w}) \in X \times Y \times Y$, and set-valued mappings $F$, $H:X\tto Y$ with $\bar{z}\in F(\bar{x})$ and $\bar{w} \in H(\bar{x})$. Assume that 
	$F$  has property $(\mathcal{O})$ on  $\ball_{{X}}(\bx,a)\times\ball_{Y}(\bz, b)$ for $c$ and $\gamma \equiv \infty$, that  $H$ is Hausdorff-Lipschitz on $\ball_{{X}}(\bar{x}, a)$ with the constant $\ell$, and that $\diam\,H(\bx)< b$. Then for each $\beta  > 0$ such that
	\begin{eqnarray}
		\label{eqSetConstants}
\quad  \beta (3+2/(c-\ell)) <a 
   \ \mbox{and} \  \beta(3c + 2c/(c - \ell)+1) +  \diam\,H(\bx)< b,
	\end{eqnarray}
	 the mapping $F+H$ has property  $(\mathcal{O})$ on $ \ball_{{X}}(\bx, \beta) \times \ball_{Y}(\bz+\bw, \beta)$ for $c - \ell$ and $\gamma \equiv \infty$.
\end{corollary}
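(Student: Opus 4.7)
The plan is to apply Theorem~\ref{thmApproxL-Gsetset} with parameters tailored to the corollary's local setting. Fix $c \in (\ell, c')$ and admissible $\beta > 0$. I set the theorem's parameters as $a_T := \beta$, $b_T := \beta$, $\delta_T := \diam H(\bar x)$, and choose $r_T > 0$ subject to $\beta + 2 r_T < a$ and $c(\beta + 2 r_T) + \beta + \diam H(\bar x) < b$; the corollary's hypotheses $\beta(3 + 2/(c-\ell)) < a$ and $3\beta(c+1) + \diam H(\bar x) < b$ guarantee a non-empty range of admissible $r_T$.

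With these choices, conditions (A)--(C) of Theorem~\ref{thmApproxL-Gsetset} fall out cleanly. Condition (A) follows from $F$'s openness on $\ball_X(\bar x, a) \times \ball_Y(\bar z, b)$ combined with the inclusions $\beta + r_T < a$ (placing the probe point in $F$'s $X$-domain) and $c(\beta + 2 r_T) + \beta + \delta_T < b$ (placing the target $Y$-ball in $F$'s $Y$-domain). Condition (B) is the Hausdorff-Lipschitz hypothesis restricted to $\ball_X(\bar x, \beta + 2 r_T) \subset \ball_X(\bar x, a)$. For (C), given $u \in \ball_X(\bar x, \beta + r_T)$ and any $w \in H(u)$, Hausdorff-Lipschitz between $u$ and $\bar x$ supplies $w_0 \in H(\bar x)$ with $\varrho(w, w_0) \leq \ell \, d(u, \bar x) < \ell(\beta + r_T)$, hence $\varrho(w, \bar w) < \ell(\beta + r_T) + \diam H(\bar x)$, placing $w$ in the required ball and yielding the decomposition $v = (v - w) + w \in F(u) + w$ for any $v \in (F+H)(u)$ close to $\bar z + \bar w$.

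Theorem~\ref{thmApproxL-Gsetset} then delivers property~$(\mathcal{O})$ on $\ball_X(\bar x, \beta) \times \ball_Y(\bar z + \bar w, \beta)$ for $c - \ell$ with $\gamma \equiv r_T$. To upgrade to $\gamma \equiv \infty$, I split on whether $\varrho(y, y') < (c-\ell) r_T$ for the reference points $y \in (F+H)(x)$ and $y' \in \ball_Y(y, (c-\ell) t) \cap \ball_Y(\bar z + \bar w, \beta)$: if yes, I replace $t$ by some $t^* \in \bigl(\varrho(y, y')/(c-\ell), \min\{r_T, t\}\bigr)$ and apply the just-established property directly; if no, I use the anchor $(\bar x, \bar z + \bar w) \in \gph (F+H)$ with an intermediate step $s \in (\beta/(c-\ell), r_T)$ so that $\ball_Y(\bar z + \bar w, \beta) \subset \overline{(F+H)(\ball_X(\bar x, s))}$, and then relocate via $\ball_X(\bar x, s) \subset \ball_X(x, s + \beta) \subset \ball_X(x, t)$. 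The main obstacle is the arithmetic behind this last step: it forces $r_T > \beta/(c-\ell)$ on top of the (A)/(B) constraints, and one needs to check that the coefficients $3 + 2/(c-\ell)$ and $3(c+1)$ in the corollary's hypotheses are tuned so that the admissible interval for $r_T$ is simultaneously non-empty.
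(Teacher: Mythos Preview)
Your plan is essentially the paper's: apply Theorem~\ref{thmApproxL-Gsetset} with $a_T=b_T=\beta$ and $\delta_T=\diam H(\bar x)$, verify (A)--(C) exactly as you outline, obtain property $(\mathcal{O})$ for $c-\ell$ with $\gamma\equiv r_T$, and then remove the bound on $\gamma$. The only presentational difference is that the paper does the upgrade by passing to property $(\mathcal{R})$ via Proposition~\ref{propApproxRegularitiesEquiv} and invoking Remark~\ref{rmkMetricRegularity}, whereas you reprove that remark by hand on the openness side with your yes/no split.

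There is a gap in your ``no'' case, though. You need $s\in(\beta/(c-\ell),r_T)$ \emph{and} $s+\beta\le t$ for the chain $\ball_X(\bar x,s)\subset\ball_X(x,s+\beta)\subset\ball_X(x,t)$ to hold; but in that case you only know $t>r_T$, so for $t$ just above $r_T$ the second constraint forces $s\le t-\beta$, which can drop below $\beta/(c-\ell)$. The condition $r_T>\beta/(c-\ell)$ that you single out is therefore not enough: what you actually need is $r_T\ge\beta\bigl(1+1/(c-\ell)\bigr)=(1+c-\ell)\beta/(c-\ell)$, which is precisely the paper's explicit choice of $r$. With this value one has $\beta+2r=\beta(3+2/(c-\ell))$, matching the first inequality in \eqref{eqSetConstants} exactly, and the upgrade via Remark~\ref{rmkMetricRegularity} becomes the clean identity $\min\{\beta,\beta,(c-\ell)r/(1+c-\ell)\}=\beta$.
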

\begin{proof} Let $\beta$ satisfy \eqref{eqSetConstants}. Put $\delta:=\diam\,H(\bx)$ and $r := (1+c-\ell) \beta/(c-\ell)$.  We check  assumptions (A), (B), and (C) of Theorem~\ref{thmApproxL-Gsetset} with $(a,b) := (\beta, \beta)$. Assumption (B) holds true because $\beta + 2r = \beta (3 + 2/(c-\ell)) < a$. Given arbitrary $u \in\ball_{{X}}(\bx, a)$,  $z \in F(u)$, and $t\in (0, r)$, we have 
$\ball_Y(z, c t) \cap\ball_Y(\bz, b) \subset \overline{F(\ball_X(u,t))} \subset \overline{F(\ball_X[u,t])}$; thus $\ball_Y[z, c t] \cap\ball_Y(\bz, b) \subset \overline{F(\ball_X[u,t])}$. Since  $\beta + r = \beta(2+1/(c - \ell)) < a$ and  $c (\beta   +  2r) + \beta + \delta = \beta(3c + 2c/(c - \ell)+1) + \delta < b$, we conclude that (A) is satisfied. To show that (C) holds, fix any $u\in \ball_{{X}}(\bx, \beta +r)$ and $v\in (F+H)(u)$. There is a $w\in H(u)$ such that $v\in F(u)+w$. As $H$ is Hausdorff-Lipschitz on $\ball_{{X}}(\bar{x}, a)$ and $\beta + r  < a$,    there is a $\bar{w}'\in H(\bx)$ such that $\varrho(w,\bar{w}')\leq \ell d(u,\bx)<\ell (\beta + r)$.
	Hence \eqref{eqEstDiam} with $a := \beta$ holds; therefore so does (C). Theorem~\ref{thmApproxL-Gsetset} implies that $F+H$ has property  $(\mathcal{O})$ on $ \ball_{{X}}(\bx, \beta) \times \ball_{Y}(\bz+\bw, \beta)$ for $c - \ell$ and $\gamma \equiv r = (1+c-\ell) \beta/(c - \ell)$.   Noting that $\min\{\beta, \beta, r(c-\ell)/(1+c - \ell)\} = \beta$, Remark~\ref{rmkMetricRegularity} and Proposition~\ref{propApproxRegularitiesEquiv} reveal that $F+H$ has property $(\mathcal{O})$ on $\ball_X(\bx,\beta) \times \ball_Y(\bar{y},\beta)$ for $c - \ell$ and $\gamma \equiv \infty$.
\end{proof}

Instead of the condition on the diameter  one can assume the so-called local sum-stability introduced by M. Durea and R. Strugariu \cite{DS2012}.

\begin{definition} \label{lss} Let $(X,d)$ be a metric space and $(Y,\varrho)$ be a linear metric space. Consider a point $(\bx,\bz, \bw)\in X\times Y\times Y$ and set-valued mappings  $F$, $H: X\rightrightarrows Y$ such that $\bz\in F(\bx)$ and  ${\bw\in H(\bx).}$ We say that the pair \emph{$(F,H)$ is sum-stable around $(\bx,\bz,\bw)$} if
	for each $\xi >0$ there exists a $\beta >0$ such that, for each $u\in \ball_X(\bx, \beta)$ and each $v\in
	(F+H)(u)\cap \ball_Y({\bz+}\bw, \beta)$, there exist $z\in F(u)\cap \ball_Y(\bz, \xi)$ and $w\in H(u)\cap \ball_Y(\bw,\xi)$ such that $v=z+w.$
\end{definition}

Using the above notion, we  obtain an approximate version of \cite[Corollaries 3.7 and 3.15]{CR2023} going back to \cite[Theorem 8]{HNT2014} and generalizing Theorem~\ref{thmApproxL-Gset}, because if a mapping $h: X \to Y$ is single-valued and Lipschitz continuous around (even calm at)  $\bx$, then the pair $(F,h)$ is sum-stable around $(\bx,\bz, h(\bx))$ for any   $F: X\rightrightarrows Y$ with  $\bz\in F(\bx)$. 

\begin{corollary} \label{thmPeturbSetvalued}
	Let $(X,d)$ be a  metric space and $(Y, \varrho)$ be a linear metric space with a~translation-invariant metric.  Consider  a point  $(\bx,\bz, \bw)\in X\times Y\times Y$ and set-valued mappings $F$, $H:X\rightrightarrows Y$ with $\bz\in F(\bx)$ and $\bw\in H(\bx)$. 
	Assume that $\lip H (\bx, \bw) < \infty$ and that the pair $(F,H)$ is sum-stable around $(\bx,\bz, \bw)$. 
	Then  
	\begin{equation} \label{eqApproxSurfFsetSet}
		\overline{\sur}(F+H)(\bx, \bz + \bw) \geq  \overline{\sur}\,F(\bx, \bz) - \lip H (\bx, \bw).
	\end{equation}
\end{corollary}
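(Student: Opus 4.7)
The plan is to deduce the inequality from Theorem~\ref{thmApproxL-Gsetset} by verifying its three structural hypotheses (A), (B), (C) on suitably small neighborhoods. Assume $\overline{\sur}\,F(\bx,\bz) > \lip H(\bx,\bw)$, otherwise the inequality is trivial. Fix arbitrary constants with $\lip H(\bx,\bw) < \ell < c < c' < \overline{\sur}\,F(\bx,\bz)$; it then suffices to show $\overline{\sur}(F+H)(\bx,\bz+\bw) \geq c - \ell$ and to let $c' \uparrow \overline{\sur}\,F(\bx,\bz)$ and $\ell \downarrow \lip H(\bx,\bw)$ at the end.

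By the choice of $c'$ together with Remark~\ref{rmkClosedBalls}, there is a $\gamma_1 > 0$ governing the closed-ball form of property $(\mathcal{O})$ for $F$ at rate $c'$ around $(\bx,\bz)$; by the choice of $\ell$, there is a $\gamma_2 > 0$ on which $H$ has the Aubin property with constant $\ell$ around $(\bx,\bw)$. I fix a $\delta \in (0, \min\{\gamma_1,\gamma_2\})$ and invoke sum-stability of the pair $(F,H)$ with $\xi := \delta$ to extract $\beta > 0$ such that every $(u,v)\in\gph(F+H)$ with $u\in\ball_X(\bx,\beta)$ and $v\in\ball_Y(\bz+\bw,\beta)$ admits a decomposition $v = z + w$ with $z\in F(u)\cap\ball_Y(\bz,\delta)$ and $w\in H(u)\cap\ball_Y(\bw,\delta)$. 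Finally, I choose $a$, $b$, $r > 0$ small enough that $a+2r \leq \min\{\gamma_1,\gamma_2,\beta\}$, that $c(a+2r)+b+\delta \leq \gamma_1$, that $(a+r)\ell+\delta \leq \gamma_2$, and that $b+cr \leq \beta$.

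With these choices, (A) is just the almost openness of $F$ restricted to the prescribed balls, and (B) is the Aubin property of $H$ on the prescribed box. The heart of the argument is (C): for $u \in \ball_X(\bx,a+r)$ and $v \in (F+H)(u)\cap\ball_Y(\bz+\bw,b+cr)$ sum-stability supplies a decomposition $v = z + w$ with $w \in H(u)$ and $\varrho(w,\bw) < \delta \leq (a+r)\ell+\delta$, which is precisely the decomposition demanded by (C). Theorem~\ref{thmApproxL-Gsetset} then yields property $(\mathcal{O})$ for $F+H$ with rate $c-\ell$ on $\ball_X(\bx,a) \times \ball_Y(\bz+\bw,b)$, i.e.\ $\overline{\sur}(F+H)(\bx,\bz+\bw) \geq c-\ell$.

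The main obstacle is the consistent bookkeeping of the parameters $a$, $b$, $r$, $\delta$: they must simultaneously fit inside the openness neighborhood of $F$ and the Aubin neighborhood of $H$, lie inside the sum-stability neighborhood of radius $\beta$, and leave enough room so that the output radius $\xi = \delta$ of sum-stability is dominated by $(a+r)\ell+\delta$. The slack parameter $\delta > 0$ available in Theorem~\ref{thmApproxL-Gsetset}---absent from its single-valued counterpart Theorem~\ref{thmApproxL-Gset}---is exactly what absorbs the extra room needed by the sum-stable decomposition. Once $\delta$ is fixed first, all remaining constraints can be met by shrinking $a$, $b$, $r$ in turn, and the conclusion follows by applying Theorem~\ref{thmApproxL-Gsetset}.
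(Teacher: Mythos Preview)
Your proposal is correct and follows essentially the same route as the paper: reduce to Theorem~\ref{thmApproxL-Gsetset} by fixing $\ell<c<c'$, extracting neighborhoods from almost openness of $F$ and the Aubin property of $H$, using sum-stability to secure the decomposition in (C), and then shrinking $a,b,r$ so that all parameters fit. The paper makes the specific choices $\delta=\beta/2$, $\beta'<\beta/2$, $r=\beta'/(3c+2)$, $a=b=r$, whereas you argue more abstractly that the four inequalities can be met simultaneously once $\delta$ is fixed; both are equally valid. One cosmetic slip: at the end you write ``let $c'\uparrow\overline{\sur}\,F(\bx,\bz)$'' but it is $c$ (not $c'$) that must be sent up, since the output rate is $c-\ell$.
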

\begin{proof} Assume that $\overline{\sur}\,F(\bx, \bz) > \lip H (\bx, \bw)$ because otherwise there is nothing to prove.  Fix constants  $c$  and $\ell$ such that 
	$\lip H (\bx, \bw) < \ell < c < \overline{\sur}\,F(\bx, \bz)$.
 Find a $\beta>0$ such that $F$ has property $(\mathcal{O})$ on  $\ball_{X}(\bx, \beta) \times \ball_{Y}(\bz, \beta)$ for $c$ and $\gamma \equiv \beta$ and that $H$ has Aubin property on $\ball_{X}(\bx, \beta )\times \ball_{{Y}}(\bw, \beta)$ with the constant $\ell$. Let $\delta := \beta/2$. Find a $\beta' \in (0, \beta/2)$ such that  for each $u\in \ball_X(\bx,  \beta' )$ and each $v\in 	(F+H)(u)\cap \ball_Y({\bz+}\bw, \beta')$, there exist $z\in F(u)\cap \ball_Y(\bz, \delta)$ and $w\in H(u)\cap \ball_Y(\bw,\delta)$ such that $v=z+w$. Let $r :=  \beta'/(3c +2)$.

We check  assumptions (A), (B), and (C) of Theorem~\ref{thmApproxL-Gsetset} with $(a,b) := (r, r)$.  Assumption (C) holds true because $r + r < \beta'$, $r + cr < \beta'$,  and  $(r + r)\ell + \delta > \delta$.  Since  $(r + r)\ell + \delta < 2rc + \delta <  \beta' + \delta < \beta$ and $r + 2r <3\beta'/2 < \beta$, assumption (B) is verified.   Given arbitrary $u \in\ball_{{X}}(\bx, \beta)$,  $z \in F(u)$, and $t\in (0, \beta)$, we have 
$\ball_Y(z, c t) \cap\ball_Y(\bz, \beta) \subset \overline{F(\ball_X(u,t))} \subset \overline{F(\ball_X[u,t])}$; thus $\ball_Y[z, c t] \cap\ball_Y(\bz, \beta) \subset \overline{F(\ball_X[u,t])}$.  Since  $r + r <  \beta' < \beta $ and  $c (r   +  2r) + r + \delta  < \beta' + \delta < \beta$, we conclude that (A) is satisfied.

Theorem~\ref{thmApproxL-Gsetset} implies that $\overline{\sur} \, (F+H) (\bx, \bz + \bw) \geq c- \ell$. Letting $c\uparrow \overline{\sur} \, F(\bx, \bz)$ and $\ell \downarrow \lip H(\bx, \bw)$ we finish the proof.
\end{proof}

\section{Linear and Affine Mappings} \label{scLinearMappings}

A topological vector space $X$, that is, a vector space over $\mathbb{R}$ equipped with a~Hausdorff topology such that the mappings $(u,x)\longmapsto u+x$ and $(\lambda,x) \longmapsto  \lambda x$
are continuous on the spaces $X\times X$ and $\mathbb{R}\times X$ with respect to their product topologies, is said to be a
\emph{locally convex space} if every neighborhood of the origin  contains a convex neighborhood of the origin. Every neighborhood of the origin is an absorbing set. A locally convex space $X$ is said to be \emph{barreled} if each closed convex circled absorbing subset of $X$ is a neighborhood of the origin in $X$.  In every locally convex space there is a neighborhood basis of the origin
consisting of closed convex circled (hence symmetric) sets. Every Fr\'echet space, i.e., a complete metrizable locally convex space,  and every Baire space, i.e. countable unions of closed sets with empty interior also have empty interior, is barreled. 

A linear operator $A$ acting between topological vector spaces $X$ and $Y$ is said to be \emph{almost open at $x \in X$} if for each neighborhood $U$ of $x$ in $X$ the set $\overline{A(U)}$ is a~neighborhood of $Ax$ in $Y$. In metrizable topological vector spaces,  $A$ is almost open around each point if and only if $A$ is almost open at each point if and only if $A$ is almost
open at the origin. Moreover, if $A$ acts between normed spaces then 
$$
 \overline{\sur} A = \sup\big\{c > 0: \ \overline{A(\ball_X)} \supset c \ball_Y \big\}. 
$$
The almost openness modulus is a Lipschitz continuous function on the space of bounded linear operators $\mathcal{BL}(X,Y)$ equipped with the operator norm; this is a~quantitative version of \cite[Theorem 1.2]{Harte84}. 
\begin{corollary} \label{corApproxLinear}
Let $(X, \|\cdot\|)$ and $(Y, \|\cdot\|)$ be normed spaces. The function $\mathcal{BL}(X,Y) \ni A \longmapsto \overline{\sur} A$ is Lipschitz continuous with the constant $1$. In particular, the almost open linear operators form an open subset of $\mathcal{BL}(X,Y)$. 
\end{corollary}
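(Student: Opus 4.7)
The plan is a direct application of Theorem~\ref{thmApproxL-Gset} to the perturbation $B = A + (B-A)$. Fix arbitrary $A$, $B \in \mathcal{BL}(X,Y)$ and put $h := B - A \in \mathcal{BL}(X,Y)$. Since $h$ is a bounded linear operator defined on all of $X$, it is globally Lipschitz with constant $\|h\|$, so $\lip h(\bar{x}) \leq \|B - A\|$ at every $\bar{x} \in X$.

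Next, I would remark that for a linear operator the almost-openness modulus is independent of the reference point. Indeed, if $\overline{A(\ball_X)} \supset c\, \ball_Y$, then by linearity $\overline{A(\ball_X(\bar{x}, t))} = A\bar{x} + t\, \overline{A(\ball_X)} \supset \ball_Y(A\bar{x}, ct)$ for every $\bar{x} \in X$ and $t > 0$; combined with the formula for $\overline{\sur} A$ recalled in the paragraph preceding the corollary, this justifies writing simply $\overline{\sur} A$.

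With $\bar{x} := 0$ and $\bar{z} := 0$, Theorem~\ref{thmApproxL-Gset} applied to $F := A$ and $h := B - A$ gives
\begin{equation*}
  \overline{\sur} B \;=\; \overline{\sur}(A + h)(0, h(0)) \;\geq\; \overline{\sur} A(0,0) - \lip h(0) \;\geq\; \overline{\sur} A - \|B - A\|.
\end{equation*}
Swapping the roles of $A$ and $B$ (writing $A = B + (A-B)$ and applying the same theorem) yields the reverse inequality $\overline{\sur} A \geq \overline{\sur} B - \|A - B\|$. Combining both estimates gives $|\overline{\sur} A - \overline{\sur} B| \leq \|A - B\|$, which is the claimed Lipschitz continuity with constant $1$.

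For the ``in particular'' clause: if $A_0 \in \mathcal{BL}(X,Y)$ is almost open, then $\overline{\sur} A_0 > 0$, and for every $B$ with $\|B - A_0\| < \overline{\sur} A_0$ the Lipschitz estimate forces $\overline{\sur} B > 0$, so $B$ is almost open as well; hence the set of almost open operators is open. There is no real obstacle here: the only thing that could go wrong is the base-point independence of $\overline{\sur} A$ for linear $A$, but this is immediate from linearity as sketched above.
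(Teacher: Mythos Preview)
Your proof is correct and follows essentially the same route as the paper: apply Theorem~\ref{thmApproxL-Gset} once to $F:=A$, $h:=B-A$ and once with the roles of $A$ and $B$ swapped, then combine. The paper additionally records that $\overline{\sur} A \leq \|A\| < \infty$ (and similarly for $B$) so that the combined estimate $|\overline{\sur} A - \overline{\sur} B| \leq \|A-B\|$ involves only finite quantities, a minor point you leave implicit.
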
  

\begin{proof}
 Let $A$, $B \in \mathcal{BL}(X,Y)$ be arbitrary. As $\lip(A-B) = \|A - B\|$,  Theorem~\ref{thmApproxL-Gset} implies  that 
$$
 \overline{\sur} B \geq \overline{\sur} A -  \|A-B\|
 \quad \mbox{and} \quad 
 \overline{\sur} A \geq \overline{\sur} B -  \|A-B\|.
$$
Since  $\overline{\sur} A \leq  \|A\| < \infty$ and $\overline{\sur} B \leq  \|B\| < \infty$, a combination of the displayed formulas above yields that 
$$
  -  \|A-B\| \leq \overline{\sur} A  - \overline{\sur} B \leq \overline{\sur} A - \overline{\sur} A  + \|A - B\| = \|A-B\|.
$$
In particular, given an $A \in  \mathcal{BL}(X,Y)$ with $\overline{\sur} A > 0$, we have $\overline{\sur} B > 0$ for each $B \in \mathcal{BL}(X,Y)$ with $\|A - B\| < \overline{\sur} A$. 
\end{proof} 

It is elementary that if an $A \in \mathcal{BL}(X,Y)$ is almost open then  $\overline{A(X)} = Y$.  On the other hand, a sufficient condition for almost openness of a linear bounded mapping is contained in \cite[Theorem 1.3]{Harte84}. 

\begin{proposition} \label{propHarte}
Let $(X, \|\cdot\|)$ and $(Y, \|\cdot\|)$ be normed spaces. Consider  an $A \in \mathcal{BL}(X,Y)$ such that  $\overline{A(X)} = Y$ and that there is an $\alpha > 0$ such that $ \alpha \|x\| \leq \|Ax \|$ for each $x \in X$. Then $\overline{\sur} A \geq \alpha$. 
\end{proposition}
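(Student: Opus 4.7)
The plan is to establish that for every $c \in (0, \alpha)$ we have $c \ball_Y \subset \overline{A(\ball_X)}$; taking the supremum over such $c$ yields $\overline{\sur} A \geq \alpha$. Fix such a $c$, and choose an auxiliary parameter $\theta \in (0,1)$ small enough that $(1+\theta)c < (1-\theta)\alpha$, which is possible precisely because $c/\alpha < 1$. Pick an arbitrary $y \in Y$ with $\|y\| \leq c$; the task is to show that $y$ can be approximated in norm by elements of $A(\ball_X)$.

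I would construct iteratively a sequence $(x_k)_{k \in \mathbb{N}}$ in $X$ by combining the two hypotheses: the density $\overline{A(X)} = Y$ is used to decrease the defect at a fixed geometric rate, while the bound-from-below $\alpha \|x\| \leq \|Ax\|$ is used to keep control over the norms of the $x_k$. Set $y_0 := y$. At step $k$, if $y_{k-1} = 0$, let $x_k := 0$; otherwise, by density pick $x_k \in X$ with $\|Ax_k - y_{k-1}\| < \theta \|y_{k-1}\|$. Set $y_k := y_{k-1} - Ax_k$, so $\|y_k\| < \theta \|y_{k-1}\|$, and hence inductively $\|y_k\| \leq \theta^k \|y_0\|$. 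The triangle inequality gives $\|Ax_k\| \leq (1+\theta)\|y_{k-1}\|$, and then the bound-from-below hypothesis yields $\|x_k\| \leq (1+\theta)\theta^{k-1}\|y_0\|/\alpha$.

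Now consider the telescoping partial sums $s_n := \sum_{k=1}^n x_k$, which give $As_n = y - y_n$ and therefore $\|As_n - y\| \leq \theta^n \|y_0\| \to 0$. Summing the geometric bound on $\|x_k\|$ over all $k \geq 1$ yields
\[
 \|s_n\| \leq \sum_{k=1}^{n} \|x_k\| \leq \frac{(1+\theta)\|y_0\|}{(1-\theta)\alpha} \leq \frac{(1+\theta)c}{(1-\theta)\alpha} < 1
\]
by the choice of $\theta$. Thus $s_n \in \ball_X$ for every $n$ and $As_n \to y$, so $y \in \overline{A(\ball_X)}$, as desired.

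No completeness is required anywhere in the argument, since only the finite partial sums $s_n$ are used and the sequence $(x_k)$ need not be summed in $X$. The only real obstacle is calibrating $\theta$ so that the amplification factor $(1+\theta)/(1-\theta)$, which is paid for using the triangle inequality when passing from $\|y_{k-1}\|$ to $\|Ax_k\|$, does not exhaust the slack $\alpha/c > 1$ available from the bound-from-below assumption; the strict inequality $c < \alpha$ is used precisely to afford such a $\theta$.
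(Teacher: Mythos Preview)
Your argument is correct, but it takes a genuinely different route from the paper's. The paper proves the sharp inclusion $\alpha\ball_Y \subset \overline{A(\ball_X)}$ in one stroke: given a nonzero $y\in\alpha\ball_Y$, it picks any sequence $(x_k)$ with $Ax_k\to y$ (by density), and then \emph{rescales} to $u_k:=\tfrac{\|y\|}{\|Ax_k\|}\,x_k$, so that $\|Au_k\|=\|y\|\le\alpha$ forces $u_k\in\ball_X$ via the bounded-below hypothesis, while the reverse triangle inequality gives $\|y-Au_k\|\le 2\|y-Ax_k\|\to 0$. Your proof instead runs a Banach--Schauder style iteration: you use density to reduce the defect by a fixed factor $\theta$ at each step and the bounded-below estimate to control the increments, then exhibit finite partial sums $s_n\in\ball_X$ with $As_n\to y$. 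The rescaling trick is shorter and yields the optimal radius $\alpha$ directly, with no auxiliary parameter $\theta$ and no final passage $c\uparrow\alpha$; your iterative scheme is heavier but is a robust template that would still work in settings where a simple radial rescaling is unavailable, and it nicely highlights that completeness of $X$ is never invoked.
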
  

\begin{proof}
  We show that $\overline{A(\ball_X)} \supset \alpha \ball_Y$. Let a non-zero $y \in \alpha \ball_Y$ be arbitrary. There is a sequence $(x_k)$ in $X$ such that $\|y - Ax_k\| \to 0$ as $k \to \infty$. Assume without any loss of generality that  $0 < \|Ax_k\| < 2 \alpha$ for each $k \in \mathbb{N}$.  Let 
  $ u_k := \hbox{$\frac{\|y\|}{\|Ax_k\|}$} x_k$, $k \in \mathbb{N}$. For each $k \in \mathbb{N}$, we have $\alpha \|u_k \| \leq \|Au_k\| = \hbox{$\frac{\|y\|}{\|Ax_k\|}$} \|Ax_k\| = \|y\| \leq \alpha$, thus $u_k \in \ball_X$; and   $\|y - Au_k\| \leq \|y - Ax_k\| + \|Ax_k - Au_k \| = \|y - Ax_k\| + \big|\|Ax_k\| - \|y\| \big| \ \leq 2 \|y - Ax_k\| \to 0$ as $k \to \infty$.
\end{proof}

Any $A \in \mathcal{BL}(X,Y)$ satisfying the latter assumption in Proposition~\ref{propHarte} is said to be \emph{bounded below} in \cite{Harte84} and by \cite[Theorem 1.4]{Harte84}: {\it If  $A \in \mathcal{BL}(X,Y)$ is in the topological boundary of the set of all linear almost open mappings then $A$ is not bounded below.}

The common proof of the classical Banach's open mapping theorem, e.g. in Fr\'echet spaces, is divided into two steps. First, the almost openness of the linear surjective
mapping under consideration is established. Second, one shows that the almost openness implies the openness provided that this  mapping is continuous.  
A variant of the open mapping theorem for generally non-metrizable locally convex spaces goes back to Pt\'{a}k \cite{Ptak58}. Let us focus on the first step and formulate a~simple statement:

\begin{proposition} \label{propPtak}
Let $X$ and $Y$ be locally convex spaces and $Y$ be barreled.  Then each linear mapping from $X$ {\tt onto} $Y$ is almost open.
\end{proposition}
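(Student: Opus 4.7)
The plan is to reduce the claim to an application of the definition of a barreled space. Fix a linear surjection $A: X \to Y$. Since $A$ is linear, it suffices to prove almost openness at the origin: given any neighborhood $U$ of $0$ in $X$, we must show that $\overline{A(U)}$ is a neighborhood of $0$ in $Y$.

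First I would use the local convexity of $X$ to select a closed convex circled neighborhood $V$ of $0$ in $X$ contained in $U$; such a $V$ exists by the paragraph preceding the statement. Then I would examine $C := \overline{A(V)}$. The set $C$ is closed by construction. Because $V$ is convex and circled and $A$ is linear, $A(V)$ is convex and circled, and these two properties are preserved under taking the closure in $Y$, so $C$ is convex and circled as well.

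The crux is showing that $C$ is absorbing. Since $V$ is a neighborhood of $0$ in $X$, it is absorbing, so $X = \bigcup_{n \in \mathbb{N}} nV$. Using surjectivity of $A$,
\begin{equation*}
 Y = A(X) = \bigcup_{n \in \mathbb{N}} A(nV) = \bigcup_{n \in \mathbb{N}} n\, A(V) \subset \bigcup_{n \in \mathbb{N}} n\, C.
\end{equation*}
Hence for every $y \in Y$ there is an $n \in \mathbb{N}$ with $y \in nC$, i.e.\ $(1/n) y \in C$; combined with the circledness of $C$, this gives $\lambda y \in C$ for all $|\lambda| \leq 1/n$, so $C$ is absorbing.

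Thus $C = \overline{A(V)}$ is a closed convex circled absorbing subset of $Y$, that is, a barrel. Since $Y$ is barreled, $C$ is a neighborhood of $0$ in $Y$, and consequently $\overline{A(U)} \supset \overline{A(V)} = C$ is a neighborhood of $0$ in $Y$ as well. This proves almost openness of $A$ at the origin, and hence at every point by translation. The argument is essentially classical; the only delicate point is verifying the absorbing property of $\overline{A(V)}$, where both the surjectivity of $A$ and the circledness of the closure are used together.
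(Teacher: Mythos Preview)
Your proof is correct and follows essentially the same route as the paper's: choose a convex circled neighborhood of the origin inside $U$, take the closure of its image, verify it is a barrel (closed, convex, circled, absorbing) using surjectivity of $A$, and invoke the barreledness of $Y$. The only differences are cosmetic---you spell out the absorbing property via $Y=\bigcup_n nC$ and explicitly mention the translation to arbitrary points, whereas the paper handles both in a single line.
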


\begin{proof}
Pick an arbitrary linear mapping $A: X \to Y$ such that $A(X)=Y$. Let $U$ be a neighborhood of $0$ in $X$. There exists a convex circled neighborhood $U'$
of $0$ such that $U'\subset U$. Then $V:=\overline{A(U')}$ is a closed convex
circled set. Being a neighborhood of $0$, the set $U'$ is absorbing in $X$, hence $A(U')$ is absorbing in $A(X)=Y$; and therefore so is $V$. Since $Y$ is barreled, the set $V$ is
a neighborhood of $0$ in $Y$, hence so is $\overline{A(U)}$. 
\end{proof}

Now, we focus on affine mappings. 

\begin{definition}\label{defAffMap}
  Let $X$ and $Y$ be vector spaces.  A mapping $A : X \to Y$ is said to be \emph{affine} 
  if the domain of $A$ is convex and  $ A(\lambda u+(1-\lambda)x)=\lambda A(u)+(1-\lambda)A(x)$ for each $u$, $x\in \dom A$  and each $\lambda\in[0,1]$.
\end{definition}

\noindent A restriction of a linear mapping on a convex subset $K$ of $X$ is affine. On the other hand, any affine mapping $A$ can be extended to an affine mapping $\widetilde{A}$ defined on the whole space $X$ such the restriction of $\widetilde{A}$ to $\dom A$ is $A$ and the mapping $\widetilde{A} - \widetilde{A}(0)$  is linear.
The next result generalizes the first step of the proof of the Banach open
mapping theorem by following a standard pattern which combines the Baire's category theorem \cite[Theorem 5.1.1]{Jarchow} and the \emph{line segment principle} \cite[Proposition 6.2.2]{Jarchow}:  \emph{Let $K$ be an open convex subset of a~Hausdorff topological vector space and let $\lambda \in (0,1]$. Then $\lambda \, K + (1-\lambda) \overline{K} \subset K$.}

\begin{proposition} \label{propAlmostOpen}
  Let $X$ be a locally convex space and $Y$ be a complete metrizable topological vector space.  
  Consider a point $x \in X$, an affine mapping $A: X \to Y$ with   $x \in \dom A$  and $ A(x) \in \core A(X)$, and 
  a sequence $(K_p)$ of  convex subsets of $\dom A$ such that 
  \begin{equation} \label{eqKnK}
    K_\infty := \dom A = \bigcup_{p \in \mathbb{N}} \,  K_p
    \quad \mbox{and} \quad x \in K_p \subset K_{p+1} 
    \quad \mbox{for each} \quad p \in \mathbb{N}.
  \end{equation} 
  Let $\alpha \in (0,1]$ and let $U$ be a neighborhood of $x$ in $x + \alpha(K_\infty-x)$. Then there exists a $q \in \mathbb{N}$ such that  the set 
  $\overline{A\big(U\cap (x+ \alpha (K_{p} - x))\big)}$ is a~neighborhood of $A(x)$ in $Y$ for each $p  \in \{q, q+1, \dots\}$.  If, in addition, for such an index $p$  
  the set $U\cap K_{p}$ 
  is bounded, then for each neighborhood $W$ of $x$ in $x+ \alpha (K_{p} - x)$ the set $\overline{A(W)}$ is a~neighborhood of $A(x)$ in $Y$.\end{proposition}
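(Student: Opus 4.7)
The plan is to combine Baire's category theorem with the line segment principle, adapting the first step of the classical open mapping theorem, and to overcome the lack of symmetry of $x+\alpha(K_p-x)$ by invoking the core assumption twice. The main obstacle will be exactly this: Baire alone will supply an index $p_0$ with $\overline{A(U\cap(x+\alpha(K_{p_0}-x)))}$ having non-empty interior, but the candidate set is not symmetric, so the classical reflection trick cannot directly push the interior point back to $A(x)$.

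After the affine shift $u\mapsto u-x$, $v\mapsto v-A(x)$, I may assume $x=0$ and $A(x)=0$; then the affine extension of $A$ is a linear map $L:X\to Y$. Write $M_p:=\alpha K_p$, so that $M_p$ is convex, contains $0$, nested, and $\bigcup_p M_p=M_\infty$. Choose a convex, circled open neighborhood $V_0$ of $0$ in $X$ with $V_0\cap M_\infty\subset U$. The hypothesis $A(x)\in\core A(X)$ says that $L(K_\infty)$ is absorbing in $Y$, and a short verification (combining that $V_0$ is absorbing with the convexity of $M_\infty$ containing $0$) will show $L(V_0\cap M_\infty)$ is absorbing in $Y$ as well. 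Writing $V_0\cap M_\infty=\bigcup_p V_0\cap M_p$ then gives
\[
 Y \;=\; \bigcup_{n,p\in\mathbb{N}} n\,\overline{L(V_0\cap M_p)},
\]
and Baire's theorem in the complete metrizable $Y$ will yield $p_0$ with $\overline{L(V_0\cap M_{p_0})}$ having non-empty interior. Density of $L(V_0\cap M_{p_0})$ in its closure will let me pick $y^*\in L(V_0\cap M_{p_0})$ inside that interior, together with an open neighborhood $W$ of $0$ in $Y$ such that $y^*+W\subset\overline{L(V_0\cap M_{p_0})}$.

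To overcome the asymmetry I will apply the core assumption a second time, now to $-y^*$, producing $\mu>0$ and $z''\in K_\infty$ with $L(z'')=-\mu y^*$. Choosing $p_2$ with $z''\in K_{p_2}$, a sufficiently small $\nu>0$ will place $\nu\alpha z''$ in $V_0\cap M_{p_2}$ (using that $V_0$ is absorbing and $M_{p_2}$ is convex with $0\in M_{p_2}$), whence $-\nu\alpha\mu y^*\in L(V_0\cap M_{p_2})$. Setting $q:=\max(p_0,p_2)$, for every $p\geq q$ the closed convex set $\overline{L(V_0\cap M_p)}$ contains both $y^*+W$ and $-\nu\alpha\mu y^*$, and the convex combination with weight $\lambda:=\nu\alpha\mu/(1+\nu\alpha\mu)\in(0,1)$ will annihilate the $y^*$ contributions and exhibit the open neighborhood $\lambda W$ of $0$ inside $\overline{L(V_0\cap M_p)}$. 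Undoing the shift then yields the first assertion.

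For the moreover part, I will use that $V_0\cap M_p\subset U\cap\alpha K_p\subset U\cap K_p$, so $V_0\cap M_p$ is bounded in $X$. Given any open neighborhood $W_0$ of $0$ in $X$, some $\epsilon\in(0,1]$ will satisfy $\epsilon(V_0\cap M_p)\subset W_0$, and circledness of $V_0$ combined with the convexity of $M_p$ containing $0$ keeps $\epsilon(V_0\cap M_p)\subset V_0\cap M_p$, so $\epsilon(V_0\cap M_p)\subset W_0\cap M_p$. Scaling the resulting inclusion
\[
 \epsilon\,\overline{L(V_0\cap M_p)} \;=\; \overline{L(\epsilon(V_0\cap M_p))} \;\subset\; \overline{L(W_0\cap M_p)}
\]
will transport the neighborhood-of-$0$ property from the first part to $\overline{L(W_0\cap M_p)}$. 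Since any neighborhood $W$ of $x$ in $x+\alpha(K_p-x)$ contains a set of the form $x+W_0\cap M_p$ for some open $W_0\ni 0$, this will deliver the conclusion about $\overline{A(W)}$. Everything outside the non-symmetric Baire step is routine manipulation with convex absorbing sets.
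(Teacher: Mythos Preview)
Your proof is correct and follows essentially the same route as the paper's: reduce to $x=0$, $A(0)=0$, cover $Y$ by dilates of the images $L(V_0\cap M_p)$, apply Baire's theorem to get an interior point, and then use convexity together with a second application of the core hypothesis (to locate a negative multiple of that interior point in some $L(V_0\cap M_{p_2})$) to push the interior to $0$; the bounded case is handled by the same scaling trick. The only cosmetic differences are that the paper indexes the Baire cover directly by the double family $\{k\,A(V_p)\}_{k,p}$ rather than first observing that $L(V_0\cap M_\infty)$ is absorbing, and it phrases the final convexity step via the line segment principle with $-y/k\in M_q$ instead of your explicit convex combination annihilating $y^*$; these are the same argument.
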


\begin{proof}
   Without any loss of generality assume that $x=0$ and $A(0)=0$. Let $U$ and $\alpha$ be as in the premise. Let $K:= \alpha K_\infty$. Since $0 \in \core A(K_\infty)$ and $A(K) = \alpha A(K_\infty)$, we have $0 \in \core A(K)$.   Find a~convex neighborhood $V$ of $0$ in $X$ such that  $ V \cap K \subset U$. Let $V_p:= V \cap \alpha K_p$, $p \in \mathbb{N}$.
  
  Fix an arbitrary $y\in Y$. As $0\in \mbox{\rm core} \, A(K)$, we have  $\bigcup_{\lambda > 0} \lambda A(K) = Y$.  Find  an $i \in \mathbb{N}$ such that
  $y = iA(x) $ for some $x \in K$.  Since $V$ is a convex neighborhood of $0$ in $X$ and $K$ is the union of convex sets $\alpha K_p$ (all containing $0$), there exist positive integers
  $j$ and $l$ such that $x/j \in V\cap \alpha K_{l} =:V_{l}$. Thus
  $$
    y= iA(x)=ij\left(\hbox{$\frac{1}{j}$} A(x) + \big(1-\hbox{$\frac{1}{j}$}\big) A(0) \right) =ijA\big(\hbox{$\frac{1}{j}$}x\big)\in  ij A(V_{l}).
  $$
  Since $y\in Y$ is arbitrary, we get that 
  \begin{equation} \label{eqUnion}
     \bigcup_{k=1}^\infty\bigcup_{p=1}^\infty k\,A(V_p)=Y, 
     \quad \mbox{where} \quad 
      V_{p}:= V\cap \alpha K_{p}, \ p \in \mathbb{N}. 
  \end{equation}
  Baire's category theorem says that there is a $q \in \mathbb{N}$ such that the set $M_{q}:= \overline{A(V_{q})}$ has an interior point, say $y$. 
  As $V_p\subset V_{p+1}$ for each $p \in \mathbb{N}$, \eqref{eqUnion} reveals that, taking larger $q$ if necessary, 
  we may   assume 
  that $-y \in k M_{q}$ for some $k \in \mathbb{N}$.  The set $V_{q}$ is convex, thus so are $A(V_{q})$ and $M_{q}$.  
  Since $-y/k \in M_{q}$ and $y \in \intt M_q$, 
  the line segment principle says that 
  $$
    0 \in \intt M_q \subset M_{q} = \overline{A(V\cap \alpha K_{q})} 
    =  \overline{A(V\cap K \cap \alpha K_{q})} \subset \overline{A(U\cap \alpha K_{q})}.
  $$
  As $\alpha K_q\subset  \alpha  K_{q+1} \subset \cdots  \subset \alpha K_\infty$, the proof of the first part is finished.
  
  Fix any $p \in \{q,q+1, \dots\}$  such that $U_{p}:= U\cap K_{p}$ is a bounded set. Let $W$ be an arbitrary  neighborhood of $0$ in $\alpha K_{p}$. 
  Find a convex neighborhood $\Omega$ of $0$ in $X$ such that  $\Omega \cap \alpha K_{p} \subset W$. Then there exists a $\lambda>0$ such that
  $U_{p} \subset \lambda \Omega$. As $\Omega$ is convex, we may assume that
  $ \lambda \geq 1$.  Then $\alpha K_p  \subset \lambda \alpha K_{p}$ because $0 \in K_{p}$, thus
  $$
   U \cap \alpha K_p =  U \cap K_p \cap \alpha  K_p  = U_{p} \cap \alpha K_p \subset \lambda \Omega \cap \lambda \alpha K_{p} =\lambda (\Omega\cap \alpha K_{p}).
  $$
  For $\gamma:= 1/\lambda$, the set  $\gamma M_{q}$ is a neighborhood of $0$ in $Y$ and  
  $$
     \gamma M_{q} \subset \gamma  \overline{A(U\cap \alpha K_{p})} \subset \overline{A\left(\gamma (U \cap \alpha K_p)  \right)} \subset \overline{A(\Omega\cap \alpha K_{p})} \subset \overline{A(W)} .
  $$

\end{proof}

\section{Concluding remarks and future work}
\paragraph*{\bf R$_1$}   Assumption  $(\mathcal{A}_3)$ seems to be crucial in order to get full forms of Theorems~\ref{thmEkelandWeak} and \ref{thmEkelandWeakE} as well as of the corresponding corollaries described in Remark~\ref{rmkComentarySC2}. This consideration is left for an interested reader (if any).

\smallskip 
\paragraph*{\bf R$_2$} Definition~\ref{defApproxRegularities} and Proposition~\ref{propApproxRegularitiesEquiv} can be generalized in three basic ways. First, we can allow $\mu$ to be a continuous non-negative strictly increasing
function on $[0,\infty)$ such that $\lim_{t \downarrow 0} \mu(t) = \mu(0) = 0$ and $\lim_{t \to \infty} \mu(t) = \infty$ instead of $\mu(t) := \mu t$. Second, one can consider a non-empty set $W \subset X \times Y$ instead of $U \times V$. These modifications are obvious, cf.  \cite[Definition~2.92 and Theorem~2.95]{IoffeBook} and \cite[Proposition 2.8]{NDH2022}, respectively. Roughly speaking, one adds brackets around the quantities following $\mu$ and works with $\mu^{-1}$ instead of $1/c$ in the first case. In the latter, one replaces  $U$ and $V$ by either the projections of $W$ onto $X$ and $Y$, respectively, defined by
$W_X := \{ x \in X: \ (x,y) \in W \mbox{ for some } y \in Y \}$  and $W_Y := \{y \in Y: \ (x,y) \in W \mbox{ for some } x \in X \}$; or the fibers $W_{\cdot,y} := \{x \in X: \ (x,y) \in W \}$ for $y \in W_Y$ and $W_{x,\cdot} := \{y \in Y: \ (x,y) \in W \}$ for $x \in W_X$, see also Proposition~\ref{propAlmostRegW} below. Third, one can consider directional versions of the properties as in \cite{CR2020, CDPS}. In the case of openness, one uses ``balls'' $\ball^\eta_X(x,r)$, where $\eta$ satisfies $(\mathcal{A}_1)-(\mathcal{A}_3)$, instead of the usual ones; while for the remaining two equivalent properties one employs the so-called minimal time function with respect to a set, see \cite[Definition 1 and Proposition 3]{CDPS}.

\smallskip 
\paragraph*{\bf R$_3$} Similarly to Corollaries~\ref{corAlmostFabianPreiss} and \ref{corAlmostRegSetvalued2}, taking $U\times V$ as $U \times \{\bar{y}\}$ and $\{\bar{x} \} \times V$, respectively, we obtain (obvious) sufficient conditions for almost subregularity and semiregularity at a point. More importantly, we can replace $U \times V$ by a general non-empty set $W \subset X \times Y$ to obtain approximate versions of statements in \cite{CR2023}, e.g. Proposition 2.6 therein. Moreover, using Theorem~\ref{thmApproxSequentialEkeland}, one can merge regularity and almost regularity together, e.g. Proposition~\ref{propFabianPreiss} and Corollary~\ref{corAlmostFabianPreiss}. Such a generalized Proposition~\ref{propAlmostRegSubreg} reads as:
\begin{proposition}\label{propAlmostRegW} Let $(X,d)$ and $(Y,\varrho)$ be metric spaces. Consider a $c> 0$, a~non-empty set $W \subset X \times  Y$, a function $\gamma: X \to [0, \infty]$  being  defined and not identically zero on $W_X$, and a mapping  $g: X \to Y$ defined on the whole $X$.  Assume that for each $y \in W_Y$  there is a $\lambda = \lambda(y) \in (0,1)$ such that for each $u \in X$ satisfying
\eqref{eqAlmostConstraint} for some $x \in W_{\cdot,y}$, 
there is a point  $u'\in X$ 
such that \eqref{eqAlmostBetter} holds true.  Then for each $x \in W_X$,  each $t  \in (0,\gamma(x))$, and each $y \in \ball_Y(g(x),ct) \cap  W_{x,\cdot}$ there is a Cauchy sequence $(u_k)_{k \in \mathbb{N}}$ in  $\ball_X\big[x,\varrho(g(x),y)/c\big]$ such that $\lim_{k  \to \infty} \varrho(g(u_k), y) = 0$. 

In particular, $\ball_Y(g(x),ct) \cap  W_{x,\cdot} \subset \overline{g\big(\ball_X(x,t)\big)} $ for each $x \in W_X$ and each $t  \in (0,\gamma(x))$. If, in addition, $X$ is complete and $g$ is continuous, then $\ball_Y(g(x),ct) \cap  W_{x,\cdot} \subset g\big(\ball_X(x,t)\big)$ for each $x \in W_X$ and each $t  \in (0,\gamma(x))$.
\end{proposition}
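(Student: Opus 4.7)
My plan is to apply Theorem~\ref{thmApproxSequentialEkeland} directly with $\varphi := \varrho(g(\cdot),y)$ and $\eta := c\,d$, starting at the fixed point $x$, rather than just invoking Corollary~\ref{corApproxEkeland} as in the proof of Proposition~\ref{propAlmostRegSubreg}. This will let me extract a whole sequence with the required Cauchy and vanishing-residual properties simultaneously. Fix $x \in W_X$, $t \in (0,\gamma(x))$, and $y \in \ball_Y(g(x),ct) \cap W_{x,\cdot}$. If $\varphi(x)=0$ the constant sequence $u_k := x$ does everything, so assume $0 < \varphi(x) < ct$. Theorem~\ref{thmApproxSequentialEkeland} yields an $N \in \mathbb{N}\cup\{\infty\}$ and points $(u_k)_{k=1}^N$ with $u_1 = x$ satisfying (i) and (ii). Item (i) with $k := 1$ gives $c\,d(u_j,x) < \varphi(x) - \varphi(u_j) \leq \varrho(g(x),y)$, so each $u_j$ lies in $\ball_X[x,\varrho(g(x),y)/c] \subset \ball_X(x,t)$; item (i) also delivers $c\,d(u_j,u_k) < \varphi(u_k) - \varphi(u_j)$ for $1 \leq k < j \leq N$, from which $(u_k)$ is Cauchy because the bounded strictly decreasing sequence $(\varphi(u_k))$ is Cauchy.

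The core of the argument is to show $\varphi(u_k) \to 0$. Consider first the case $N = \infty$ and assume for contradiction that $\alpha := \lim_k \varphi(u_k) > 0$. Set $\varepsilon := \alpha/2$ and let $\lambda \in (0,1]$ be the constant provided by the hypothesis of the proposition for this $\varepsilon$. Applying conclusion (ii) with the parameter $\lambda\varepsilon \in (0,\varphi(x))$ yields an index $n$ such that
\[
 \varphi(u') + c\,d(u',u_n) > \varphi(u_n) - \lambda\varepsilon \quad \text{for every } u' \in X.
\]
On the other hand, taking $x$ as the auxiliary point in \eqref{eqAlmostConstraint} (which is legitimate because $x \in W_{\cdot,y}$), the pair $(u_n,y)$ meets $\varepsilon < \varphi(u_n) \leq \varrho(g(x),y) - c\,d(u_n,x)$ thanks to $\varphi(u_n) \geq \alpha$ and (i), while $\varrho(g(x),y) < ct < c\gamma(x)$. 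Hence the hypothesis supplies some $u' \in X$ with $\varphi(u') + c\,d(u',u_n) \leq \varphi(u_n) - \lambda\varepsilon$, contradicting the previous display. In the case $N < \infty$, the stopping criterion in the construction of Theorem~\ref{thmApproxSequentialEkeland} reads $\varphi(u') + c\,d(u',u_N) \geq \varphi(u_N)$ for every $u' \in X$; rerunning the same assumption-based argument with $\varepsilon := \varphi(u_N)/2$ (the constraint \eqref{eqAlmostConstraint} is still checked with $x$ as auxiliary point, trivially when $N = 1$ and via (i) otherwise) forces $\varphi(u_N) = 0$, and extending by $u_k := u_N$ for $k > N$ produces an infinite Cauchy sequence in $\ball_X[x,\varrho(g(x),y)/c]$ with $\varphi(u_k) \to 0$.

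The two remaining assertions are then immediate. Since $g(u_k) \to y$ with $u_k \in \ball_X(x,t)$, we have $y \in \overline{g(\ball_X(x,t))}$; the arbitrariness of $y$ gives the first inclusion. If $X$ is complete and $g$ is continuous, the Cauchy sequence $(u_k)$ converges to some $u^* \in \ball_X[x,\varrho(g(x),y)/c] \subset \ball_X(x,t)$ and $g(u^*) = \lim_k g(u_k) = y$, whence the second inclusion. I expect the main obstacle to be the bookkeeping in the middle step: one must keep $x$ fixed as the auxiliary point in \eqref{eqAlmostConstraint} throughout the contradiction argument (so that $(x,y) \in W$ is used repeatedly), and must separately verify the degenerate case $N < \infty$ (in particular $N = 1$, where (i) is vacuous) by invoking the stronger stopping property from the proof of Theorem~\ref{thmApproxSequentialEkeland} rather than (ii).
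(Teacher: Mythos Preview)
Your argument is correct and follows the same strategy as the paper: apply Theorem~\ref{thmApproxSequentialEkeland} with $\varphi=\varrho(g(\cdot),y)$ and $\eta=c\,d$ at the fixed $x$, use item~(i) for the ball membership and the Cauchy property, and combine item~(ii) with the hypothesis (keeping $x$ as the auxiliary point in \eqref{eqAlmostConstraint}) to force the residuals to zero. The only organizational difference is that the paper extends the finite sequence by $u_k:=u_N$ for $k>N$ \emph{before} the contradiction argument and then runs a single case using (ii), whereas you split into $N=\infty$ and $N<\infty$ and in the latter reach for the stopping criterion from the proof of Theorem~\ref{thmApproxSequentialEkeland}; note that this criterion already follows from the statement of (ii) at $k=N$ by letting $\varepsilon\downarrow 0$, so no appeal to the proof is needed, and the paper's uniform treatment is slightly cleaner.
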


\begin{proof}
Let $x \in W_X$,  $t  \in (0,\gamma(x))$, and  $y \in \ball_Y(g(x),ct) \cap  W_{x,\cdot} \subset W_Y$  be arbitrary.  

If $y = g(x)$, then put $u_k:=x$, $k \in \mathbb{N}$, and we are done. Assume that this is not the case.  Let $\varphi:=\varrho (g(\cdot), y)$ and $\eta := c d$. Apply Theorem~\ref{thmApproxSequentialEkeland}, to find an $N \in \mathbb{N} \cup \{\infty\}$ and $(u_k)_{k = 1}^{N}$, with $u_1=x$, satisfying (i) and (ii) therein. 
If $N < \infty$, then put $u_k:= u_N$, $k \in \{N+1, N+2, \dots\}$. Let $\beta_k := \varrho(g(u_k),y)$, $k \in \mathbb{N}$. By (i), we get that 
\begin{equation} \label{eqCauchy}
 (0 \leq) \quad   c\, d(u_j,u_k) \leq \beta_k - \beta_j 
 \quad \mbox{for each} \quad j, k \in \mathbb{N} \quad \mbox{with} \quad k \leq j. 
\end{equation}
As $u_1 =x$,  the sequence $(u_k)_{k \in \mathbb{N}}$ lies in  $\ball_X\big[x,\varrho(g(x),y)/c\big]$ because $\beta_1 = \varrho(g(x),y)$. Also the sequence $(\beta_k)_{k \in \mathbb{N}}$ is decreasing (and bounded from below); hence it converges, to a $\beta \in \big[0,\varrho(g(x),y)\big]$. Employing \eqref{eqCauchy} again, we infer that $(u_k)_{k \in \mathbb{N}}$ is a Cauchy sequence because $(\beta_k)_{k \in \mathbb{N}}$ is such.  Assume that $\beta > 0$.  Find a $\lambda \in (0,1)$ such that the assumption containing \eqref{eqAlmostConstraint}  and \eqref{eqAlmostBetter} holds true. By (ii) in Theorem~\ref{thmApproxSequentialEkeland} with $\varepsilon := (1-\lambda) \beta$, there is an index $j \in \mathbb{N}$ such that 
\begin{equation} \label{eqApproxEVP02uB}
	  \varrho(g(u'),y) +  c \, d(u',u_j) > \varrho(g(u_j),y) -  (1-\lambda) \beta \quad \mbox{for each} \quad u' \in X. 
\end{equation}
Since $(\beta_k)_{k \in \mathbb{N}}$ is decreasing,  \eqref{eqCauchy}, with $k = 1$, implies that  
$$
  \beta \leq \beta_j =  \varrho(g(u_j),y) \leq  \varrho(g(x),y)  -  c \, d(u_j,x).  
$$
As $\varrho(g(x),y) <  ct < c \gamma(x)$ and $x \in W_{\cdot,y}$ because $(x, y) \in W$, there is a  $u' \in X$ such that 
\begin{equation} \label{eqAlmostBetterK}
  \varrho(g(u'),y) + c\,d(u_j,u') \leq \lambda \, \varrho(g(u_j),y).
\end{equation}
Since  $\beta \leq  \varrho(g(u_j),y)$, we get that  	
    \begin{eqnarray*}
       c\,d(u_j,u') & \overset{\eqref{eqAlmostBetterK}}{\leq} &  \lambda \varrho(g(u_j),y) - \varrho(g(u'),y)  \leq   \varrho(g(u_j),y) - (1-\lambda) \beta - \varrho(g(u'),y) \\
       &  \overset{\eqref{eqApproxEVP02uB}}{<} &  
           \varrho(g(u'),y) +  c \, d(u',u_j)  - \varrho(g(u'),y)   =  c\,d(u',u_j), 
     \end{eqnarray*}
  a~contradiction. Hence $\beta =0$, which proves the first conclusion.  As $\varrho(g(x),y) < ct$,  the (whole) sequence $(u_k)_{k \in \mathbb{N}}$ lies in  $\ball_X(x,t)$. Consequently, $y \in \overline{g\big(\ball_X(x,t)\big)}$.   
  
  Assume, in addition, that $X$ is complete and $g$ is continuous. Then  $(u_k)_{k \in \mathbb{N}}$ converges, to a  $u \in \ball_X\big[x,\varrho(g(x),y)/c\big] \subset \ball_X(x,t)$ and 
  $ 0 =  \lim_{k  \to \infty} \varrho(g(u_k), y)  =  \varrho(g(u),y)$. Consequently, $y \in g\big(\ball_X(x,t)\big)$.   
 \end{proof}

Obviously, one can also obtain versions of  \cite[Proposition 2.7 and Corollary 2.8]{CR2023}. Finally, as described in {\bf R$_2$} one can consider directional versions of the properties to get approximate versions of the Ioffe-type criteria in \cite{CR2020, CDPS}.
  
\smallskip 
\paragraph*{\bf R$_4$} Employing \emph{equivalent} set-valued version of Proposition~\ref{propAlmostRegW}, Theorem~\ref{thmApproxL-Gset} can be generalized to cover \cite[Theorem 3.8]{CR2023}, implying \cite[Theorems 4.2 and 4.3]{NDH2022}, as well as an approximate version of it in incomplete spaces for regularity on a fixed set $W \subset X \times Y$. When the paper was ready for submission we learned about \cite[Theorems 5.1(a) and 5.2(a)]{N2023}, being precisely statements of this type for Milyutin regularity. By our approach, we get almost regularity of $F + h$ under the (weaker) assumption that $F$ is Milyutin \emph{almost} regular on $U \times V$ instead of Milyutin  regular on this set. 

Second, assumption (B) in Theorem~\ref{thmApproxL-Gsetset} can be augmented as 
\begin{itemize}
  		\item[$(\widetilde{B})$]  $H(u) \cap \ball_Y(\bar{w}, (a + r) \ell+\delta) \subset \overline{H(u') + \ball_Y[0, \ell \, d(u,u')]}$  for each $u$, $u' \in \ball_{{X}}(\bar{x}, a + 2r)$.
\end{itemize}
Indeed, in the proof, one finds $z' \in F(u')$ such that $\varrho(y-w,z') < (c - c')t/4$  and then, using   $(\widetilde{B})$, a $w' \in H(u')$ such that $\varrho(w,w') \leq \ell \,  d(u, u') + (c - c')t/4$. 

Finally, one can investigate directional versions as well as the almost regularity of compositions similarly to \cite{CDPS}.   

\medskip
\paragraph*{\bf Acknowledgement} The author is grateful to Abderrahim Hantoute, University of Alicante, and to Tom\'a\v{s} Roubal, The Institute of Information Theory and Automation (UTIA) in Prague, for a detailed discussion on numerous preliminary versions of the manuscript, insightful suggestions improving the presentation, and last but not least for their support and friendship. Proposition~\ref{propAlmostOpen} originates from an unpublished joint work with Ji\v{r}\'{i} Reif, who was the author's Ph.D. supervisor, initiated his interest in open mapping theorems, and unfortunately passed away before the corresponding thesis was defended.

 \end{document}